\newtheorem{teo}{Teorema}[section]
\newtheorem{prop}[teo]{Proposition}
\newtheorem{lema}[teo]{Lemma}
\newtheorem{cor}[teo]{Corolary}
\theoremstyle{definition}
\newtheorem{defn}[teo]{Definition}
\newtheorem{obs}[teo]{Remark}
\newtheorem{ex}[teo]{Example}
\newcommand{\vu}{\vspace{.1cm}}
\begin{document}
	\allowdisplaybreaks \hyphenation{Ma-te-ma-ti-ca}

\title{Partial Actions of Weak Hopf Algebras on Coalgebras}

\author[Campos]{Eneilson Campos}
\address[Campos]{Universidade Federal do Rio Grande, Brazil}
\email{eneilsonfontes@furg.br}

\author[Fonseca]{Graziela Fonseca}
\address[Fonseca]{Universidade Federal do Rio Grande do Sul, Brazil}
\email{grazielalangone@gmail.com}

\author[Martini]{Grasiela Martini}
\address[Martini]{Universidade Federal do Rio Grande, Brazil}
\email{grasiela.martini@furg.br}

\begin{abstract}
In this work the notions of partial action of a weak Hopf algebra on a coalgebra and partial action of a groupoid on a coalgebra will be introduced, just as some important properties. An equivalence between these notions will be presented. Finally, a dual relation between the structures of partial action on a coalgebra and partial action on an algebra will be established, as well as a globalization theorem for partial module coalgebras will be presented.
\end{abstract}

\thanks{{\bf MSC 2010:} primary 16T99; secondary 20L05}

\thanks{{\bf Key words and phrases:} Weak Hopf algebras, globalization, dualization, partial module coalgebra,  partial groupoid action.}

\thanks{The second author was partially supported by CNPq, Brazil}

\maketitle

\section{Introduction}

The notion of partial actions of a group on a set appeared for the first time in \cite{Exelp} and was introduced by R. Exel. The purpose in such a  work was to describe $C^*$-algebras having as group the automorphisms of the unitary circle $S_1$. Afterwards, M. Dokuchaev and R. Exel defined, in \cite{Dokuchaev}, the partial action of a group on an algebra, bringing the partial actions into a purely algebraic context. From there, many results were obtained, one of them is the development of a Galois theory for partial actions of groups on rings by M. Dokuchaev, M. Ferrero and A. Paques, \cite{michamiguelpaques}. This work motivated the definition of partial actions of Hopf algebras presented by S. Caenepeel and K. Janssen in \cite{CaenJanssen}.

In \cite{Dokuchaev}, it was investigated under what conditions a partial group action has a globalization (also known as an enveloping action). Generalizing this notion, M. Alves and E. Batista  have constructed in \cite{Ab} a globalization for a partial action of a Hopf algebra on an algebra. These results became known as Globalization Theorems.

The actions theory of weak bialgebra started with G. Böhm, F. Nill and K. Szlachányi  in \cite{Bohminicio} and has been developed in works such as \cite{Caenepeel} and \cite{Wang}. Meantime, D. Bagio and A. Paques introduced in \cite{Bagio} the definition of partial groupoid action on an algebra extending the notion of partial group action. In such a work some properties presented in \cite{Dokuchaev} were also generalized.

Later in \cite{Felipeweak} the concept of partial action of a weak Hopf algebra on an algebra was introduced and the results of Hopf algebras were verified, analogously, for this new context. For example, an equivalence was established between a partial action of a groupoid $\mathcal{G}$ on a unitary algebra $A$ and the partial action of the groupoid algebra $\Bbbk \mathcal{G}$ on $A$.

A natural question that also arose was to know what happens when the partial action is no longer of a Hopf algebra on an algebra but on a coalgebra. Then, in  \cite{Glauberglobalizations} it was  developed a new theory of partial Hopf action for the context of coalgebras. Inspired by the constructions given in \cite{Batista} they obtained, among other results, new globalization theorems and constructed a partial smash coproduct generated by partial coactions of Hopf algebras on coalgebras. In this way, such a study stimulate us to go ahead and construct a theory for partial actions of weak Hopf algebras on coalgebras.

\vu

The main purpose of this work is to introduce the notion of partial action of a weak Hopf algebra on a coalgebra and to investigate how behave in this more general context the results from the classical Hopf theory.

\vu

In the first section of this paper the theoretical basis necessary for the understanding of this text is presented. All properties and results presented in this part already exist in the literature, hence the respective demonstrations will be omitted. However, the references are duly presented for further study of the subject.

\vu

In the first part of the second section the definition of a (global) action of a weak bialgebra on a coalgebra is recalled and the definitions of global action and partial action of a weak Hopf algebra on coalgebras are presented, as well as its respective properties. This section also shows a family of examples of partial actions of a weak Hopf algebra on a coalgebra. In addition, it is investigated what conditions are necessary and sufficient for a partial module coalgebra to be obtained by projection from a (global) module coalgebra.

\vu
In the third section the notion of a partial groupoid action on a coalgebra is presented as a  generalization of the notion of a partial group action on a coalgebra of E. Batista and J. Vercruysse defined in \cite{Batista}. Besides that, some important properties for the development of the theory of partial groupoid actions on coalgebras are highlighted. At the end of this section, an equivalence between the partial action of a groupoid $\mathcal{G}$ on a coalgebra $C$ and the partial action of the groupoid algebra  $\Bbbk \mathcal{G}$ on $C$ is constructed.

Section 4 of this paper is dedicated to the dualizations, that is, the dual correspondences between the respective structures of partial module algebra and partial module coalgebra are investigated. As we will see later, these results will be essential to  get the globalization theorem that will be presented in the last section.

\vu

Finally, the last section is composed of two parts. In the first part a correspondence is established between the globalization of a partial module coalgebra and the globalization of a partial module algebra via dualizations. In the second part, the necessary conditions for a partial module coalgebra to be globalizable will be investigated, extending the idea presented in \cite{Glauberglobalizations}.

\vu

Throughout, vector spaces and (co)algebras will be all considered over a fixed field $\Bbbk$. The symbol $\otimes$ will always mean $\otimes_{\Bbbk}$. $H$ will always denote a weak Hopf algebra, unless some additional conditions are assumed, and $C$ a coalgebra.

\section{Preliminaries}
\subsection{Weak Hopf Algebras}\label{wpaction} \quad In this section we will recall the notion of weak Hopf algebra from the more general concept of weak bialgebra. We will also present several important properties of these structures which will be fundamental for the development of the theory of partial actions of weak Hopf algebras on coalgebras. For more details we refer \cite{Bohminicio}, \cite{Bohmexemplo} and \cite{Bohm}.

\vu

A bialgebra $(H, m, u, \Delta, \varepsilon)$ is a
struture in which the counit $ \varepsilon $ and the coproduct $ \Delta $ are multiplicative and preserve the unit. Now, we will work with a notion similar to this one, although somewhat more generic, in the sense that it will not require all of these conditions for $ \Delta $ and $ \varepsilon $. Thus, a vector space $ H $ is said to be a \textit {weak bialgebra} if there are linear maps
$$m:H\otimes H \rightarrow H, \ \ \ \ u:\Bbbk \rightarrow H$$
and
$$\Delta:H \rightarrow H \otimes H, \ \ \ \ \varepsilon :H \rightarrow \Bbbk$$
such that $ (H, m, u) $ is an algebra, $(H,\Delta,\varepsilon)$ is a coalgebra and, in addition, the following conditions are satisfied:
\begin{enumerate}
	\item[(i)] $\Delta(hk) = \Delta(h)\Delta(k)$;
	\item[(ii)] $\varepsilon(hk\ell) = \varepsilon(hk_1)\varepsilon(k_2\ell) = \varepsilon(hk_2)\varepsilon(k_1\ell)$;
	\item[(iii)] $(1_H \otimes \Delta(1_H))(\Delta(1_H) \otimes 1_H) = (\Delta(1_H) \otimes 1_H)(1_H \otimes \Delta(1_H)) = \Delta^2(1_H)$.
\end{enumerate}
Furthermore, it will be used the notation of Sweedler \cite {Sweedler}, for $\Delta $, that is,
$$\Delta (h) = \displaystyle \sum_ {h} h_1 \otimes h_2, $$
or simply, $\Delta (h) = h_1 \otimes h_2, $ where the summation is implied. With the notation presented, the item (iii) can be rewritten as
$$1_{1'} \otimes 1_1 1_{2'} \otimes 1_2 = 1_1 \otimes 1_2 1_{1'} \otimes 1_{2'} = 1_1 \otimes 1_2 \otimes 1_3.$$
Therefore, since $\Delta$ is multiplicative and $\Delta(h) = \Delta (h1_H) = \Delta(1_Hh)$, then
\begin{eqnarray}
h_1 \otimes h_2 = h_11_1 \otimes h_21_2
= 1_1h_1 \otimes 1_2h_2. \label{4.2}
\end{eqnarray}

Given the above, $ \varepsilon $ can be used to define the following maps which are also linear:
\begin{eqnarray*}
	\varepsilon_t: H &\rightarrow& H \ \ \ \ \ \ \  \ \ \ \ \mbox{ and} \ \  \ \ 	\varepsilon_s: H \rightarrow H \\
	h &\mapsto& \varepsilon(1_1h)1_2 \ \ \ \ \ \ \ \ \ \ \ \  \ \ \ \  \ \ \	h \mapsto 1_1\varepsilon(h1_2) .
\end{eqnarray*}

These maps are said, respectively,  \textit{target map} and \textit{source map}, which give rise to the notation used. From the above, the vector spaces $H_t= \varepsilon_t(H)$ and $H_s= \varepsilon_s(H)$ can de defined.

\begin{prop}
If $H$ is a weak bialgebra and $h, k \in H$, then the following properties hold:
\begin{eqnarray}
\varepsilon_t(\varepsilon_t(h)) &=& \varepsilon_t(h)  \label{4.3}\\
\varepsilon_s(\varepsilon_s(h)) &=& \varepsilon_s(h)  \label{4.4} \\
\varepsilon(h\varepsilon_t(k)) &=& \varepsilon(hk)  \label{4.5} \\
\varepsilon(\varepsilon_s(h)k) &=& \varepsilon(hk)  \label{4.6}\\
\Delta(1_H) & \in & H_s \otimes H_t \label{4.7}\\
\varepsilon_t(h\varepsilon_t(k)) &=& \varepsilon_t(hk)  \label{4.8}\\
\varepsilon_s(\varepsilon_s(h)k) &=& \varepsilon_s(hk). \label{4.9}
\end{eqnarray}
\end{prop}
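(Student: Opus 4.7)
The plan is to prove the seven identities in a dependency-respecting order: \eqref{4.5} and \eqref{4.6} first from axiom (ii), then \eqref{4.3}, \eqref{4.4}, \eqref{4.8}, \eqref{4.9} as one-line corollaries, and finally the structural claim \eqref{4.7} from axiom (iii).

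For \eqref{4.5}, expanding the definition of $\varepsilon_t$ and pulling the scalar $\varepsilon(1_1 k)$ out of the outer $\varepsilon$ yields $\varepsilon(h\varepsilon_t(k)) = \varepsilon(1_1 k)\varepsilon(h 1_2)$. Axiom (ii) with $1_H$ inserted as the middle factor gives $\varepsilon(hk) = \varepsilon(h\, 1_H\, k) = \varepsilon(h 1_2)\varepsilon(1_1 k)$, matching the previous expression. A symmetric computation proves \eqref{4.6}. Given \eqref{4.5}, identities \eqref{4.3} and \eqref{4.8} reduce to one-liners,
\[
\varepsilon_t(\varepsilon_t(h)) = \varepsilon(1_1 \varepsilon_t(h))\,1_2 = \varepsilon(1_1 h)\,1_2 = \varepsilon_t(h),
\]
\[
\varepsilon_t(h\varepsilon_t(k)) = \varepsilon(1_1 h \varepsilon_t(k))\,1_2 = \varepsilon(1_1 hk)\,1_2 = \varepsilon_t(hk),
\]
and \eqref{4.4}, \eqref{4.9} follow analogously from \eqref{4.6}.

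The only genuine obstacle is \eqref{4.7}. The approach I would take is to prove the two inclusions $\Delta(1_H) \in H_s \otimes H$ and $\Delta(1_H) \in H \otimes H_t$ separately, then invoke the standard vector-space identity $(H_s \otimes H) \cap (H \otimes H_t) = H_s \otimes H_t$. Applying $\mathrm{id} \otimes \varepsilon \otimes \mathrm{id}$ to axiom (iii) in the form
\[
1_{1'} \otimes 1_1 1_{2'} \otimes 1_2 \;=\; \Delta^2(1_H),
\]
and using the counit axiom on the right (which collapses $\Delta^2(1_H)$ to $\Delta(1_H)$), produces the single identity
\[
\varepsilon(1_1 1_{2'})\,(1_{1'} \otimes 1_2) \;=\; 1_1 \otimes 1_2.
\]
Read as written, the left-hand side is exactly $\varepsilon_s(1_1) \otimes 1_2$, giving $\Delta(1_H) \in H_s \otimes H$. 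Interchanging the roles of the two independent copies of $\Delta(1_H)$ (swapping primed and unprimed indices) rewrites the same identity as $1_1 \otimes \varepsilon(1_{1'} 1_2)\, 1_{2'} = 1_1 \otimes 1_2$, i.e., $1_1 \otimes \varepsilon_t(1_2) = 1_1 \otimes 1_2$, yielding $\Delta(1_H) \in H \otimes H_t$.

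The main subtlety throughout is Sweedler bookkeeping: every occurrence of $\varepsilon_t$, $\varepsilon_s$, or $\Delta(1_H)$ introduces its own independent copy of $\Delta(1_H)$, and the proof of \eqref{4.7} hinges on recognizing that two such copies may be freely interchanged, so that a single application of axiom (iii) simultaneously supplies both inclusions.
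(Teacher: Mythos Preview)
The paper does not actually prove this proposition: it appears in the Preliminaries section, where the authors explicitly state that all results ``already exist in the literature, hence the respective demonstrations will be omitted,'' citing \cite{Bohminicio}, \cite{Bohmexemplo}, \cite{Bohm}. There is therefore no proof in the paper to compare against.

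That said, your argument is correct and is essentially the standard one found in the references. The derivations of \eqref{4.5} and \eqref{4.6} from the weak multiplicativity of $\varepsilon$ (axiom (ii) with $1_H$ inserted as the middle factor) are exactly right, and \eqref{4.3}, \eqref{4.4}, \eqref{4.8}, \eqref{4.9} do reduce to one-line corollaries as you say. For \eqref{4.7}, your application of $\mathrm{id}\otimes\varepsilon\otimes\mathrm{id}$ to axiom (iii) correctly yields $\varepsilon_s(1_1)\otimes 1_2 = 1_1\otimes 1_2$ and, by the symmetry you describe, $1_1\otimes\varepsilon_t(1_2)=1_1\otimes 1_2$; the intersection identity $(H_s\otimes H)\cap(H\otimes H_t)=H_s\otimes H_t$ holds over a field and finishes the argument. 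Your remark about Sweedler bookkeeping is apt and is indeed the only place where a reader unfamiliar with the conventions might stumble.
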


By (\ref{4.7}), $ H_t $ and $ H_s $ are isomorphic as vector spaces whenever the dimension of H is finite. On the other hand, by (\ref{4.8}) and (\ref{4.9}), it is possible to see that $\Delta(H_t) \subseteq H \otimes H_t$ and $\Delta(H_s) \subseteq H_s \otimes H$. And, more specifically,
\begin{eqnarray}
\Delta(h) &=& 1_1h \otimes 1_2, \mbox{ for all $h \in H_t$}  \label{4.10}\\
\Delta(h) &=& 1_1 \otimes h1_2,  \mbox{ for all $h \in H_s.$}\label{4.11}
\end{eqnarray}

\begin{prop}
If $H$ is a weak bialgebra, then the following properties hold, for all $h, k \in H$:
\begin{eqnarray}
h_1 \otimes \varepsilon_t(h_2) &=& 1_1h \otimes 1_2  \label{4.12}\\
\varepsilon_s(h_1) \otimes h_2 &=& 1_1 \otimes h1_2 \label{4.13} \\
h\varepsilon_t(k) &=& \varepsilon(h_1k)h_2  \label{4.14} \\
\varepsilon_s(h)k &=& k_1\varepsilon(hk_2)  \label{4.15}\\
1_{1'} \otimes \varepsilon_t(1_{2'}) \otimes 1_{3'}&=& 1_11_{1'} \otimes 1_2 \otimes 1_{2'} \label{4.17}\\
1_1 \otimes \varepsilon_s(1_2) \otimes 1_3&=& 1_1 \otimes 1_{1'} \otimes 1_21_{2'} \label{4.18}\\
\varepsilon_t(\varepsilon_t(h)k) &=& \varepsilon_t(h)\varepsilon_t(k)  \label{4.19} \\
\varepsilon_s(h\varepsilon_s(k)) &=& \varepsilon_s(h)\varepsilon_s(k).  \label{4.20}
\end{eqnarray}
\end{prop}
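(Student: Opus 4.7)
My plan is to verify the eight identities by Sweedler-index calculations, combining axioms (i)-(iii) with the already-established identities (4.3)-(4.11). The natural sequence of proof is: first (4.14) and (4.15), then (4.12) and (4.13), then (4.17) and (4.18), and finally (4.19) and (4.20), which fall out almost immediately from the previous steps.

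To prove (4.14), $h\varepsilon_t(k) = \varepsilon(h_1 k) h_2$, I would expand $\varepsilon_t(k) = \varepsilon(1_1 k) 1_2$ on the left and use axiom (ii) in the form $\varepsilon(abc) = \varepsilon(ab_1)\varepsilon(b_2 c)$ combined with (4.2) to migrate the $\Delta(1_H)$ onto $\Delta(h)$; cancellation of $\varepsilon(\cdots)$ against a coproduct factor then yields the desired form. Identity (4.15) is proved by the dual calculation using the second equality in axiom (ii). With (4.14) and (4.15) in hand, identities (4.12) and (4.13) follow by substitution: for (4.12), compute $(id \otimes \varepsilon_t)\Delta(h) = h_1 \otimes \varepsilon(1_1 h_2) 1_2$ and rewrite it to match $1_1 h \otimes 1_2$, using axiom (ii) to recognize that $\Delta(1_H)(h \otimes 1_H) = 1_1 h \otimes 1_2$.

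Identities (4.17) and (4.18) are obtained by combining (4.12) and (4.13) with axiom (iii): applying (4.12) to $h = 1_{2'}$ and using (4.10) (since $1_{2'} \in H_t$) together with axiom (iii) gives (4.17) after tensoring with the outer factor $1_{1'}$; identity (4.18) is dual. Finally, (4.19) and (4.20) follow quickly from (4.14) and (4.15): for (4.19), apply (4.14) with $h$ replaced by $\varepsilon_t(h)$; by (4.10), $\Delta(\varepsilon_t(h)) = 1_1 \varepsilon_t(h) \otimes 1_2$, so the right-hand side of (4.14) becomes $\varepsilon(1_1 \varepsilon_t(h) k) 1_2 = \varepsilon_t(\varepsilon_t(h) k)$, giving exactly $\varepsilon_t(h) \varepsilon_t(k) = \varepsilon_t(\varepsilon_t(h) k)$. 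The argument for (4.20) is dual, using (4.15) and (4.11).

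The main obstacle will be the proof of (4.14) and (4.15) themselves, which require a careful choice of how to insert and contract $\Delta(1_H)$ using axiom (ii); once these adjunction identities are established, the remaining identities follow mechanically by substitution or by application of the already-proved identities on specific elements.
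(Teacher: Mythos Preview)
The paper does not prove this proposition: it is placed in the Preliminaries section, and the authors state explicitly that ``all properties and results presented in this part already exist in the literature, hence the respective demonstrations will be omitted,'' referring the reader to \cite{Bohminicio}, \cite{Bohmexemplo}, \cite{Bohm}. So there is no paper proof to compare against.

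Your plan is the standard one and works. Two remarks on execution. First, the cleanest route to (4.14) is not a bare axiom-(ii) manipulation but rather: use (4.5) to write $\varepsilon(h_1k)h_2=\varepsilon(h_1\varepsilon_t(k))h_2$, then use (4.10) and multiplicativity of $\Delta$ to recognize this as $(h\varepsilon_t(k))_1\varepsilon((h\varepsilon_t(k))_1)\!\!$ --- more precisely, $\Delta(h\varepsilon_t(k))=h_11_1\varepsilon_t(k)\otimes h_21_2$, and then (4.2) collapses $h_11_1\otimes h_21_2$ to $h_1\otimes h_2$, yielding $h\varepsilon_t(k)$ by the counit. Second, your claim that (4.12) ``follows by substitution'' from (4.14) is not quite right: (4.14) is an identity in $H$, whereas (4.12) lives in $H\otimes H$, and there is no obvious way to lift the former to the latter. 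In practice (4.12) is proved directly from the axioms --- insert (4.2), then axiom (iii), then coassociativity and the counit --- and (4.17) then follows by applying (4.12) to the first two legs of $\Delta^2(1_H)$. Your derivation of (4.19) and (4.20) from (4.14), (4.15) via (4.10), (4.11) is correct as written.
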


Such properties imply that $ H_t $ and $ H_s $ are subalgebras of $ H $ containing $ 1_H $ and that
\begin{eqnarray}
hk &=& kh,  \mbox{ for all $h \in H_t, \ k \in H_s$}. \label{4.16}
\end{eqnarray}

With such results, we are able to introduce the concept of weak Hopf algebra. Just as in the case of Hopf algebras, a new map is defined, called antipode, which is related with the counit, as one can see below.
\begin{defn}[Weak Hopf Algebra]
Let $ H $ be a weak bialgebra. We say that $ H $ is a \textit{weak Hopf algebra} if there is a linear map $ S $, called antipode, which satisfies:	\begin{enumerate}
		\item [(i)] $h_1S(h_2)=\varepsilon_t(h)$;
		\item [(ii)] $S(h_1)h_2=\varepsilon_s(h)$;
		\item [(iii)] $S(h_1)h_2S(h_3)=S(h),$
	\end{enumerate}
for all $h \in H$.
\end{defn}

As in the classical theory, the antipode of a weak Hopf algebra is \textit{anti-multiplicative}, that is, $ S (hk) = S (k) S (h) $, and \textit{anti-comultiplicative}, which means that $ S (h) _1 \otimes S (h) _2 = S (h_2) \otimes S (h_1) $. 

\begin{prop}
If $H$ is a weak Hopf algebras, the following identities hold for all $ h \in H $:
\begin{eqnarray}
\varepsilon_t(h) &=& \varepsilon(S(h)1_1)1_2 \label{4.30}\\
\varepsilon_s(h) &=& 1_1\varepsilon(1_2S(h)) \label{4.31}\\
\varepsilon_t(h) &=& S(1_1)\varepsilon(1_2h) \label{4.32}\\
\varepsilon_s(h) &=& \varepsilon(h1_1)S(1_2) \label{4.33}\\
\varepsilon_t \circ S &=& \varepsilon_t \circ \varepsilon_s = S \circ \varepsilon_s \label{4.34}\\
\varepsilon_s \circ S &=& \varepsilon_s \circ \varepsilon_t = S \circ \varepsilon_t. \label{4.35}\\
h_1 \otimes h_2S(h_3) &=& 1_1h \otimes 1_2 \  \label{4.36}\\
S(h_1)h_2 \otimes h_3 &=& 1_1 \otimes h1_2 \ \label{4.37}\\
h_1 \otimes S(h_2)h_3 &=& h1_1 \otimes S(1_2) \  \label{4.38}\\
h_1S(h_2) \otimes h_3 &=& S(1_1) \otimes 1_2h \  \label{4.39}\\
h_2S^{-1}(h_1) \otimes h_3 &=& S(\varepsilon_t(h_1)) \otimes h_2 = 1_1 \otimes 1_2h, \ \label{4.41}
\end{eqnarray}
for all $h \in H$. And,
\begin{eqnarray}
1_1S^{-1}(h) \otimes 1_2 &=& 1_1 \otimes 1_2h, \mbox{ for all $h \in H_t$} \label{4.42}\\
1_1 \otimes S^{-1}(h)1_2 &=& h1_1 \otimes 1_2, \ \mbox{for all $h \in H_s$}.\label{4.43}
\end{eqnarray}

\end{prop}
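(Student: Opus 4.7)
The proposition is a long list of identities, so the plan is to prove them in the stated order since each subsequent identity typically invokes earlier ones. The working toolbox consists of: the definitions $\varepsilon_t(h)=\varepsilon(1_1 h)1_2$ and $\varepsilon_s(h)=1_1\varepsilon(h1_2)$; the three antipode axioms; anti-multiplicativity and anti-comultiplicativity of $S$ (stated just above the proposition); the inclusion $\Delta(1_H)\in H_s\otimes H_t$ from \eqref{4.7}; and the identities \eqref{4.2}--\eqref{4.20} already established.

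For \eqref{4.30} and \eqref{4.31} I would rewrite the right-hand side as $\varepsilon_t(S(h))$, respectively $\varepsilon_s(S(h))$, by the very definition of these maps, and then expand
\[
\varepsilon_t(S(h))\;=\;S(h)_1\,S(S(h)_2)\;=\;S(h_2)\,S(S(h_1))\;=\;S(S(h_1)h_2)\;=\;S(\varepsilon_s(h)),
\]
using anti-comultiplicativity, anti-multiplicativity, and antipode axiom (ii). A separate short calculation with \eqref{4.6} gives $\varepsilon_t(\varepsilon_s(h)) = \varepsilon(1_1 \varepsilon_s(h))1_2 = \varepsilon(1_1 h)1_2 = \varepsilon_t(h)$ and, combined with the chain above, yields simultaneously \eqref{4.30} and \eqref{4.34}; \eqref{4.31} and \eqref{4.35} are the symmetric statements. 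For \eqref{4.32}--\eqref{4.33} the plan is to apply $S$ to \eqref{4.30}--\eqref{4.31} and use the antipode axiom $S(1_1)1_2=\varepsilon_s(1_H)=1_H$ together with \eqref{4.7}, \eqref{4.10} and \eqref{4.11} to move $S$ across the legs of $\Delta(1_H)$.

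For the tensor identities \eqref{4.36}--\eqref{4.39} the strategy is to apply the antipode axiom at a chosen tensor slot of the three-fold coproduct $h_1\otimes h_2\otimes h_3$ and then simplify the resulting occurrence of $\varepsilon_t$ or $\varepsilon_s$ via \eqref{4.12}--\eqref{4.13}. For example, \eqref{4.36} is obtained by regrouping $h_1\otimes h_2 S(h_3) = h_1\otimes \varepsilon_t(h_2)$ and invoking \eqref{4.12}; \eqref{4.38}--\eqref{4.39} are symmetric, with the roles of $\varepsilon_t/\varepsilon_s$ interchanged. The identities \eqref{4.41}--\eqref{4.43} implicitly require $S$ to be invertible; granting this, \eqref{4.41} follows by applying $S^{-1}$ to the first leg of \eqref{4.37} and then simplifying via \eqref{4.13}, and \eqref{4.42}--\eqref{4.43} are the specializations of an analogous formula to $h\in H_t$ or $h\in H_s$ where \eqref{4.10}--\eqref{4.11} collapse the coproduct.

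The main obstacle I anticipate is the bookkeeping in \eqref{4.36}--\eqref{4.41}: each of these three-factor equalities must be massaged so that the antipode axioms line up with matching Sweedler indices, and the subsequent simplification depends on choosing precisely the correct instance of \eqref{4.12}--\eqref{4.13} or \eqref{4.17}--\eqref{4.18}. A subtler point is the very existence of $S^{-1}$ used in \eqref{4.41}--\eqref{4.43}; in the general setting this should be stated as an extra assumption (it holds automatically in the finite-dimensional case, but not in full generality), and the proof should either invoke invertibility explicitly or restrict to that setting.
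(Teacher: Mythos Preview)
The paper does not actually prove this proposition: it sits in the preliminaries, where the authors explicitly say that all demonstrations are omitted and refer to the literature (\cite{Bohminicio}, \cite{Bohmexemplo}, \cite{Bohm}). So there is nothing to compare your argument against; I can only comment on its internal soundness.

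Most of your plan is correct and standard. The tensor identities \eqref{4.36}--\eqref{4.39} do follow exactly as you indicate: e.g.\ $h_1\otimes h_2S(h_3)=h_1\otimes\varepsilon_t(h_2)\stackrel{\eqref{4.12}}{=}1_1h\otimes 1_2$, and the others are the obvious variants. Your remark that \eqref{4.41}--\eqref{4.43} tacitly assume invertibility of $S$ is well taken and worth stating explicitly. Your derivations of the relations in \eqref{4.34}--\eqref{4.35} are also fine: the chain $\varepsilon_t(S(h))=S(h)_1S(S(h)_2)=S(h_2)S(S(h_1))=S(S(h_1)h_2)=S(\varepsilon_s(h))$ is correct, and $\varepsilon_t(\varepsilon_s(h))=\varepsilon_t(h)$ via \eqref{4.6} is correct.

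There is, however, a genuine gap in your treatment of \eqref{4.30}--\eqref{4.31}. You write that the right-hand side of \eqref{4.30} is $\varepsilon_t(S(h))$ ``by the very definition of these maps'', but it is not: by definition $\varepsilon_t(k)=\varepsilon(1_1k)1_2$, whereas the right-hand side of \eqref{4.30} is $\varepsilon(S(h)\,1_1)1_2$, with the arguments of $\varepsilon$ in the opposite order. These two expressions are not equal on the nose, and identifying them requires an additional argument --- for instance via the weak-counit axiom (ii), or via the relation $1_1\otimes 1_2=S(1_2)\otimes S(1_1)$ together with $\varepsilon\circ S=\varepsilon$ --- but those last facts are stated \emph{after} the proposition and are themselves consequences of it, so you must be careful about circularity. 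The same issue affects \eqref{4.31}. Once \eqref{4.30}--\eqref{4.31} are properly established, your route to \eqref{4.32}--\eqref{4.33} (apply $S$ and simplify) becomes viable, but as written the very first step does not go through.
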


With such properties one can be shown that $S(1_H) = 1_H$, $\varepsilon \circ S = \varepsilon$, $S(H_t)=H_s$, $S(H_s)=H_t$ and $1_1 \otimes 1_2 = S(1_2) \otimes S(1_1)$. Thus, if $ H $ is a weak Hopf algebra, then $ S (H) $ also is, with the same counit and antipode.

It is known that every Hopf algebra is a weak Hopf algebra. A natural question that arises is: ``under what conditions does a weak Hopf algebra become a Hopf algebra?" For the purpose of to answer this question, the following result is presented.
\begin{prop}
	A weak Hopf algebra is a Hopf algebra if one of the following equivalent conditions is satisfied:
	\begin{itemize}
		\item [(i)] $\Delta(1_H)=1_h \otimes 1_H$;
		\item[(ii)] $\varepsilon(hk) = \varepsilon(h)\varepsilon(k);$
		\item [(iii)]$h_1S(h_2)=\varepsilon(h)1_H; $
		\item [(iv)]$S(h_1)h_2 = \varepsilon(h)1_H;$
		\item [(v)]$H_t=H_s=\Bbbk 1_H,$
	\end{itemize}
for all $h, k \in H$.	
\end{prop}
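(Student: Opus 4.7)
The plan is to establish a cyclic chain (i) $\Rightarrow$ (ii) $\Rightarrow$ (iii) $\Rightarrow$ (v) $\Rightarrow$ (i) together with the short symmetric detour (i) $\Rightarrow$ (iv) $\Rightarrow$ (v), so that the five conditions collapse into a single equivalence class. The Hopf conclusion is then automatic: (i)--(ii) promote $(H,m,u,\Delta,\varepsilon)$ from a weak bialgebra to an honest bialgebra, and (iii)--(iv) are literally the classical antipode axioms, so a weak Hopf algebra satisfying any one of (i)--(v) is the same thing as a Hopf algebra.

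The routine links come from direct manipulation of the weak bialgebra axioms. For (i) $\Rightarrow$ (ii), substitute $k = 1_H$ into the identity $\varepsilon(hk\ell) = \varepsilon(hk_1)\varepsilon(k_2\ell)$; since (i) makes $1_1 \otimes 1_2 = 1_H \otimes 1_H$, this collapses to multiplicativity of $\varepsilon$. For (ii) $\Rightarrow$ (iii), the computation $\varepsilon_t(h) = \varepsilon(1_1 h) 1_2 = \varepsilon(h)\varepsilon(1_1) 1_2 = \varepsilon(h) 1_H$ uses multiplicativity together with the counit relation, and $h_1 S(h_2) = \varepsilon_t(h)$ finishes the job. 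The parallel computation with $\varepsilon_s(h) = 1_1 \varepsilon(h 1_2)$ yields (i) $\Rightarrow$ (iv). For (iii) $\Rightarrow$ (v), the assumption forces $H_t = \varepsilon_t(H) \subseteq \Bbbk 1_H$; since $1_H \in H_t$, one has $H_t = \Bbbk 1_H$, and then $H_s = S(H_t) = \Bbbk S(1_H) = \Bbbk 1_H$. The implication (iv) $\Rightarrow$ (v) is symmetric via $S(H_s) = H_t$.

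The substantive step is (v) $\Rightarrow$ (i). By \eqref{4.7}, $\Delta(1_H) \in H_s \otimes H_t$, so (v) forces $\Delta(1_H) = \alpha(1_H \otimes 1_H)$ for some $\alpha \in \Bbbk$. Multiplicativity of $\Delta$ applied to $1_H \cdot 1_H = 1_H$ yields $\Delta(1_H)^2 = \Delta(1_H)$, whence $\alpha^2 = \alpha$; and the counit identity $\varepsilon(1_1) 1_2 = 1_H$ excludes $\alpha = 0$ (otherwise $1_H = 0$), so $\alpha = 1$. I expect this scalar-pinning to be the main obstacle: knowing only that $\Delta(1_H)$ lies in the one-dimensional subspace $\Bbbk(1_H \otimes 1_H)$ does not by itself single out the value $1$, and that value emerges only from the interplay between $\Delta$ being an algebra map and the counit axiom.
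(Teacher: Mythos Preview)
Your argument is correct. The cycle (i) $\Rightarrow$ (ii) $\Rightarrow$ (iii) $\Rightarrow$ (v) $\Rightarrow$ (i) together with (i) $\Rightarrow$ (iv) $\Rightarrow$ (v) closes up, and each individual step is sound: the weak counit axiom with $k=1_H$ gives (i) $\Rightarrow$ (ii); multiplicativity of $\varepsilon$ collapses $\varepsilon_t$ (resp.\ $\varepsilon_s$) to $\varepsilon(\,\cdot\,)1_H$ for (ii) $\Rightarrow$ (iii) and (i) $\Rightarrow$ (iv); the identities $H_s=S(H_t)$, $H_t=S(H_s)$, $S(1_H)=1_H$ handle (iii),(iv) $\Rightarrow$ (v); and your scalar-pinning via \eqref{4.7}, idempotency of $\Delta(1_H)$, and the counit relation correctly forces $\alpha=1$ in (v) $\Rightarrow$ (i).

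As for comparison: the paper does not prove this proposition at all. It sits in the preliminaries, where the authors state explicitly that all results ``already exist in the literature, hence the respective demonstrations will be omitted,'' with references to B\"ohm et al. So there is no approach in the paper to contrast with yours; your write-up simply supplies what the paper chose to cite.
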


In order to construct an example of weak Hopf algebra, we present the following definition.

\begin{defn}[Groupoid] \label{grupoide}
Consider $ \mathcal{G} $ a non-empty set with a binary operation partially defined. Given $ g, h \in \mathcal{G} $, we write $ \exists gh $ whenever the product $ gh $ is set (similarly it will be used $ \nexists gh $ whenever the product is not defined). Thus, $ \mathcal{G} $ is called \textit{groupoid} if
	
	\begin{itemize}
		\item [(i)] For all $g, h, l \in \mathcal{G}$, $\exists(gh)l$ if and only if $\exists g(hl)$, and, in this case, $(gh)l = g(hl)$;
		\item [(ii)] For all $g, h, l \in \mathcal{G}$, $\exists(gh)l$ if and only if $\exists gh$ and $\exists hl$;
		\item[(iii)] For each $g \in \mathcal{G}$ there are unique elements $d(g), r(g) \in \mathcal{G}$ such that $\exists gd(g)$, $\exists r(g)g$ and $gd(g)=g=r(g)g$;
		\item[(iv)] For each $g \in \mathcal{G}$ there exists an element such that $d(g)= g^{-1}g$ and $r(g)=gg^{-1}$.
	\end{itemize}
\end{defn}

An element $ e $ is said identity in $ \mathcal{G} $ if for some $ g \in \mathcal{G} $, $ e = d (g) = r(g^{-1})$. $ \mathcal{G}_0 $ will denote the set of all identity elements of $ \mathcal{G} $. It is immediate to see that for every element $e \in \mathcal{G}_0$ the set $$\mathcal{G}_e = \{g \in \mathcal{G} \ | \ d(g)=e=r(g) \}$$ is a group. Besides that, one defines the set
$$\mathcal{G}^{2} = \{(g,h) \in \mathcal{G} \times \mathcal{G} \ | \ \exists gh\}$$
of all pairs of elements composable in $\mathcal{G}$.

As an immediate consequence of Definition \ref{grupoide} is the following result.

\begin{prop}
	Let $\mathcal{G}$ be a groupoid. Then the following properties hold:
	\begin{itemize}
	\item [(i)] $e^2 = e$, $d(e) = e = r (e) $ and $ e = e^{-1}$, for all $e \in \mathcal{G}_0$.
	\item  [(ii)] For each $g \in \mathcal{G}$, the element $ g^{-1}$  is the only one that satisfies that $d(g)= g^{-1}g$ and $r(g)=gg^{-1}$. In addition, $({g^{-1}})^{-1} = g$.
	\item [(iii)] For all $g, h \in \mathcal{G}$, $\exists gh$ if and only if $d(g)=r(h)$ and, in this case, $d(gh) = d(h)$ and $r(gh)=r(g)$;
	\item [(iv)] For all $g, h \in \mathcal{G}$, $\exists gh$ if and only if $\exists h^{-1} g^{-1}$ and, in this case, ${(gh)}^{-1} = h^{-1} g^{-1}$.
	\end{itemize}
\end{prop}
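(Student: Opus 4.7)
The plan is to prove the four items in the order (i), (iii), (ii), (iv), which avoids circularity since (ii) needs the identities $d(gh)=d(h)$ and $r(gh)=r(g)$ produced inside (iii), and (iv) uses both (ii) and (iii). For (i), fix $e\in\mathcal{G}_0$ and choose $g$ with $e=d(g)$. Axiom (ii) of Definition~\ref{grupoide} gives $\exists(gd(g))d(g)$, and associativity then yields $g(d(g)d(g))=(gd(g))d(g)=gd(g)=g$, so the uniqueness of $d(g)$ forces $d(g)d(g)=d(g)$, i.e.\ $e^{2}=e$. Since $\exists ee$ and $ee=e$, the element $e$ simultaneously witnesses the defining properties of $d(e)$ and $r(e)$, so $d(e)=r(e)=e$. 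Finally, axiom (iv) supplies $e^{-1}$ with $ee^{-1}=r(e)=e$, and uniqueness of $d(e)=e$ gives $e^{-1}=e$.

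For (iii), assume $\exists gh$. Since $g=gd(g)$ and $h=r(h)h$, axiom (ii) produces $\exists d(g)h$ and $\exists gr(h)$. Associativity then yields $r(g)(gh)=(r(g)g)h=gh$ and $(gh)d(h)=g(hd(h))=gh$, and the uniqueness clause of axiom (iii) gives $r(gh)=r(g)$ and $d(gh)=d(h)$. To obtain $d(g)=r(h)$, compute $(gh)h^{-1}=g(hh^{-1})=gr(h)$ and $g^{-1}(gh)=(g^{-1}g)h=d(g)h$: by the uniqueness of $d(g)$ and $r(h)$, it suffices to show $gr(h)=g$ and $d(g)h=h$, which follows by playing these two identities off each other using the idempotence $r(h)^{2}=r(h)$, $d(g)^{2}=d(g)$ from (i). The converse is immediate: if $d(g)=r(h)$, axiom (ii) applied to $\exists gd(g)$ and $\exists r(h)h$ produces $\exists(gd(g))h=\exists gh$.

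For (ii), if $h$ satisfies $hg=d(g)$ and $gh=r(g)$, then (iii) applied to these products gives $d(h)=r(g)$ and $r(h)=d(g)$, whence
\[
h=r(h)h=d(g)h=g^{-1}(gh)=g^{-1}r(g)=g^{-1},
\]
where the last equality follows from $g^{-1}r(g)=g^{-1}(gg^{-1})=(g^{-1}g)g^{-1}=d(g)g^{-1}=r(g^{-1})g^{-1}=g^{-1}$, using $r(g^{-1})=d(g)$, which in turn is (iii) applied to $g^{-1}g=d(g)\in\mathcal{G}_0$ together with part (i). The identity $(g^{-1})^{-1}=g$ then follows, because by the same argument $d(g^{-1})=r(g)$ and $r(g^{-1})=d(g)$, so $g$ itself satisfies the two equations defining $(g^{-1})^{-1}$, and the uniqueness just established concludes.

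Finally, (iv) reduces to the chain of equivalences $\exists gh\Leftrightarrow d(g)=r(h)\Leftrightarrow d(h^{-1})=r(g^{-1})\Leftrightarrow\exists h^{-1}g^{-1}$, using (iii) at the outer ends and $d(g^{-1})=r(g)$, $r(g^{-1})=d(g)$ from (ii) in the middle; the formula $(gh)^{-1}=h^{-1}g^{-1}$ is verified by computing $(h^{-1}g^{-1})(gh)=h^{-1}(g^{-1}g)h=h^{-1}d(g)h=h^{-1}r(h)h=h^{-1}h=d(h)=d(gh)$, and symmetrically $(gh)(h^{-1}g^{-1})=r(gh)$, so the uniqueness from the first half of (ii) yields the formula. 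The step I anticipate as the main obstacle is deriving $d(g)=r(h)$ in the forward direction of (iii): the partial multiplication prevents a naive cancellation, so one must extract a common idempotent out of two separate calculations, using $g^{-1}$ on the left and $h^{-1}$ on the right, and invoking the uniqueness clauses of axiom (iii) rather than any direct cancellation law.
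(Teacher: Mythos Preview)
The paper does not actually prove this proposition; it is stated as ``an immediate consequence'' of Definition~\ref{grupoide} with no argument given. So there is nothing in the paper to compare your approach against, and the question becomes simply whether your proof is complete.

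Parts (i), (ii), (iv), and the converse direction of (iii) are fine. The gap is in the forward direction of (iii), precisely at the point you yourself flagged as the main obstacle. You correctly reduce $d(g)=r(h)$ to the pair of claims $gr(h)=g$ and $d(g)h=h$, but the justification ``which follows by playing these two identities off each other using the idempotence $r(h)^2=r(h)$, $d(g)^2=d(g)$'' is not an argument. Idempotence only tells you $d(gr(h))=r(h)$ and $r(d(g)h)=d(g)$, which are consistent with what you want but do not force it; and the obvious cancellation attempts (multiply $(gr(h))h=gh$ by $h^{-1}$ on the right, or $g(d(g)h)=gh$ by $g^{-1}$ on the left) collapse to tautologies because you land back on $gr(h)$ or $d(g)h$ rather than on $g$ or $h$.

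One way to close the gap, using only what you have already established at that point: set $y=d(g)h=g^{-1}(gh)$. From the proven half of (iii) and part (i) you get $r(g^{-1})=d(g)$, hence $r(y)=d(g)$ and $d(y)=d(h)$, and then $d(y^{-1})=d(g)$. Since $y^{-1}y=d(h)$, the product $h(y^{-1}y)=hd(h)=h$ exists, so $hy^{-1}$ exists with $(hy^{-1})y=h$. Writing $y=d(g)h$ and using $d(hy^{-1})=d(y^{-1})=d(g)$ gives $(hy^{-1})d(g)=hy^{-1}$, whence $(hy^{-1})h=((hy^{-1})d(g))h=(hy^{-1})y=h$. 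Uniqueness of $r(h)$ now forces $hy^{-1}=r(h)$, and comparing domains yields $r(h)=d(r(h))=d(hy^{-1})=d(g)$. This is the ``extract a common idempotent via $g^{-1}$ and $h^{-1}$'' that your closing paragraph gestures at; it just needs to be written out.
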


\begin{ex}[Groupoid Algebra] \label{algebradegrupoide} Let $\mathcal{G}$ be a groupoid such that the cardinality $|\mathcal{G}_{0}|$ of $\mathcal{G}_{0}$ is finite and $ \Bbbk \mathcal{G}$ the vector space with basis indexed by the elements of $\mathcal{G}$ denoted by $\lbrace \delta_g \rbrace_{g\in \mathcal{G}}$. Then, $\Bbbk \mathcal{G}$ is a weak Hopf algebra with the following structure:
	\begin{eqnarray*}
		m(\delta_g\otimes \delta_h)=\left\{
		\begin{array}{rl}
			\delta_{gh}, & \text{if $\exists gh$ },\\
			0, & \text{ otherwise }
		\end{array} \right.
	\end{eqnarray*}	
	$$u(1_{\Bbbk})=1_\mathcal{G} = \sum_{e \in \mathcal{G}_{0}} \delta_e \ \ \ \ \ \ \Delta(\delta_g)=\delta_g\otimes \delta_g$$
	$$\varepsilon(\delta_g)=1_{\Bbbk} \ \ \ \ \ \ \ \ \ \ \ \ \ \  S(\delta_g)=\delta_{g^{-1}}.$$
	
	\label{ex_grupoide}
\end{ex}

As in the Hopf case, when $H$ is a finite dimensional weak Hopf algebra the dual structure $H^{*}=Hom({H, \Bbbk})$ is a weak Hopf algebra with the convolution product given by $m(f\otimes g)(h)=(f*g)(h)=f(h_1)g(h_2)$, for all $h \in H,$ unit given by $u(1_{\Bbbk})=1_{H^*}= \varepsilon_H$, coprodut defined by the relation $\Delta(f) = f_1 \otimes f_2 \Leftrightarrow f(hk)=f_1(h)f_2(k)$, for all $h,k \in H$, counit given by $\varepsilon_{H^*}(f)=f(1_H)$, and antipode $S_{H^{*}}(f)= f \circ S_H$. Besides that, 
$(\varepsilon_t)_{H^*}(f)= f \circ (\varepsilon_t)_H$
and
$(\varepsilon_s)_{H^*}(f)= f \circ (\varepsilon_s)_H.$

The example below is presented to illustrate this result.

\begin{ex} [Dual Groupoid Algebra] Let $\mathcal{G}$ be a finite groupoid and $ (\Bbbk \mathcal{G})^{*}$  the  vector space with basis indexed by the elements of $\mathcal{G}$ given by $\lbrace p_g \rbrace_{g\in \mathcal{G}}$, where
	\begin{eqnarray*}
		p_g(\delta_h)=\left\{
		\begin{array}{rl}
			1_{\Bbbk}, & \text{if $g=h$ },\\
			0, & \text{ otherwise. }
		\end{array} \right.
	\end{eqnarray*}	
	
	Then, $(\Bbbk \mathcal{G})^{*}$ is a weak Hopf algebra with the following structure:
	\begin{eqnarray*}
		m(p_g\otimes p_h)=p_g * p_h \ \ \ \ \ \ \ 	1_{(\Bbbk \mathcal{G})^{*}} = \sum_{g \in \mathcal{G}} p_g
	\end{eqnarray*}	

	$$\Delta(p_g)=\sum_{h \in \mathcal{G}, \exists h^{-1}g} p_h \otimes p_{h^{-1}g} \ \ \ \ \varepsilon(p_g)=p_g(1_{\Bbbk \mathcal{G}}) \ \ \ \ S(p_g)=p_{g} \circ S.$$
	
	\label{ex_grupoidedual}
\end{ex}

Another important example of a weak Hopf algebra is the one presented in \cite{Bohmexemplo} where a weak Hopf algebra is generated by a finite abelian group. 

\begin{ex}\label{exemplodogrupo}
Consider $G$ a finite abelian group with order $|G|=N$, where $N$ is not a multiple of the characteristic of $\Bbbk$. If $\Bbbk G$ is considered the algebra with basis indexed by the elements of $G$ and with coalgebra structure given by:

	$$\Delta(g)=\frac{1}{N}\sum_{h \in {G}} gh  \otimes h^{-1} \ \ \ \ \varepsilon(g)=\left\{
	\begin{array}{rl}
	N, & \text{if $g=1_G$ },\\
	0, & \text{ otherwise. }
	\end{array} \right.
	$$
Then $\Bbbk G$ is a weak Hopf algebra with antipode defined by $S(g)=g$. Besides that, $\varepsilon_s(g)=\varepsilon_t(g)=g$, for all $g \in G$, what implies that $H_s=H_t=\Bbbk G$.

\end{ex}

\subsection{Partial Actions on Algebras} \quad In this section the notion of a partial action theory of weak Hopf algebras on algebras will be presented. More details can be found in \cite{Caenepeel}, \cite{Felipeweak} and \cite{Glauberglobalizations}. We start recalling the notion of $H$-module algebra, when $H$ is a weak bialgebra.

Let $H$ be a weak bialgebra and $A$ an algebra. If there exists a linear map
	\begin{eqnarray*}
		\vartriangleright: H \otimes A & \longrightarrow & A\\
		h \otimes a & \longmapsto & h\vartriangleright a
	\end{eqnarray*}
such that, for $h,k\in H$ and $a,b\in A$ satisfies,
	\begin{enumerate}
		\item [(MA1)] $1_H \vartriangleright a=a$;
		\item [(MA2)] $h \vartriangleright ab=(h_1 \vartriangleright a)(h_2 \vartriangleright b)$;
		\item [(MA3)] $h\vartriangleright(k \vartriangleright a)=hk\vartriangleright a$;
		\item[(MA4)] $h \vartriangleright 1_A = \varepsilon_t(h) \vartriangleright 1_A.$
	\end{enumerate}
then, we say that $A$ is a \textit{left $H$-module algebra}, or that $H$ acts on the algebra $A$. Similarly, we can define a right $H$-module algebra.

In \cite{Felipeweak} the authors showed that, if $H$ is a weak Hopf algebra, then, the conditions (MA1)-(MA3) imply (MA4). Moreover, it was introduced the notion of partial module algebras for the case of weak Hopf algebras.

\begin{defn} An algebra $A$ is a \textit{left partial $H$-module algebra} if there exists a linear map
	\begin{eqnarray*}
		\cdot: H \otimes A & \longrightarrow & A\\
		h \otimes a & \longmapsto & h\cdot a
	\end{eqnarray*}
such that, for $h,k\in H$ and $a,b\in A$,
	\begin{enumerate}
		\item [(PMA1)] $1_H \cdot a=a$;
		\item [(PMA2)] $h\cdot ab=(h_1\cdot a)(h_2\cdot b)$;
		\item [(PMA3)] $h\cdot(k\cdot a)=(h_1\cdot 1_A)(h_2k\cdot a)$.
	\end{enumerate}
\end{defn}

In this case we say that there is a partial action of $H$ on $A$. If the additional condition $h\cdot (k\cdot a)=(h_1k\cdot a)(h_2\cdot 1_A)$ is required, we say that the partial action is \textit{symmetric}. Analogously, one defines a right partial module algebra.

\vu 

Similar to the case of partial Hopf actions, in \cite{Felipeweak} the authors defined globalization for a partial module algebra as below.

\begin{defn} \label{globallizacaoalgebra}
Let $A$ be a left partial $H$-module algebra. We say that  $(B, \theta)$ is a globalization of $A$ if $B$ is a left $H$-module algebra via $\triangleright: H \otimes B \rightarrow B$, and
	
	\begin{itemize}
		\item [(i)] $\theta: A \rightarrow B$ is an algebra monomorphism such that $\theta(A)$ is a right ideal of $B$;
		\item [(ii)] The partial action on $A$ is equivalent to the induced partial action by $\triangleright$ on $\theta(A)$, what means, $\theta(h \cdot a)= h \cdot \theta(a)= \theta(1_A)(h \triangleright \theta(a))$;
		\item[(iii)] $B$ is a left $H$-module algebra generated by $\theta(A)$, i.e, $B = H \triangleright \theta (A)$.
	\end{itemize}
\end{defn}

Furthermore, every left partial module algebra admits a globalization, as proved in \cite{Felipeweak}.

\section{Partial Actions on Coalgebras}
\quad \
We begin this section by introducing the notion presented by G. Böhm in \cite{Bohminicio}, of actions of weak bialgebras on coalgebras. 

\begin{defn}\label{bohmincio}
	We say that $C$ is a \textit{left $H$-module coalgebra} if there is a linear map
	$$\begin{array}{rl}
	\vartriangleright: H \otimes C &\rightarrow C \\
	h \otimes c &\mapsto h \vartriangleright c
	\end{array}$$
	such that, for any $h, k \in H$ e $c \in C$:
	\begin{enumerate}
		\item[(\label{MC2}MC1)] $1_H \vartriangleright c = c;$
			\item[(\label{MC1}MC2)] $\Delta (h \vartriangleright c) = h_1 \vartriangleright c_1 \otimes h_2 \vartriangleright c_2;$
			\item[(\label{MC3}MC3)] $h \vartriangleright k \vartriangleright c = hk \vartriangleright c;$
\item[(MC4)] $\varepsilon(h \vartriangleright c) = \varepsilon( \varepsilon_s(h) \vartriangleright c).$
	\end{enumerate}
	\end{defn}

In this case, we also say that $ H $ acts on the coalgebra $ C $. One defines similarly a right $ H $-module coalgebra.

If $H$ is a weak Hopf algebra the condition (MC4) is a consequence of (MC1)-(MC3), as follows.
\begin{prop}\label{propglobal}
If $H$ is a weak Hopf algebra and a linear map
	$$\begin{array}{rl}
	\vartriangleright: H \otimes C &\rightarrow C \\
	h \otimes c &\mapsto h \vartriangleright c
	\end{array}$$
satisfies (MC1)-(MC3) then, (MC4) is automatically satisfied.
\end{prop}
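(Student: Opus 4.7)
The plan is to rewrite each of $\varepsilon_C(h \vartriangleright c)$ and $\varepsilon_C(\varepsilon_s(h) \vartriangleright c)$ as the same scalar $\varepsilon_C(1_1 \vartriangleright c_1)\,\varepsilon_C(h 1_2 \vartriangleright c_2)$, where $\Delta(1_H) = 1_1 \otimes 1_2$ and $\Delta_C(c) = c_1 \otimes c_2$. The antipode enters only in order to convert $\varepsilon_s(h)$ into the form $S(h_1) h_2$, after which the weak Hopf identity \eqref{4.37} does the real work.

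For $\varepsilon_C(h \vartriangleright c)$ I would first apply (MC2) to the equality $1_H \vartriangleright c = c$ of (MC1), obtaining $c_1 \otimes c_2 = 1_1 \vartriangleright c_1 \otimes 1_2 \vartriangleright c_2$; counitality of $C$ then yields $c = \varepsilon_C(1_1 \vartriangleright c_1)(1_2 \vartriangleright c_2)$. Acting on this by $h$ and collapsing $h \vartriangleright (1_2 \vartriangleright -)$ into $(h 1_2) \vartriangleright -$ by (MC3) gives $h \vartriangleright c = \varepsilon_C(1_1 \vartriangleright c_1)(h 1_2 \vartriangleright c_2)$, and applying $\varepsilon_C$ delivers the desired form.

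For the other side, write $\varepsilon_s(h) = S(h_1) h_2$ by the second antipode axiom and use (MC3) to rewrite $\varepsilon_s(h) \vartriangleright c = S(h_1) \vartriangleright (h_2 \vartriangleright c)$. Expand $\Delta_C(h_2 \vartriangleright c)$ by (MC2), passing to 3-fold Sweedler in $h$, and use counitality in $C$ to write $h_2 \vartriangleright c = (h_2 \vartriangleright c_1)\,\varepsilon_C(h_3 \vartriangleright c_2)$. One further application of (MC3) reassembles $S(h_1) h_2$ inside, producing
\[
\varepsilon_s(h) \vartriangleright c = \varepsilon_C(h_3 \vartriangleright c_2)\,(S(h_1) h_2 \vartriangleright c_1).
\]
Now \eqref{4.37} asserts $S(h_1) h_2 \otimes h_3 = 1_1 \otimes h 1_2$ in $H \otimes H$, which, substituted into the bilinear expression $(a,b) \mapsto \varepsilon_C(b \vartriangleright c_2)(a \vartriangleright c_1)$, converts the right-hand side into $\varepsilon_C(h 1_2 \vartriangleright c_2)(1_1 \vartriangleright c_1)$. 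Applying $\varepsilon_C$ gives the same scalar as in the first part (scalars commute), and the two sides of (MC4) agree.

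The main technical obstacle I anticipate is the careful Sweedler bookkeeping, in particular the transition from 2-fold to 3-fold indexing of $\Delta(h)$ forced by the inner use of (MC2) on $h_2 \vartriangleright c$, and the correct interpretation of \eqref{4.37} as an identity in $H \otimes H$ evaluated against a bilinear map into $C$, which is what legitimizes the final substitution.
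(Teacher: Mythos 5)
Your proof is correct and takes essentially the same route as the paper: both arguments reduce each side of (MC4) to the expression $\varepsilon(1_1\vartriangleright c_1)\,\varepsilon(h1_2\vartriangleright c_2)$ using (MC1)--(MC3), counitality, and the antipode axiom $S(h_1)h_2=\varepsilon_s(h)$. The only cosmetic difference is that the paper writes a single chain from $\varepsilon(h\vartriangleright c)$ to $\varepsilon(\varepsilon_s(h)\vartriangleright c)$ and cites \eqref{4.13} where you cite the equivalent identity \eqref{4.37}.
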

\begin{proof}Given $c \in C$ and $h \in H$,
	$$\begin{array}{rl}
	\varepsilon(h \vartriangleright c)&=\varepsilon(h \vartriangleright 1_H \vartriangleright c)\\
	&=\varepsilon(1_{H_1} \vartriangleright c_1) \varepsilon(h1_{H_2} \vartriangleright c_2)\\
	&= \varepsilon(\varepsilon_s(h_1) \vartriangleright c_1) \varepsilon(h_2 \vartriangleright c_2) \\
	&=\varepsilon(S(h_1)h_2 \vartriangleright c_1) \varepsilon(h_3 \vartriangleright c_2) \\
	&=\varepsilon(S(h_1) \vartriangleright h_2 \vartriangleright c_1) \varepsilon(h_3 \vartriangleright c_2)\\
	&=\varepsilon(S(h_1)h_2 \vartriangleright c)\\
	&=\varepsilon(\varepsilon_s(h) \vartriangleright c).
	\end{array}$$
\end{proof}

\begin{ex} \label{Exemploglobal} 
	
(i) Every weak Hopf algebra $H$ is a left $H$-module coalgebra via its multiplication.

(ii) Consider $\Bbbk \mathcal{G}$  the groupoid algebra, then $\Bbbk \mathcal{G}$ is a left $\Bbbk \mathcal{G}$-module coalgebra via
$$\begin{array}{rl}
\triangleright : \Bbbk \mathcal{G} \otimes \Bbbk \mathcal{G} &\rightarrow \Bbbk \mathcal{G}\\
\delta_h \otimes \delta_g &\mapsto \delta_gS(\delta_h).
\end{array}$$
\end{ex}

\subsection{Partial Module Coalgebra}
\quad
Throughout this section we will present the concept of partial action of a weak Hopf algebra on a coalgebra. We will also show some properties and some examples that provide the basis for this theory.

\begin{defn}\label{modulocoalgebraparcial}
	We say that $ C $ is a \textit{left partial $H$-module coalgebra} if there is a linear map
	$$\begin{array}{rl}
	\cdot: \ H \otimes C &\rightarrow C \\
	h \otimes c &\mapsto h \cdot c
	\end{array}$$
	such that, for any $h, k \in H$ and $c \in C$:
	\begin{enumerate}
	\item[(\label{MCP2}PMC1)] $1_H \cdot c = c$;
		\item[(\label{MCP1}PMC2)] $\Delta (h \cdot c) = h_1 \cdot c_1 \otimes h_2 \cdot c_2$;
			\item[(\label{MCP3}PMC3)] $h \cdot k \cdot c = (hk_1 \cdot c_1) \varepsilon(k_2 \cdot c_2).$
	\end{enumerate}
In addition, we say that $C$ is a \textit{left symmetric partial module coalgebra} if it still satisfies $h \cdot k \cdot c = \varepsilon(k_1 \cdot c_1) (hk_2 \cdot c_2).$
\end{defn}

In this case, we also say that $ H $ acts partially on the coalgebra $ C $. Analogously, one can define a right symmetric partial $H$-module coalgebra. Throughout this section whenever we refer to a partial $H$-module coalgebra it will be considered on the left. In addition, the notion of partial module coalgebra  generalizes the one of module coalgebra.

\begin{ex}\label{exemplossemlambida}
	Consider $\Bbbk \mathcal{G}$ the groupoid algebra and $e \in \mathcal{G}_0$. Then, the group algebra $\Bbbk \mathcal{G}_e $ is a partial $\Bbbk \mathcal{G}$-module coalgebra via
	$$\delta_g \cdot h=\left \{ \begin{array}{rl}  h, \ if \ g = e \\
	0, if \ g \neq e.
	\end{array}
	\right. $$
\end{ex}

The next result characterizes the necessary and sufficient condition for a partial $H$-module coalgebra to be a global $H$-module coalgebra.
\begin{prop}
Let $C$ a partial $H$-module coalgebra. Then, $C$ is a $H$-module coalgebra if and only if	$ \varepsilon(h \cdot c) = \varepsilon( \varepsilon_s(h) \cdot c),$ for all $h \in H$ and $c \in C$.
\end{prop}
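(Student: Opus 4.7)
The forward direction will be immediate: if $C$ is a (global) $H$-module coalgebra in the sense of Definition~\ref{bohmincio}, axiom (MC4)---which, by Proposition~\ref{propglobal}, is automatic for weak Hopf algebras---reads precisely $\varepsilon(h\cdot c)=\varepsilon(\varepsilon_s(h)\cdot c)$, so there is nothing to prove.

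For the converse, since (PMC1)--(PMC2) coincide with (MC1)--(MC2), the whole task is to upgrade the partial associativity (PMC3) to the full associativity (MC3), $h\cdot(k\cdot c)=hk\cdot c$, under the extra hypothesis. I plan to start from (PMC3),
\[
h\cdot(k\cdot c) = (hk_1\cdot c_1)\,\varepsilon(k_2\cdot c_2),
\]
and use the hypothesis to replace $\varepsilon(k_2\cdot c_2)$ by $\varepsilon(\varepsilon_s(k_2)\cdot c_2)$. The critical tool will be identity \eqref{4.38}: combined with the antipode axiom $S(h_1)h_2=\varepsilon_s(h)$ applied on the Sweedler factor $k_2$, it yields
\[
k_1\otimes\varepsilon_s(k_2) \;=\; k 1_1\otimes S(1_2),
\]
so the running expression becomes $(hk 1_1\cdot c_1)\,\varepsilon(S(1_2)\cdot c_2)$.

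To close, I will run the procedure in reverse on the $1_H$-side. By \eqref{4.7} we have $1_2\in H_t$, and \eqref{4.35} together with the fact that $\varepsilon_t$ restricts to the identity on $H_t$ shows that $\varepsilon_s$ and $S$ coincide on $H_t$; hence $S(1_2)=\varepsilon_s(1_2)$. Applying the hypothesis in reverse replaces $\varepsilon(S(1_2)\cdot c_2)$ by $\varepsilon(1_2\cdot c_2)$, and the expression becomes $(hk 1_1\cdot c_1)\,\varepsilon(1_2\cdot c_2)$. One further application of (PMC3), this time to the pair $(hk,1_H)$, followed by (PMC1) will identify this with $hk\cdot(1_H\cdot c)=hk\cdot c$, which is (MC3). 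The main obstacle I anticipate is recognising the Sweedler identity $k_1\otimes\varepsilon_s(k_2)=k 1_1\otimes S(1_2)$ as the right bridge: it converts a ``$k$-coupled'' source term into a ``$1_H$-coupled'' one, after which the collapse $\varepsilon_s|_{H_t}=S|_{H_t}$ makes the argument perfectly symmetric and the closing step essentially forced.
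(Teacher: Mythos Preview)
Your proof is correct and follows essentially the same route as the paper: both start from (PMC3), apply the hypothesis to replace $\varepsilon(k_2\cdot c_2)$ by $\varepsilon(\varepsilon_s(k_2)\cdot c_2)$, and then use \eqref{4.38} to pass from $k$-Sweedler legs to $1_H$-Sweedler legs, arriving at $(hk\,1_1\cdot c_1)\,\varepsilon(S(1_2)\cdot c_2)$. The only difference is in the closing step: the paper applies \eqref{4.38} once more (in reverse, with $hk$) to reach $(h_1k_1\cdot c_1)\,\varepsilon(\varepsilon_s(h_2k_2)\cdot c_2)$ and then uses the hypothesis and the counit axiom, whereas you observe $S(1_2)=\varepsilon_s(1_2)$ via \eqref{4.7} and \eqref{4.35}, apply the hypothesis once more, and finish with (PMC3) for the pair $(hk,1_H)$ together with (PMC1) --- a slightly more direct collapse, but the same idea.
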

 \begin{proof}
Let $C$ be a partial $H$-module coalgebra such that $ \varepsilon(h \cdot c) = \varepsilon( \varepsilon_s(h) \cdot c),$ then, it is to enough to show that $h \cdot k \cdot c = hk \cdot c$, for all $h,k \in H$ and $c \in C$. This follows because,
\begin{eqnarray*}
h \cdot k \cdot c &=& (hk_1 \cdot c_1) \varepsilon(k_2 \cdot c_2)\\
&=& (hk_1 \cdot c_1) \varepsilon( \varepsilon_s(k_2) \cdot c_2)\\
&=& (hk_1 \cdot c_1) \varepsilon(S(k_2)k_3 \cdot c_2)\\
&\stackrel{(\ref{4.38})}{=} &(hk 1_{H_1} \cdot c_1) \varepsilon(S(1_{H_2}) \cdot c_2)\\
&\stackrel{(\ref{4.38})}{=}& (h_1k_1 \cdot c_1) \varepsilon(S(h_2k_2)h_3k_3 \cdot c_2)\\
&=& (h_1k_1 \cdot c_1) \varepsilon(\varepsilon_s(h_2k_2) \cdot c_2)\\
&=& (h_1k_1 \cdot c_1) \varepsilon(h_2k_2 \cdot c_2)\\
&=& hk \cdot c.
\end{eqnarray*}
The converse is immediate by Proposition \ref{propglobal}.
\end{proof}
The next results have the objective to present some properties that involve $ H_t $ and $ H_s $. The idea is to explore how the actions of these algebras can be cha\-rac\-te\-ri\-zed.
\begin{prop}\label{Ht}
	Suppose that $C$ is a partial $H$-module coalgebra, then, for all $h \in H_t$, $k \in H$ and $c \in C$ we have:
	\begin{enumerate}
		\item[(i)] $h \cdot k \cdot c=hk \cdot c;$
		\item[(ii)] $\varDelta (h \cdot c) = h \cdot c_1 \otimes c_2;$
		\item[(iii)] $ \varepsilon(h \cdot c) = \varepsilon( \varepsilon_s(h) \cdot c)$
	\end{enumerate}	
\end{prop}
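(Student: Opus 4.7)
The plan is to prove (i) directly from (PMC3), then to bootstrap (ii) from (i), and finally to adapt the proof of Proposition \ref{propglobal} for (iii), with (i) playing the role of (MC3) in the global case. For (i), expanding the left-hand side via (PMC3) gives $h \cdot k \cdot c = (hk_1 \cdot c_1)\varepsilon(k_2 \cdot c_2)$. The key identities are $\Delta(h) = 1_1 h \otimes 1_2$ from \eqref{4.10} (since $h \in H_t$), the factorization \eqref{4.2}, and the commutation $h 1_1 = 1_1 h$, which holds because $h \in H_t$ and $1_1 \in H_s$ by \eqref{4.16}. Combining these,
\[
hk_1 \otimes k_2 = h 1_1 k_1 \otimes 1_2 k_2 = 1_1 h k_1 \otimes 1_2 k_2 = (hk)_1 \otimes (hk)_2,
\]
so $h \cdot k \cdot c = ((hk)_1 \cdot c_1)\varepsilon((hk)_2 \cdot c_2)$, and applying $(\mathrm{id} \otimes \varepsilon)$ to the expression for $\Delta(hk \cdot c)$ given by (PMC2) identifies this with $hk \cdot c$.

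For (ii), (PMC2) combined with $\Delta(h) = 1_1 h \otimes 1_2$ gives $\Delta(h \cdot c) = 1_1 h \cdot c_1 \otimes 1_2 \cdot c_2$. Independently, applying (PMC2) to the identity $1_H \cdot c = c$ produces the $C \otimes C$-identity $c_1 \otimes c_2 = 1_{1'} \cdot c_1 \otimes 1_{2'} \cdot c_2$, where $1_{1'} \otimes 1_{2'}$ is an independent copy of $\Delta(1_H)$. Substituting this into $h \cdot c_1 \otimes c_2$ and invoking (i) with outer element $h \in H_t$, together with the commutation $h 1_{1'} = 1_{1'} h$, yields
\[
h \cdot c_1 \otimes c_2 = h \cdot (1_{1'} \cdot c_1) \otimes 1_{2'} \cdot c_2 = 1_{1'} h \cdot c_1 \otimes 1_{2'} \cdot c_2 = \Delta(h \cdot c).
\]

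For (iii), I mirror the proof of Proposition \ref{propglobal}, using (i) wherever (MC3) is invoked. Starting again from $c_1 \otimes c_2 = 1_1 \cdot c_1 \otimes 1_2 \cdot c_2$ and applying $\mathrm{id} \otimes (h \cdot -)$ to the second factor, (i) yields $h \cdot (1_2 \cdot c_2) = h 1_2 \cdot c_2$, hence $c_1 \otimes h \cdot c_2 = 1_1 \cdot c_1 \otimes h 1_2 \cdot c_2$. Applying $\varepsilon \otimes \varepsilon$ and using $\varepsilon(c_1)\varepsilon(h \cdot c_2) = \varepsilon(h \cdot c)$ collapses this to
\[
\varepsilon(h \cdot c) = \varepsilon(1_1 \cdot c_1)\,\varepsilon(h 1_2 \cdot c_2).
\]
Now \eqref{4.13} substitutes $1_1 \otimes h 1_2 = \varepsilon_s(h_1) \otimes h_2$; expanding $\varepsilon_s(h_1) = S(h_1) h_2$ at the triple-coproduct level and then recombining the factors as in Proposition \ref{propglobal} should produce $\varepsilon(\varepsilon_s(h) \cdot c)$.

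The step I expect to be the main obstacle is the final recombination in (iii): the proof of Proposition \ref{propglobal} freely uses (MC3) on intermediate elements such as $S(h_1)$, which in the partial setting need not themselves lie in $H_t$. The strategy will be to verify that each invocation of (i) actually does have its outermost acting element in $H_t$, a piece of delicate bookkeeping at the triple-coproduct level which is the point where the partial nature of the action interacts most intricately with the antipode.
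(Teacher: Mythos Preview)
Your arguments for (i) and (ii) are correct and coincide with the paper's proof up to cosmetic reorderings.

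For (iii) there is a genuine gap, and it is precisely the one you flag yourself. Following Proposition \ref{propglobal} step by step, the crucial move is
\[
\varepsilon(S(h_1)h_2 \cdot c_1)\,\varepsilon(h_3 \cdot c_2) \;=\; \varepsilon\bigl(S(h_1) \cdot (h_2 \cdot c_1)\bigr)\,\varepsilon(h_3 \cdot c_2),
\]
which in the global case uses (MC3) with outer element $S(h_1)$. Your plan is to replace (MC3) by (i), but for $h \in H_t$ one has $S(h_1) = S(1_1 h) = S(h)S(1_1)$, a product of an element of $H_s$ with an element of $H_t$, and there is no reason for this to lie in $H_t$. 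So the ``delicate bookkeeping'' you hope to carry out cannot succeed in the form stated: the hypothesis of (i) genuinely fails at that step.

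The paper's way out is not to force (i) at all, but to notice that (PMC3) itself suffices once the expression has the right shape. Using $h \in H_t$ via \eqref{4.10} gives $h_1 \otimes h_2 \otimes h_3 = 1_1 h \otimes 1_2 \otimes 1_3$, whence
\[
\varepsilon(S(h_1)h_2 \cdot c_1)\,\varepsilon(h_3 \cdot c_2) \;=\; \varepsilon\bigl(S(h)\,\varepsilon_s(1_1) \cdot c_1\bigr)\,\varepsilon(1_2 \cdot c_2) \;=\; \varepsilon\bigl(S(h)1_1 \cdot c_1\bigr)\,\varepsilon(1_2 \cdot c_2),
\]
and now applying raw (PMC3) with outer element $S(h)$ and inner element $1_H$ (no $H_t$-condition needed) collapses this to $\varepsilon(S(h) \cdot 1_H \cdot c) = \varepsilon(S(h) \cdot c)$. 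The conclusion then follows from the identity $\varepsilon_s(h) = S(h)$ for $h \in H_t$, obtained by the same manipulation in reverse (or directly from \eqref{4.35}).
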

\begin{proof}
		Given $h \in H_t$, $k \in H$ and $c \in C$,\\
(i)
	$$\begin{array}{rl}
		h \cdot k \cdot c &= (hk_1 \cdot c_1) \varepsilon(k_2 \cdot c_2)\\
		&\stackrel{(\ref{4.16})}{=} (1_{H_1}hk_1 \cdot c_1) \varepsilon(1_{H_2}k_2 \cdot c_2)\\
		&\stackrel{(\ref{4.10})}{=} (h_1k_1 \cdot c_1) \varepsilon(h_2k_2 \cdot c_2)\\
		&=hk \cdot c
		\end{array}$$
		
(ii)	
		$$\begin{array}{rl}
		\varDelta (h \cdot c)&\stackrel{(\ref{4.11})}{=} (h1_{H_1} \cdot c_1) \otimes (1_{H_2} \cdot c_2) \\
		&\stackrel{(i)}{=} (h \cdot 1_{H_1} \cdot c_1) \otimes (1_{H_2} \cdot c_2) \\
		&\stackrel{(\ref{4.11})}{=} h \cdot c_1 \otimes c_2,
		\end{array}$$

(iii)
	$$\begin{array}{rl}
	\varepsilon(h \cdot c) &= \varepsilon(1_{H_1} \cdot c_1)\varepsilon(h \cdot 1_{H_2} \cdot c_2) \\
	&\stackrel{(ii)}{=} \varepsilon(1_{H_1} \cdot c_1)\varepsilon(h1_{H_2} \cdot c_2) \\
	&\hspace{-0.15cm} \stackrel{(\ref{4.13})}{=} \varepsilon(\varepsilon_s(h_1) \cdot c_1)\varepsilon(h_2 \cdot c_2) \\
	&= \varepsilon(S(h_1)h_2 \cdot c_1)\varepsilon(h_3 \cdot c_2) \\
	&\hspace{-0.15cm} \stackrel{(\ref{4.10})}{=} \varepsilon(S(1_{H_1}h)1_{H_2} \cdot c_1)\varepsilon(1_{H_3} \cdot c_2) \\
	&= \varepsilon(S(h)\varepsilon_s(1_{H_1}) \cdot c_1)\varepsilon(1_{H_2} \cdot c_2) \\
	&\hspace{-0.10cm} \stackrel{(\ref{4.7})}{=} \varepsilon(S(h)1_{H_1} \cdot c_1)\varepsilon(1_{H_2} \cdot c_2) \\
	&\hspace{-0.15cm} \stackrel{(\ref{modulocoalgebraparcial})}{=} \varepsilon(S(h) \cdot 1_H \cdot c) \\
	&= \varepsilon(S(h) \cdot c) \\
	&= \varepsilon(S(h)\varepsilon_s(1_H) \cdot c) \\
	&= \varepsilon(S(h)S(1_{H_1})1_{H_2} \cdot c) \\
	&\stackrel{(\ref{4.10})}{=}\varepsilon( S(h_1)h_2 \cdot c)\\
	&= \varepsilon( \varepsilon_s(h) \cdot c).
	\end{array}$$
\end{proof}

\begin{cor}\label{ht}
	If $H = H_t$, then every partial $H$-module coalgebra is a $H$-module coalgebra.
\end{cor}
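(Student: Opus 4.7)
The plan is to invoke the characterization established just before the statement: a partial $H$-module coalgebra $C$ is a (global) $H$-module coalgebra if and only if $\varepsilon(h \cdot c) = \varepsilon(\varepsilon_s(h) \cdot c)$ for every $h \in H$ and $c \in C$. Under the hypothesis $H = H_t$, every element of $H$ lies in $H_t$, so it suffices to verify this identity for elements of $H_t$.

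But this is precisely item (iii) of Proposition \ref{Ht}, which was proved for arbitrary $h \in H_t$. Hence, for $H = H_t$, the required compatibility $\varepsilon(h \cdot c) = \varepsilon(\varepsilon_s(h) \cdot c)$ holds for every $h \in H$ and every $c \in C$, and the previous proposition delivers the conclusion.

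There is essentially no obstacle here: the corollary is a direct specialization of the two results immediately preceding it. The only thing worth noting in the write-up is that the proof contains no computation of its own; it is purely an application of Proposition \ref{Ht}(iii) followed by the preceding characterization proposition.
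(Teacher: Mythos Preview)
Your proposal is correct and matches the intended reasoning: the paper states the corollary without proof, as an immediate consequence of Proposition~\ref{Ht}. Your route via item~(iii) and the characterization proposition is valid; equally, one could invoke item~(i) of Proposition~\ref{Ht} directly, since $H=H_t$ makes $h\cdot k\cdot c = hk\cdot c$ hold for all $h,k\in H$, giving (MC3) outright, with (MC4) then following from Proposition~\ref{propglobal}.
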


Still analyzing this situation, the next proposition illustrates the case of a sym\-met\-ryc partial $H$-module coalgebra.
\begin{prop}
Suppose that $C$ is a symmetric partial $H$-module coalgebra, then, for all $h \in H_s$, $k \in H$ and $c \in C$ we have:
	\begin{enumerate}
		\item[(i)] $h \cdot k \cdot c=hk \cdot c$;
		\item[(ii)] $\varDelta (h \cdot c) = c_1 \otimes h \cdot c_2.$
	\end{enumerate}	
\end{prop}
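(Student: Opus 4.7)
The plan is to parallel the proof of Proposition \ref{Ht}, but using identity (\ref{4.11}), $\Delta(h) = 1_{H_1} \otimes h1_{H_2}$ for $h \in H_s$, in place of (\ref{4.10}), and invoking the symmetric form of (PMC3) in place of (PMC3) itself. Throughout I use the convention $\Delta(1_H) = 1_{H_1} \otimes 1_{H_2}$.

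For item (i), the starting point is the symmetric condition $h \cdot k \cdot c = \varepsilon(k_1 \cdot c_1)(hk_2 \cdot c_2)$. Rewriting $k_1 \otimes k_2 = 1_{H_1} k_1 \otimes 1_{H_2} k_2$ via (\ref{4.2}), and then recognizing $1_{H_1} \otimes h\, 1_{H_2} = h_1 \otimes h_2$ by (\ref{4.11}) (since $h \in H_s$), the expression becomes $\varepsilon(h_1 k_1 \cdot c_1)(h_2 k_2 \cdot c_2)$. Multiplicativity of $\Delta$ collapses this to $\varepsilon((hk)_1 \cdot c_1)((hk)_2 \cdot c_2)$, and one final application of (PMC2) together with the counit axiom yields $hk \cdot c$.

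For item (ii), I would begin from (PMC2), $\Delta(h \cdot c) = h_1 \cdot c_1 \otimes h_2 \cdot c_2$, and apply (\ref{4.11}) to rewrite this as $1_{H_1} \cdot c_1 \otimes h\, 1_{H_2} \cdot c_2$. Item (i) just proved, applied with $k = 1_{H_2}$, converts $h\, 1_{H_2} \cdot c_2$ into $h \cdot (1_{H_2} \cdot c_2)$, so the expression reads $1_{H_1} \cdot c_1 \otimes h \cdot (1_{H_2} \cdot c_2)$. The conclusion $c_1 \otimes h \cdot c_2$ then follows by applying the $\Bbbk$-linear map $\mathrm{id} \otimes (h \cdot -)$ to the identity $1_{H_1} \cdot c_1 \otimes 1_{H_2} \cdot c_2 = c_1 \otimes c_2$, which itself is the content of $\Delta(1_H \cdot c) = \Delta(c)$ extracted from (PMC1) and (PMC2).

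The most delicate point, exactly as in the proof of Proposition \ref{Ht}(ii), is this last step of (ii): one must read $1_{H_1} \cdot c_1 \otimes 1_{H_2} \cdot c_2 = c_1 \otimes c_2$ as a genuine equality in $C \otimes C$ — the Sweedler sums coincide as elements of the tensor product — so that applying a $\Bbbk$-linear map to one tensor factor is legitimate. Without this interpretation, the index chase would be ill-defined.
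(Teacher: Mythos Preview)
Your proof is correct and follows exactly the approach the paper intends: it is the direct analogue of the proof of Proposition~\ref{Ht}, replacing (PMC3) by its symmetric version and identity~(\ref{4.10}) by~(\ref{4.11}), which is precisely what the paper means by ``analogous.'' As a minor bonus, your argument for (i) avoids the commutation step via~(\ref{4.16}) that the $H_t$ case required, since in $\varepsilon(k_1\cdot c_1)(hk_2\cdot c_2)$ the element $h$ already sits on the correct side to invoke~(\ref{4.11}) directly.
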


\begin{proof} The proof is analogous to that of Proposition \ref{Ht}.
%
%
\end{proof}

Besides that, the condition $\varepsilon (h \cdot c) = \varepsilon (\varepsilon_s(h) \cdot c) $ which characterizes a left  (global) $H$-module coalgebra is naturally satisfied when $ h \in H_s $.
\begin{cor}\label{hs}
	If $H = H_s$, then every partial $H$-module coalgebra is a $H$-module coalgebra.
\end{cor}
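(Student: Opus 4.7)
The plan is to reduce the corollary directly to the characterization proposition stated just before Proposition \ref{Ht}, namely: a partial $H$-module coalgebra $C$ is a (global) $H$-module coalgebra if and only if $\varepsilon(h\cdot c) = \varepsilon(\varepsilon_s(h)\cdot c)$ for every $h\in H$ and $c\in C$. So under the hypothesis $H=H_s$, I would only need to verify that this condition holds automatically for every $h\in H$.

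The key observation is that $\varepsilon_s$ acts as the identity on its own image $H_s$. Indeed, if $h\in H_s=\varepsilon_s(H)$, write $h=\varepsilon_s(k)$ for some $k\in H$; then by the idempotency relation \eqref{4.4} one has
\begin{equation*}
\varepsilon_s(h)=\varepsilon_s(\varepsilon_s(k))=\varepsilon_s(k)=h.
\end{equation*}
Hence, assuming $H=H_s$, every element $h\in H$ satisfies $\varepsilon_s(h)=h$, so the required equality $\varepsilon(h\cdot c)=\varepsilon(\varepsilon_s(h)\cdot c)$ is trivially true for all $h\in H$ and $c\in C$. Applying the characterization proposition, $C$ is a (global) $H$-module coalgebra.

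There is essentially no obstacle here: the proof is a one-line consequence of the already established characterization together with the idempotency \eqref{4.4} of $\varepsilon_s$. In particular, no symmetry hypothesis on the partial action is needed, in contrast with the preceding proposition; the argument proceeds entirely at the level of the axiom (PMC3), invoked through the characterization.
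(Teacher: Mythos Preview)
Your proof is correct and follows essentially the same approach as the paper, which notes just before the corollary that the characterizing condition $\varepsilon(h\cdot c)=\varepsilon(\varepsilon_s(h)\cdot c)$ is ``naturally satisfied when $h\in H_s$'' (precisely because $\varepsilon_s$ is idempotent by \eqref{4.4}). Your observation that no symmetry hypothesis is needed is also in agreement with the paper's statement of the corollary.
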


By Corolary \ref{ht} and \ref{hs} it follows that the weak Hopf algebra $\Bbbk G$ presented in Example \ref{exemplodogrupo} always acts globally on a coalgebra.

\subsection{Characterizing Actions via $\lambda$}
\quad \
The purpose of this section is to explore a specific family of examples of partial module coalgebra. To do, consider $\lambda \in Hom_\Bbbk(H,\Bbbk)$. 

We say that $C$ is a \textit{$H$-module coalgebra via $\lambda$} if
$$\begin{array}{rl}
\vartriangleright: \ H \otimes C &\rightarrow C \\
h \otimes c &\mapsto \lambda(h) c
\end{array}$$
defines an action of a weak Hopf algebra $H$ on the coalgebra $C$.

\begin{prop}\label{lambdaglobal}
	$C$ is a $H$-module coalgebra via $\lambda$ if and only if
	\begin{enumerate}
		\item [(i)] $\lambda(1_H) = 1_\Bbbk$;
		\item [(ii)] $\lambda(h) = \lambda(h_1)\lambda(h_2)$;
		\item [(iii)] $\lambda(h)\lambda(k) = \lambda(hk)$,
	\end{enumerate}
	for all $h,k \in H$.
\end{prop}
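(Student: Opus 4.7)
The plan is to verify both implications by direct substitution into the four conditions (MC1)--(MC4) of Definition \ref{bohmincio}, relying on Proposition \ref{propglobal} to eliminate (MC4). Throughout I tacitly use that $C$ is nonzero, so that there exists $c\in C$ with $\Delta(c)\neq 0$ (this follows from $(\varepsilon\otimes\mathrm{id})\Delta(c)=c$).

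First I would handle (MC1) and (MC3), which are essentially immediate. Condition (MC1) reads $\lambda(1_H)c=c$ for every $c\in C$, which is equivalent to (i). Condition (MC3) reads
\[
\lambda(h)\lambda(k)\,c \;=\; \lambda(hk)\,c
\]
for all $h,k\in H$ and $c\in C$, which is equivalent to (iii). Both equivalences use only that $C\neq 0$, to extract the scalar identity.

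Next I would treat (MC2), which is the only slightly subtle point. Since $\lambda(h)\in\Bbbk$, the left-hand side equals $\lambda(h)\Delta(c)$, while the right-hand side is
\[
(h_1\vartriangleright c_1)\otimes(h_2\vartriangleright c_2)\;=\;\lambda(h_1)\lambda(h_2)\,(c_1\otimes c_2)\;=\;\lambda(h_1)\lambda(h_2)\,\Delta(c),
\]
with summation over the Sweedler indices of $h$. Therefore (MC2) is equivalent to the scalar identity $\lambda(h)=\lambda(h_1)\lambda(h_2)$, where the forward implication uses again the existence of $c$ with $\Delta(c)\neq 0$; this is precisely (ii).

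Finally, (MC4) need not be checked separately: by Proposition \ref{propglobal}, once (MC1)--(MC3) hold, (MC4) is automatic. Putting these three equivalences together establishes the biconditional. I do not expect any genuine obstacle here; the only point requiring a moment of care is making sure one may actually pass from the identity $\lambda(h)\Delta(c)=\lambda(h_1)\lambda(h_2)\Delta(c)$ (valid for all $c$) to the scalar identity (ii), which is why the nontriviality of $C$ is worth recording at the start of the argument.
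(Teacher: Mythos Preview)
Your proposal is correct and follows exactly the approach implicit in the paper: the paper's proof is the single line ``It follows by Definition \ref{bohmincio},'' and you have simply unpacked that line by matching (MC1)--(MC3) to (i)--(iii) and invoking Proposition \ref{propglobal} for (MC4). Your remark that $C\neq 0$ is needed to extract the scalar identities in the ``only if'' direction is a worthwhile clarification that the paper leaves tacit.
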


\begin{proof}
It follows by Definition \ref{bohmincio}.
\end{proof}

By Proposition \ref{propglobal} if $C$ is a $H$-module coalgebra via $\lambda$, then, $ \lambda $ also satisfies
$\lambda(h) = \lambda(\varepsilon_s(h))$, for all $h \in H$. Besides this, note that if we take $\varepsilon$ in place of $\lambda$, we have that $C$ is a $H$-module coalgebra via $ \varepsilon $ if and only if $H$ is an Hopf algebra in the usual sense.
\begin{ex}

(i) Let $\mathcal{G}$ be a groupoid given by the disjoint union of finite groups $G_1,...,G_n$ and fix $G_j$ one of them. Considering $\Bbbk \mathcal{G}$ the groupoid algebra, we can  define $\lambda \in Hom_\Bbbk(\Bbbk \mathcal{G}, \Bbbk)$ as follows
		$$\lambda(\delta_g)=\left \{ \begin{array}{rl}  1, \ if \ g \in G_j \\
		0, if \ g \notin G_j.
		\end{array}
		\right. $$
Then, $\lambda$ satisfies the conditions of Proposition \ref{lambdaglobal}.

(ii) Let $\Bbbk G$ the weak Hopf algebra given in Example \ref{exemplodogrupo}.  We can  define $\lambda \in$ $Hom_\Bbbk(\Bbbk {G}, \Bbbk)$ as follows
$$\lambda(g) = 1, \mbox{ for all $g \in G$.} $$
Then, $\lambda$ satisfies the conditions of Proposition \ref{lambdaglobal}.
	
 \end{ex}

\vu

\quad

In the partial case, we say that $C$ is a \textit{partial $H$-module coalgebra via $\lambda$} if
$$\begin{array}{rl}
\cdot: \ H \otimes C &\rightarrow C \\
h \otimes c &\mapsto \lambda(h) c
\end{array}$$
defines a partial action of $ H $ on the coalgebra $ C $.

\begin{prop}\label{lambdaparcial}
	$C$ is a partial $H$-module coalgebra via $\lambda$ if and only if
	\begin{enumerate}
		\item [(i)] $\lambda(1_H) = 1_\Bbbk$;
		\item [(ii)] $\lambda(h)\lambda(k) =\lambda(hk_1)\lambda(k_2),$
	\end{enumerate}
	for all $h,k \in H$.
\end{prop}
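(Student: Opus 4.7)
The plan is to verify directly the three defining conditions (PMC1), (PMC2), (PMC3) of Definition \ref{modulocoalgebraparcial} for the prescribed action $h \cdot c = \lambda(h)c$, translating each into a scalar identity on $\lambda$. The key observation that makes this routine is that $k \cdot c_2$ contributes a scalar $\lambda(k_2)c_2$ inside $\varepsilon$, so by the counit axiom every vector identity collapses to a scalar identity times $c$ (or times $c_1 \otimes c_2$, which by $(\varepsilon \otimes \mathrm{id})\Delta = \mathrm{id}$ still yields the scalar identity whenever $C \neq 0$).

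For the direction ($\Rightarrow$), assume $C$ is a partial $H$-module coalgebra via $\lambda$ and fix any nonzero $c \in C$. Condition (PMC1) reads $\lambda(1_H)c = c$, yielding (i). Condition (PMC3) reads
\[
\lambda(h)\lambda(k)\,c \;=\; \lambda(hk_1)\lambda(k_2)\varepsilon(c_2)\,c_1 \;=\; \lambda(hk_1)\lambda(k_2)\,c,
\]
and since $c \neq 0$ this gives (ii).

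For the direction ($\Leftarrow$), assume (i) and (ii). First I would extract the auxiliary identity $\lambda(k) = \lambda(k_1)\lambda(k_2)$ for all $k \in H$ by setting $h = 1_H$ in (ii) and invoking (i). With this in hand, (PMC1) follows at once from (i); (PMC2) follows from
\[
\Delta(h \cdot c) \;=\; \lambda(h)\,c_1 \otimes c_2 \;=\; \lambda(h_1)\lambda(h_2)\,c_1 \otimes c_2 \;=\; (h_1 \cdot c_1)\otimes(h_2 \cdot c_2);
\]
and (PMC3) is a direct rewriting of (ii) multiplied by $c$, using $\varepsilon(k_2 \cdot c_2) = \lambda(k_2)\varepsilon(c_2)$ to absorb the counit.

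The only subtle point (not really an obstacle) is that (PMC2) requires the global-looking identity $\lambda = \lambda \ast \lambda$ in the convolution sense, which is not listed among (i), (ii); the trick is to recover it by specializing (ii) at $h = 1_H$. Apart from this small observation, the proof is a pure transcription of the axioms, and the standing assumption $C \neq 0$ is implicit throughout in order to pass from vector identities to scalar identities on $\lambda$.
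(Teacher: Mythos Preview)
Your proof is correct and follows the same approach as the paper, which simply says ``It follows by Definition \ref{modulocoalgebraparcial}.'' You have spelled out what that one-liner means, and in particular you correctly identify the one point that is not entirely mechanical: the convolution identity $\lambda(h)=\lambda(h_1)\lambda(h_2)$ needed for (PMC2) is not among the listed hypotheses (i), (ii), but is recovered by specializing (ii) at $h=1_H$ and using (i). This is exactly why the proposition lists only two conditions rather than the three appearing in the global analogue (Proposition \ref{lambdaglobal}).
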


\begin{proof}
	It follows by Definition \ref{modulocoalgebraparcial}.
\end{proof}

Besides that, notice that the partial action of $H$ on $C$ is symmetric if and only if $\lambda$ satisfies additionally that $\lambda(h)\lambda(k) = \lambda(k_1)\lambda(hk_2)$, for all $h,k \in H$.

\begin{obs}
 If $\lambda $ characterizes a partial action of a weak Hopf algebra $ H $ on a coalgebra $C$, then, $\lambda $ satisfies $ \lambda(h) = \lambda(\varepsilon_s( h)), $ for all $ h \in H $, if and only if $ C $ is a $ H $-module coalgebra via $\lambda$.
\end{obs}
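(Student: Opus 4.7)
The plan is to reduce the equivalence to the characterization of when a partial module coalgebra is a global one, which was already established in the proposition just after Example \ref{exemplossemlambida} (the one stating that a partial $H$-module coalgebra is a $H$-module coalgebra iff $\varepsilon(h \cdot c) = \varepsilon(\varepsilon_s(h) \cdot c)$ for all $h \in H$, $c \in C$). The key observation is that for the action $h \cdot c = \lambda(h) c$, linearity of $\varepsilon$ immediately yields $\varepsilon(h \cdot c) = \lambda(h)\varepsilon(c)$ and $\varepsilon(\varepsilon_s(h) \cdot c) = \lambda(\varepsilon_s(h))\varepsilon(c)$, so the $\varepsilon$-condition converts directly to an equation between scalars.

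First I would handle the implication $(\Rightarrow)$: assuming $\lambda(h) = \lambda(\varepsilon_s(h))$ for every $h \in H$, I get $\varepsilon(h\cdot c) = \lambda(h)\varepsilon(c) = \lambda(\varepsilon_s(h))\varepsilon(c) = \varepsilon(\varepsilon_s(h)\cdot c)$ for all $h$ and $c$. Applying the aforementioned characterization, the partial action defined by $\lambda$ is actually global, so $C$ is a $H$-module coalgebra via $\lambda$.

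For the converse $(\Leftarrow)$, assume $C$ is a $H$-module coalgebra via $\lambda$. By Proposition \ref{propglobal}, condition (MC4) holds, which in this setting reads $\lambda(h)\varepsilon(c) = \lambda(\varepsilon_s(h))\varepsilon(c)$ for all $h \in H$ and $c \in C$. Since any nonzero coalgebra has a nonzero counit (any $c \in C$ satisfies $c = \varepsilon(c_1)c_2$, so $\varepsilon \equiv 0$ would force $C = 0$), there exists $c_0 \in C$ with $\varepsilon(c_0) \neq 0 \in \Bbbk$, and dividing by $\varepsilon(c_0)$ yields $\lambda(h) = \lambda(\varepsilon_s(h))$.

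The only potential obstacle is the trivial case $C = 0$, in which the backward direction fails to extract any information about $\lambda$; under this convention the statement is understood with the implicit assumption $C \neq 0$, which is standard throughout the paper. Apart from this edge case, the argument is a direct translation of the already-proved characterization via the defining formula $h \cdot c = \lambda(h)c$, and no further technical work is needed.
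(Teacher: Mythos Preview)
Your argument is correct and is exactly the intended justification. In the paper the statement is given as a Remark without proof; it is meant to follow immediately from the characterization ``partial $H$-module coalgebra is global iff $\varepsilon(h\cdot c)=\varepsilon(\varepsilon_s(h)\cdot c)$'' specialized to $h\cdot c=\lambda(h)c$, which is precisely what you do, and your handling of the edge case $C=0$ is appropriate.
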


\begin{ex}

(i)  \label{exlambda} Consider $ \mathcal{G} $ a groupoid and $e$ an element in $\mathcal{G}_0$.Then  $\mathcal{G}_e $ defines a partial action of $ \Bbbk \mathcal{G} $ on a coalgebra $ C $ via $ \lambda $. It is enough to see that
	$$ \lambda(\delta_g) =\left \{ \begin{array}{rl}  1, \ if \ g \in \mathcal{G}_e \\
	0, if \ g \notin \mathcal{G}_e.
	\end{array}
	\right. $$
	satisfies the conditions of Proposition \ref{lambdaparcial}.
	 
(ii) \cite{Felipeweak} \label{exlambda2} Consider $ \mathcal{G} $ the groupoid given by the disjoint union of the groups $ G_1, ..., G_n $, and fix $ G_j $ one of these groups. Then any subgroup $ V $ of $ G_j $ defines a partial action of $ \Bbbk \mathcal{G} $ on a coalgebra $ C $ via $ \lambda $, it is enough to see that
$$ \lambda(\delta_g) =\left \{ \begin{array}{rl}  1, \ if \ g \in V \\
0, if \ g \notin V.
\end{array}
\right. $$
satisfies the conditions of Proposition \ref{lambdaparcial}.	

\end{ex}

As in \cite{Felipeweak}, we can also characterize the weak Hopf algebra action of $ \Bbbk \mathcal{G} $ on $\Bbbk $, where $ \mathcal{G}_0 $ is finite and $\Bbbk$ is seen as a coalgebra.
\begin{prop}The ground field $\Bbbk$ is a partial $\Bbbk \mathcal{G}$-module coalgebra via $\lambda$ if and only if $V=\{g \in \mathcal{G} ; \ \delta_g \cdot 1_{\Bbbk} = 1_{\Bbbk}= \delta_{d(g)}\cdot 1_{\Bbbk}\}$ is a group.
\end{prop}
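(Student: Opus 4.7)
My approach is to unfold the partial module coalgebra axioms in the case $C=\Bbbk$ to obtain explicit constraints on $\lambda$, and then to translate these into the group-theoretic statement about $V$. For $C = \Bbbk$ one has $\Delta(1_\Bbbk) = 1_\Bbbk \otimes 1_\Bbbk$ and $\varepsilon(1_\Bbbk) = 1_\Bbbk$; writing $\lambda_g := \lambda(\delta_g)$ and using $\Delta(\delta_h) = \delta_h \otimes \delta_h$, the three axioms (PMC1)--(PMC3) translate respectively to: $\sum_{e \in \mathcal{G}_0} \lambda_e = 1_\Bbbk$; $\lambda_g = \lambda_g^2$, so $\lambda_g \in \{0,1\}$ for every $g$; and $\lambda_g \lambda_h = \lambda_{gh}\lambda_h$ whenever $\exists gh$, while $\lambda_g \lambda_h = 0$ otherwise. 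Applying the last condition to two distinct elements $e \ne e'$ of $\mathcal{G}_0$ (which are never composable) already yields $\lambda_e \lambda_{e'} = 0$, so combined with the first two conditions exactly one element $e_0 \in \mathcal{G}_0$ satisfies $\lambda_{e_0}=1$.

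For the forward direction, assuming $\lambda$ defines a partial action, (PMC2) gives $\lambda = \chi_W$ with $W := \lambda^{-1}(1)$. For any $g \in W$, applying (PMC3) to $(g,g)$ forces $d(g)=r(g)$ (else $\lambda_g^2 = 0$), so $g$ lies in some isotropy group, and applying (PMC3) to $(r(g),g)$ yields $\lambda_{r(g)} = 1$. Hence $r(g) \in W \cap \mathcal{G}_0 = \{e_0\}$, so $W \subseteq \mathcal{G}_{e_0}$; in particular $d(g)=e_0 \in W$ for all $g \in W$, giving $V=W$. Closure of $V$ under multiplication then follows from (PMC3) on a composable pair $g,h \in V$ (yielding $\lambda_{gh}=1$), and closure under inverses from (PMC3) on $(g^{-1},g)$, which gives $\lambda_{g^{-1}} = \lambda_{e_0} = 1$; since $e_0 \in V$, we conclude that $V$ is a subgroup of $\mathcal{G}_{e_0}$, hence a group.

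For the converse, given a group $V \subseteq \mathcal{G}$, its identity $v_0$ is idempotent in the groupoid, so $v_0 \in \mathcal{G}_0$ and $V$ is a subgroup of the isotropy group $\mathcal{G}_{v_0}$. I would then set $\lambda := \chi_V$ and check the three translated axioms: (PMC1) from $V \cap \mathcal{G}_0 = \{v_0\}$; (PMC2) is immediate; and (PMC3) by a short case analysis --- when $g, h \in V$ use that $gh$ is defined and lies in $V$, when $h \in V$ but $g \notin V$ use the contrapositive of closure under inverses (if $gh \in V$ and $h \in V$ then $g=(gh)h^{-1}\in V$) to get $\lambda_{gh}=0$, and otherwise both sides vanish. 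The step I expect to be most delicate is the forward argument that $W$ sits inside a single isotropy group, since it requires interleaving (PMC1), (PMC2), and two distinct uses of (PMC3) in the right order --- the diagonal $(g,g)$ to confine $g$ to an isotropy piece, and $(r(g), g)$ to identify the base identity.
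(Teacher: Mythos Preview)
Your proof is correct and follows essentially the same strategy as the paper: reduce the axioms to the idempotency condition $\lambda_g\in\{0,1\}$ and the multiplicative relation $\lambda_g\lambda_h=\lambda_{gh}\lambda_h$ (with the vanishing when $gh$ is undefined), and then read off the group structure of $V$.

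The organization differs slightly. You first isolate the unique identity $e_0$ with $\lambda_{e_0}=1$ by pairing distinct identities in (PMC3), then use the diagonal instance $(g,g)$ to force $d(g)=r(g)$ and the instance $(r(g),g)$ to pin $r(g)=e_0$; only after establishing $W=\lambda^{-1}(1)\subseteq\mathcal{G}_{e_0}$ and $W=V$ do you verify closure and inverses. The paper instead works directly with $V$ as defined (so the hypothesis $\lambda_{d(g)}=1$ is already available), checks non-emptiness, closure and inverses first, and only afterwards shows that all elements of $V$ share the same source and range via the existence of $gh^{-1}$. Your front-loading of the ``single isotropy piece'' step is a bit cleaner, and your converse is more detailed than the paper's ``immediate'', but the underlying computations are the same.
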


\begin{proof}
	Suppose that $\Bbbk$ is a partial $\Bbbk \mathcal{G}$-module coalgebra via $\delta_g \cdot 1_{\Bbbk} = \lambda(\delta_g ).$ Then,
	\begin{eqnarray*}
		(\delta_g \cdot 1_{\Bbbk})&=& (1_{\Bbbk \mathcal{G}}\cdot 1_{\Bbbk})(\delta_g \cdot 1_{\Bbbk})\\
		&=&\sum_{e \in \mathcal{G}_0} (\delta_e \cdot 1_{\Bbbk})(\delta_g \cdot 1_{\Bbbk})\\
		&\stackrel{\ref{lambdaparcial}}{=}&\sum_{e \in \mathcal{G}_{0}}(\delta_e \delta_g \cdot 1_{\Bbbk})(\delta_g \cdot 1_{\Bbbk})\\
		&=&(\delta_g \cdot 1_{\Bbbk})^2.
	\end{eqnarray*}
	
Since $\Bbbk$ is a field, $(\delta_g \cdot 1_{\Bbbk})=1$ or $(\delta_g \cdot 1_{\Bbbk})=0$. Now we proceed to prove that $V$ is a group. Indeed:

(i) $V\neq\varnothing$, because $\lambda(1_{\Bbbk \mathcal{G}})=1_{\Bbbk}$.

(ii) If $g,h \in V$, then $gh \in V$.

Let $g,h \in V$, then $1_{\Bbbk}=\lambda(\delta_g)\lambda(\delta_h) =\lambda(\delta_g \delta_h)\lambda(\delta_h) =\lambda(\delta_{g}\delta_h).$
Therefore, there is $gh$, otherwise $\lambda(\delta_{g}\delta_h)=\lambda(0)=0$. Thus, $\lambda(\delta_{gh})=\lambda(\delta_g\delta_h)=1_{\Bbbk}$, that is, $gh \in V$.

(iii)If $g \in V$, we have necessarily that $g^{-1} \in V$. 

For this, notice that $1_{\Bbbk}=\lambda(\delta_{d(g)})\lambda(\delta_g)	=\lambda(\delta_{g^{-1}}\delta_g)\lambda(\delta_g) =\lambda(\delta_{g^{-1}})\lambda(\delta_g) \stackrel{g \in V}{=}\lambda(\delta_{g^{-1}}).$

(iv) For all $h,g \in V$, $d(g)=r(g)=d(h)$ . 

Given $g,h \in V$, we already know that there exists $gh^{-1}$, then, we have that
\begin{eqnarray}\label{oi}
d(g)=r(h^{-1})=h^{-1}h=d(h).
\end{eqnarray}
Since (\ref{oi}) goes for all $h \in V$, in particular, for $h=g^{-1}$,  $d(g)=r(g)$. 

What shows that $V$ is a group. The converse is immediate.
\end{proof}

Similarly, there is an analogous result when it is considered the action of the weak Hopf algebra $ (\Bbbk \mathcal{G})^*$ on the field $ \Bbbk $, for $ \mathcal{G} $ a finite groupoid and $ \Bbbk $ is seen as a coalgebra.
\begin{prop}
The ground field $\Bbbk$ is a partial $(\Bbbk \mathcal{G})^*$-module coalgebra via $\lambda$ if and only if $V=\{g \in \mathcal{G} ; \ p_g \cdot 1_{\Bbbk} \neq 0 \mbox{ e } p_{g^{-1}}\cdot 1_{\Bbbk}\ \neq 0\}$ is a group and the characteristic of $\Bbbk$ does not divide the cardinality of $V$. In this case, the action is defined by
	$$\lambda(p_g)=\left\{ \begin{array}{cc}
	\dfrac{1}{|V|}, & \mbox{ if $g \in V$}\\
	0, & \mbox{ otherwise}
	\end{array}
	\right..$$
\end{prop}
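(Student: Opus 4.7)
The plan is to translate the axioms of Proposition~\ref{lambdaparcial} into concrete equations on the scalars $\alpha_g := \lambda(p_g)$ and then extract the structure of $V$ from the support of $\alpha$. Since $p_a * p_c = \delta_{a,c}\,p_a$ and $\Delta(p_b) = \sum_{c\,:\,\exists c^{-1}b} p_c \otimes p_{c^{-1}b}$, axiom (i) becomes $\sum_{g\in\mathcal{G}} \alpha_g = 1$ and axiom (ii), evaluated on basis elements, collapses to
\begin{equation*}
\alpha_a\alpha_b \;=\; \begin{cases} \alpha_a\,\alpha_{a^{-1}b}, & \text{if } r(a)=r(b),\\[2pt] 0, & \text{otherwise.}\end{cases}
\end{equation*}

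For the $(\Leftarrow)$ direction I would take $V$ to be a subgroup of some isotropy group $\mathcal{G}_e$ with $|V|$ invertible in $\Bbbk$, put $\alpha_g = |V|^{-1}$ on $V$ and $0$ elsewhere, and verify the two equations directly. The sum condition is immediate, and the multiplicative one splits by whether $a$ and $b$ belong to $V$; each case is a short check, the only nontrivial observation being that $a\in V$ together with $a^{-1}b\in V$ would force $b\in V$, so that if $a\in V$ but $b\notin V$ then both sides of the equation vanish.

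For the $(\Rightarrow)$ direction I would proceed in four steps, working only from the displayed identity. (1) Setting $a=b$ gives $\alpha_a^2=\alpha_a\alpha_{d(a)}$, so $\alpha_g\neq 0$ implies $\alpha_g=\alpha_{d(g)}\neq 0$. (2) Nonvanishing of both factors on the left forces $r(a)=r(b)$, and combined with (1) this pins down a single $e\in\mathcal{G}_0$ such that every $g$ with $\alpha_g\neq 0$ satisfies $d(g)=r(g)=e$; applying the identity with $b=a$ then shows $\alpha$ is constant equal to $\alpha_e\neq 0$ on its support $V_0:=\{g:\alpha_g\neq 0\}$. (3) Plugging $(a,b)=(g,e)$ for $g\in V_0$ yields $\alpha_{g^{-1}}=\alpha_e\neq 0$, so $V_0=V$; plugging $(a,b)=(g^{-1},h)$ for $g,h\in V_0$ yields $\alpha_{gh}=\alpha_h\neq 0$, showing $V$ is closed under multiplication and hence a subgroup of $\mathcal{G}_e$. (4) Axiom (i) now reads $|V|\alpha_e=1$, forcing $|V|$ invertible in $\Bbbk$ and the stated formula $\lambda(p_g)=|V|^{-1}$ on $V$.

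The main obstacle is step (3): a priori the functional equation provides only a one-sided, left-translation-type relation, so I must verify that $V_0$ is automatically closed under inversion before I can call it a group. The trick is to feed the distinguished identity $e$—whose nonvanishing was secured in step (2)—into the identity as the second argument, which symmetrizes the relation and produces $\alpha_{g^{-1}}=\alpha_e$ at once.
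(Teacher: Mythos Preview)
Your proposal is correct. The $(\Leftarrow)$ direction is precisely the computation the paper carries out: it verifies $\lambda(1_{(\Bbbk\mathcal{G})^*})=1_\Bbbk$ and the identity $\lambda(p_g)\lambda(p_h)=\lambda(p_g(p_h)_1)\lambda((p_h)_2)$ by the same case split on whether $g,h\in V$, arriving at $|V|^{-2}$ on both sides in the nontrivial case. For the $(\Rightarrow)$ direction the paper simply writes ``The converse is immediate'' and gives no argument at all; your four-step analysis---extracting $\alpha_g=\alpha_{d(g)}$, locating the common identity $e$, forcing closure under inversion via the substitution $(a,b)=(g,e)$, and reading off $|V|\alpha_e=1$ from axiom~(i)---supplies exactly what the paper omits, and the delicate point you flag (that the functional equation is a~priori only one-sided) is a genuine subtlety that deserves the explicit treatment you give it.
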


\begin{proof}

	We will show that $\Bbbk$ is a partial $(\Bbbk \mathcal{G})^{*}$-module coalgebra via $\lambda$. To do this notice that

(i) $\lambda(1_{(\Bbbk \mathcal{G})^{*}})= 1_{\Bbbk}$. Indeed,
		\begin{eqnarray*}
			\lambda(1_{(\Bbbk \mathcal{G})^{*}})
			=\sum_{g \in \mathcal{G}} \lambda( p_g)
			= |V| \dfrac{1}{|V|}
			= 1_{\Bbbk}.
		\end{eqnarray*}	
		
	(ii) $\lambda(p_g)\lambda({p_h}) = \lambda(p_g (p_h)_1)\lambda((p_h)_2)$. Indeed, 
		\begin{eqnarray*}
			\lambda(p_g)\lambda({p_h})=\left\{
			\begin{array}{rl}
				\dfrac{1}{|V|^2}	, & \text{if $g,h \in V$ }\\
				0, & \text{ otherwise }
			\end{array} \right..
		\end{eqnarray*}	
		
		On the other hand
		\begin{eqnarray*}
			\lambda(p_g (p_h)_1)\lambda((p_h)_2)&=& \sum_{l \in \mathcal{G}, \exists l^{-1}h} \lambda (p_g p_l) \lambda(p_{l^{-1}h})\\
			&=& \sum_{l \in \mathcal{G}, \exists l^{-1}h} \delta_{g,l}\lambda (p_g) \lambda (p_{l^{-1}h})\\
			&=&\lambda (p_g) \lambda (p_{g^{-1}h})\\
			&=&\left\{
			\begin{array}{rl}
				\dfrac{1}{|V|^2}	, & \text{if $g,h \in V$ }\\
				0, & \text{ otherwise }
			\end{array} \right.,
		\end{eqnarray*}	
	
	where $\delta_{g,l} = 1_\Bbbk$ if $g=l$ and $\delta_{g,l} =0$ if $g\neq l$.
	
	The converse is immediate.
	\end{proof}

\subsection{Induced Partial Action for Module Coalgebra} \label{induzida}
\quad \
A pertinent question is whether a partial action of a weak Hopf algebra  $H$ on a coalgebra can be constructed  from a $H$-module coalgebra. That is, given a global action of $H$ on a coalgebra $C$, is it possible to construct a partial action, induced by the global one, of $H$ on a subcoalgebra of $C$? The purpose of this section is to characterize the necessary and sufficient conditions for this to occur, and to investigate when this constructed action is a global one.

Suppose that $H$ acts on a coalgebra $C$ via
$$\begin{array}{rl}
\vartriangleright: H \otimes C &\rightarrow C \\
h \otimes c &\mapsto h \vartriangleright c
\end{array}$$
and consider $D \subseteq C$ a subcoalgebra of $C$ such that there is a linear projection $\pi: C \rightarrow C$ that satisfies $\pi(C) = D.$ Under these conditions we have the following result.
\begin{prop}\label{acaoinduzida}
	$D$ is a partial $H$-module coalgebra via
	$$\begin{array}{rl}
	\cdot: \ H \otimes D &\rightarrow D \\
	h \otimes d &\mapsto \pi(h \vartriangleright d)
	\end{array}$$
	if and only if the projection $\pi$ satisfies
	\begin{enumerate}
		\item[(i)] $(\pi \otimes \pi) \varDelta(h \vartriangleright d) = \varDelta(\pi(h \vartriangleright d))$;
		\item[(ii)]$
		\pi(h \vartriangleright \pi(k \vartriangleright d)) =\varepsilon(\pi(k_2 \vartriangleright d_2)) \pi (h k_1 \vartriangleright d_1),$
		\end{enumerate}
for all $h,k \in H$ and $d \in D.$	In addition, the symmetric condition is equivalent to $\pi(h \vartriangleright \pi(k \vartriangleright d))= \varepsilon(\pi(k_1 \vartriangleright d_1)) \pi (h k_2 \vartriangleright d_2)$, for all $h,k \in H$ and $d \in D.$
\end{prop}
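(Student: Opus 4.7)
The plan is a straightforward axiom-by-axiom verification in both directions, using only that $\pi$ is a projection (so $\pi|_D = \mathrm{id}_D$) and that $D$ is a subcoalgebra (so $\Delta(d) = d_1 \otimes d_2 \in D \otimes D$, and therefore $\Delta(h \vartriangleright d) = h_1 \vartriangleright d_1 \otimes h_2 \vartriangleright d_2$ makes sense with $d_i \in D$).

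For the forward direction, I would assume (i) and (ii) on $\pi$ and check the three axioms of a partial $H$-module coalgebra for $h \cdot d := \pi(h \vartriangleright d)$. Axiom (PMC1) is immediate: $1_H \cdot d = \pi(1_H \vartriangleright d) = \pi(d) = d$, using (MC1) and $\pi|_D = \mathrm{id}_D$. Axiom (PMC2) is exactly the content of hypothesis (i) combined with (MC2):
\begin{align*}
\Delta(h \cdot d) = \Delta(\pi(h \vartriangleright d)) \stackrel{(i)}{=} (\pi \otimes \pi)\Delta(h \vartriangleright d) \stackrel{(MC2)}{=} \pi(h_1 \vartriangleright d_1) \otimes \pi(h_2 \vartriangleright d_2) = h_1 \cdot d_1 \otimes h_2 \cdot d_2.
\end{align*}
Axiom (PMC3) is essentially the statement of hypothesis (ii):
\begin{align*}
h \cdot k \cdot d = h \cdot \pi(k \vartriangleright d) = \pi(h \vartriangleright \pi(k \vartriangleright d)) \stackrel{(ii)}{=} \varepsilon(\pi(k_2 \vartriangleright d_2))\pi(hk_1 \vartriangleright d_1) = (hk_1 \cdot d_1)\varepsilon(k_2 \cdot d_2).
\end{align*}

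For the converse, I would assume $D$ is a partial $H$-module coalgebra via the given formula and simply read the two displayed computations backwards. Condition (i) is obtained by expanding $\Delta(\pi(h \vartriangleright d)) = \Delta(h \cdot d)$ using (PMC2) and then re-folding with $h_i \cdot d_i = \pi(h_i \vartriangleright d_i)$; condition (ii) is obtained by expanding $\pi(h \vartriangleright \pi(k \vartriangleright d)) = h \cdot (k \cdot d)$ using (PMC3) and then re-folding analogously. The symmetric addendum is proved identically by replacing (PMC3) with the symmetric identity $h \cdot k \cdot d = \varepsilon(k_1 \cdot d_1)(hk_2 \cdot d_2)$ in the last computation.

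There is no real obstacle here: the statement is essentially a translation of the partial module coalgebra axioms along the formula $h \cdot d = \pi(h \vartriangleright d)$, with (PMC1) absorbed into $\pi$ being a projection. The only mild subtlety to be explicit about is that $\Delta(d) \in D \otimes D$ so that the right-hand sides of (i) and (ii) actually land in $D \otimes D$ and $D$ respectively, and that $\pi|_D = \mathrm{id}_D$ is needed both for (PMC1) and to identify $h \cdot (k \cdot d)$ with $\pi(h \vartriangleright \pi(k \vartriangleright d))$ in (PMC3).
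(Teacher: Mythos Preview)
Your proposal is correct and follows essentially the same route as the paper's proof: a direct translation of the axioms (PMC1)--(PMC3) through the formula $h\cdot d=\pi(h\vartriangleright d)$, with (PMC1) handled by $\pi|_D=\mathrm{id}_D$. The only difference is cosmetic: the paper dismisses the implication ``partial module coalgebra $\Rightarrow$ (i), (ii)'' as ``easy to see'' and spells out the other direction, whereas you write out both directions and flag the subcoalgebra/projection hypotheses explicitly.
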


\begin{proof}
If $D$ is a partial $H$-module coalgebra via $(h \cdot d) = \pi(h \vartriangleright d)$, by Definition \ref{modulocoalgebraparcial}, it is easy to see that $\pi$ satisfies the above conditions.

Conversely, suppose that $\pi$ satisfies these two conditions, then, for all $d \in D$ and $h,k \in H$,
	
	\begin{enumerate}
		\item [(i)] $	1_H \cdot d = \pi(1_H \vartriangleright d) = \pi(d) =d.$
		\item [(ii)] $\varDelta(h \cdot d)= \varDelta(\pi(h \vartriangleright d))
		=(\pi \otimes \pi) \varDelta(h \vartriangleright d)
		= h_1 \cdot d_1 \otimes h_2 \cdot d_2.$
		\item [(iii)]$	h \cdot k \cdot d =\pi(h \vartriangleright \pi(k \vartriangleright d))
		=\varepsilon(\pi(k_2 \vartriangleright d_2)) \pi (h k_1 \vartriangleright d_1)
	=(hk_1 \cdot d_1) \varepsilon(k_2 \cdot d_2).$
		\item [(iv)] To show the symmetry condition it is enough to see that 
$$h \cdot k \cdot d =\pi(h \vartriangleright \pi(k \vartriangleright d)) =\varepsilon(\pi(k_1 \vartriangleright d_1)) \pi (h k_2 \vartriangleright d_2) = (hk_2 \cdot d_2) \varepsilon(k_1 \cdot d_1).$$
	\end{enumerate}
\end{proof}

We call the partial action constructed in Proposition \ref{acaoinduzida} by \textit{partial induced action}. It is immediate that the induced action is global if and only if $\varepsilon(h \cdot d) = \varepsilon(\varepsilon_s(h) \cdot d)$ what is equivalent to $\varepsilon(\pi (h \vartriangleright d)) = \varepsilon(\pi (\varepsilon_s(h) \vartriangleright d)),$ for all $h \in H$ and $d \in D$.
\begin{ex}
	
(i) Consider $\Bbbk \mathcal{G}$ with $\mathcal{G}$ a groupoid. We already know that $\Bbbk \mathcal{G}$ is a $\Bbbk \mathcal{G}$-module coalgebra via its multiplication by Example \ref{Exemploglobal} (i). Assume that $e \in \mathcal{G}_0$ and $h \in \mathcal{G}_e$, thus the vector space $D=<\delta_h>_{\Bbbk} $ is a subcoalgebra of $\Bbbk \mathcal{G}$. Define
		$$ \pi(\delta_g) =\left \{ \begin{array}{rl}  \delta_g, \ if \ \delta_g \in D \\
		0, if \ \delta_g \notin D.
		\end{array}
		\right. $$
	Therefore, $D$ is a symmetric partial $\Bbbk \mathcal{G}-$module coalgebra.	

(ii) In view of Example \ref{Exemploglobal} (ii), it is known that $\Bbbk \mathcal{G}$ is a $\Bbbk \mathcal{G}$-module coalgebra via $\delta_h \triangleright \delta_g =\delta_gS(\delta_h)$. Then, the projection
		$$ \pi(\delta_g) =\left \{ \begin{array}{rl}  \delta_g, \ if \ \delta_g \in D \\
		0, if \ \delta_g \notin D.
		\end{array}
		\right. $$
		characterizes a partial action of $\Bbbk \mathcal{G}$ on $D = <\delta_l>_\Bbbk$, for a fixed $l$ of $\mathcal{G}$. In addition, note that this partial action is not global since $\varepsilon(\pi (\delta_g \vartriangleright d)) \neq \varepsilon(\pi (\varepsilon_s(\delta_g) \vartriangleright d)).$ To see this, it is enough to take $d = \delta_g= \delta_l$.
\end{ex}

\section{Groupoid Action on Coalgebra}
\quad \
In this section we will work with a groupoid $\mathcal{G}$ such that $|\mathcal{G}_0|$ is finite.

\begin{defn}\label{acaodegrupoideemcoalgebra}
A \textit{partial action of a groupoid $\mathcal{G}$ on a coalgebra} $C$ is a family of subcoalgebras $\{C_g\}_{g \in \mathcal{G}}$ and coalgebra isomorphisms $\{\theta_g: C_{g^{-1}} \rightarrow C_g\}_{g \in \mathcal{G}}$ such that
	
	\begin{enumerate}
		\item [(i)]
		For each $g \in \mathcal{G}$, there exists a linear projection	$P_g: C \rightarrow C$ such that $P_g(C)=C_g$ and, for all $c \in C$ 
		\begin{eqnarray}
		(P_g \otimes P_g) \varDelta(c) &=& \varDelta(P_g(c)); \label{comulti}\\
	P_g(c)&=& P_{r(g)} (c_1) \varepsilon(P_g(c_2))\nonumber\\
	&=& P_{r(g)} (c_2) \varepsilon(P_g(c_1))\label{quisicoalgebra};
	\end{eqnarray}
			\item [(ii)]
		For each $e \in \mathcal{G}_0$, $\theta_e = I_{C_e};$
		\item [(iii)] The following properties hold:
		\begin{eqnarray}
		P_g \circ P_h &=& P_h \circ P_g, \mbox{ for all }g, h \in \mathcal{G};\label{1}\\
		\theta_{h^{-1}} \circ P_h \circ P_{g^{-1}} &=& P_{{(gh)}^{-1}} \circ \theta_{h^{-1}} \circ P_h, \mbox{ for all } (g, h) \in \mathcal{G}^2;\label{2}\\
		\theta_g \circ \theta_h \circ P_{{(gh)}^{-1}} \circ P_{h^{-1}} &=& \theta_{gh} \circ P_{{(gh)}^{-1}} \circ P_{h^{-1}}, \mbox{ for all }(g, h) \in \mathcal{G}^2. \label{3}
		\end{eqnarray}
	\end{enumerate}
\end{defn}

\begin{obs}
		\begin{itemize}
		\item [1.] From the item (i), $P_g(c)=c$, for all $c\in C_g$. 
		\item[2.] From the item (i), we can conclude that $Im P_g \subseteq Im P_{r(g)}$, then
		\begin{eqnarray}
		P_{r(g)} \circ P_g = P_g, \mbox{ for all }g \in \mathcal{G} \label{4}.
		\end{eqnarray}
		\end{itemize}
\end{obs}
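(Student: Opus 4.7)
The plan is to extract both statements directly from the content of item (i) in the definition, without invoking items (ii) or (iii). For the first bullet, I would interpret ``linear projection'' in the standard sense of an idempotent linear map, so that $P_g \circ P_g = P_g$. Then for any $c \in C_g = P_g(C)$, writing $c = P_g(c')$ for some $c' \in C$ and reapplying $P_g$ gives $P_g(c) = P_g^2(c') = P_g(c') = c$. This is a one-line argument and I do not anticipate any difficulty.

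For the second bullet, the key tool is the quasi-coalgebra-type identity (\ref{quisicoalgebra}), namely $P_g(c) = P_{r(g)}(c_1)\,\varepsilon(P_g(c_2))$. Since $\varepsilon(P_g(c_2)) \in \Bbbk$, the right-hand side is a scalar combination of elements of the form $P_{r(g)}(c_1)$, hence lies in $\mathrm{Im}\,P_{r(g)} = C_{r(g)}$. Ranging over $c \in C$, this yields the inclusion $\mathrm{Im}\,P_g \subseteq \mathrm{Im}\,P_{r(g)}$.

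To conclude the identity (\ref{4}), I would combine the two previous steps: for any $c \in C$, the element $P_g(c)$ lies in $C_{r(g)}$ by the inclusion just shown, and then applying the first bullet with $g$ replaced by $r(g)$ gives $P_{r(g)}(P_g(c)) = P_g(c)$. Since this holds for every $c \in C$, we obtain $P_{r(g)} \circ P_g = P_g$ as operators on $C$. The only conceptual point worth flagging is that (\ref{quisicoalgebra}) is being read as an identity in $C$ rather than under counit, so no further properties of $\varepsilon$ or $\Delta$ are needed beyond the formula itself; thus there is no real obstacle in the argument.
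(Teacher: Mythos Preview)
Your proposal is correct and is precisely the argument the paper has in mind: the remark is stated without proof, and your two-step reasoning (idempotence of $P_g$ for the first item, and the identity $P_g(c)=P_{r(g)}(c_1)\varepsilon(P_g(c_2))$ for the inclusion $\mathrm{Im}\,P_g\subseteq \mathrm{Im}\,P_{r(g)}$, followed by applying item~1 to $r(g)$) is exactly the intended justification.
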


\begin{lema}\label{2.5.3}
Suppose that a groupoid $\mathcal{G}$ acts partially on a coalgebra. Then:
	
	\begin{enumerate}
		\item [(i)]
		 For all $g \in \mathcal{G}$, $\theta_{r(g)} \circ \theta_g = \theta_g$ and $\theta_{r(g)} \circ P_g = P_g;$
				
		\item [(ii)]
		For all $g \in \mathcal{G}$, $\theta_{g^{-1}}= {(\theta_g)}^{-1}$;
		\item [(iii)]
		$P_{g^{-1}} \circ \theta_h = \theta_h \circ P_{({gh)}^{-1}} \circ I_{C_{h^{-1}}},$ for all $(g, h) \in \mathcal{G}^2.$
	\end{enumerate}
\end{lema}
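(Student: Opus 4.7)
The strategy is to derive each item from axioms \eqref{1}--\eqref{3} of the preceding definition, the identity \eqref{4} recorded in the preceding Remark, and the defining requirement $\theta_e = I_{C_e}$ for $e \in \mathcal{G}_0$.

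For (i), I would first use the observation in the Remark that $C_g = \operatorname{Im} P_g \subseteq \operatorname{Im} P_{r(g)} = C_{r(g)}$. Since $\theta_{r(g)} = I_{C_{r(g)}}$, it acts as the identity on every element of $C_g$. Hence for any $c \in C_{g^{-1}}$ one has $\theta_g(c) \in C_g \subseteq C_{r(g)}$, giving $\theta_{r(g)}(\theta_g(c)) = \theta_g(c)$, which is the first equality. An identical argument with $P_g(c) \in C_g$ in place of $\theta_g(c)$ yields the second.

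For (ii), I apply \eqref{3} to the composable pair $(g, g^{-1})$ (for which $g \cdot g^{-1} = r(g)$):
\[\theta_g \circ \theta_{g^{-1}} \circ P_{r(g)} \circ P_g \,=\, \theta_{r(g)} \circ P_{r(g)} \circ P_g.\]
Using \eqref{4} to collapse $P_{r(g)} \circ P_g = P_g$ and part (i) to simplify the right-hand side, this reduces to $\theta_g \circ \theta_{g^{-1}} \circ P_g = P_g$; restricting to $C_g$, where $P_g$ acts as the identity, yields $\theta_g \circ \theta_{g^{-1}} = I_{C_g}$. The symmetric argument with the pair $(g^{-1}, g)$ produces $\theta_{g^{-1}} \circ \theta_g = I_{C_{g^{-1}}}$, whence $\theta_{g^{-1}} = (\theta_g)^{-1}$.

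The main work lies in (iii), and the delicate issue will be the bookkeeping of which subcoalgebra each intermediate element belongs to. Starting from \eqref{2},
\[\theta_{h^{-1}} \circ P_h \circ P_{g^{-1}} \,=\, P_{(gh)^{-1}} \circ \theta_{h^{-1}} \circ P_h,\]
I would evaluate both sides at $\theta_h(c)$ for an arbitrary $c \in C_{h^{-1}}$. Since $\theta_h(c) \in C_h$ is fixed by $P_h$, the commutativity relation \eqref{1} rewrites the left-hand side as $\theta_{h^{-1}}(P_{g^{-1}}(\theta_h(c)))$, while the right-hand side collapses to $P_{(gh)^{-1}}(c)$ by part (ii). The equality $\theta_{h^{-1}}(P_{g^{-1}}(\theta_h(c))) = P_{(gh)^{-1}}(c)$ in particular forces $P_{g^{-1}}(\theta_h(c)) \in C_h$ (the domain of $\theta_{h^{-1}}$), so that applying $\theta_h$ to both sides and invoking part (ii) once more yields the desired $P_{g^{-1}}(\theta_h(c)) = \theta_h(P_{(gh)^{-1}}(c))$. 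The only remaining legitimacy check is that $\theta_h$ may indeed be applied to $P_{(gh)^{-1}}(c)$, i.e., that $P_{(gh)^{-1}}(c) \in C_{h^{-1}}$; this follows from \eqref{1} applied to $P_{h^{-1}}$ and $P_{(gh)^{-1}}$ together with $P_{h^{-1}}(c) = c$.
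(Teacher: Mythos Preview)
Your argument is correct. Parts (ii) and (iii) follow essentially the same route as the paper: for (ii) you specialize \eqref{3} to the pair $(g,g^{-1})$ and simplify with \eqref{4} and part (i), just as the paper does (the paper then invokes the assumed bijectivity of $\theta_g$ rather than repeating the symmetric computation, but this is a cosmetic difference); for (iii) you unwind \eqref{2} and \eqref{1} pointwise at $\theta_h(c)$, which is exactly the paper's operator-level manipulation $P_{g^{-1}}\circ P_h = \theta_h\circ P_{(gh)^{-1}}\circ\theta_{h^{-1}}\circ P_h$ rewritten elementwise, and your domain checks (that $P_{g^{-1}}(\theta_h(c))\in C_h$ and $P_{(gh)^{-1}}(c)\in C_{h^{-1}}$) are the careful justification that the paper leaves implicit.

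The one genuine divergence is in (i). The paper obtains $\theta_{r(g)}\circ\theta_g=\theta_g$ by substituting $(r(g),g)$ into \eqref{3} and then deduces $\theta_{r(g)}\circ P_g=P_g$ from the first identity via the surjectivity of $\theta_g:C_{g^{-1}}\to C_g$. Your route is shorter and avoids \eqref{3} altogether: since the Remark already records $C_g\subseteq C_{r(g)}$ and the definition stipulates $\theta_{r(g)}=I_{C_{r(g)}}$, both identities are immediate. This is a cleaner argument for (i), though it makes no difference downstream since (ii) and (iii) still rely on \eqref{3} and \eqref{2} respectively.
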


\begin{proof}
	(i) Let $g \in \mathcal{G}$, then $r(g)g = g$, and, $r(g) = gg^{-1}$. Substituting $g$ for $r(g)$ and $h$ for $g$ in the equation (\ref{3}), we have $\theta_{r(g)} \circ \theta_g = \theta_{g},$ since $P_g$ is a projection, for each $g\in\mathcal{G}$. Furthermore, for each $c \in C$, there exists $d \in C_{g^{-1}}$ such that $P_g(c) = \theta_g(d).$ Therefore, for all $ c \in C,$
			$$\begin{array}{l}
			\theta_{r(g)} \circ P_g(c) = \theta_{r(g)}(\theta_g(d))
			=\theta_g(d)=P_g(c).
			\end{array}$$
			
		(ii) In (\ref{3}), replacing $h$ by $g^{-1}$ we have	
		$$\theta_g \circ \theta_{g^{-1}} \circ P_{r(g)} \circ P_g = \theta_{r(g)} \circ P_{r(g)} \circ P_g.$$
Since (\ref{4}) holds, then $\theta_g \circ \theta_{g^{-1}} \circ P_g = \theta_{r(g)} \circ P_g$. In view of (i) it follows that  $\theta_g \circ \theta_{g^{-1}} =I_{C_g}$. Besides that, since $\theta_g$ is a bijection, thus $\theta_{g^{-1}}= {(\theta_g)}^{-1}$.
		
		(iii)
		We know that, by (\ref{1}) and (\ref{2})
		$$	\theta_{h^{-1}} \circ P_{g^{-1}} \circ P_h = P_{{(gh)}^{-1}} \circ \theta_{h^{-1}} \circ P_h.$$
		Then, by applying $\theta_h$ on both sides of this equation, we have
		$$ P_{g^{-1}} \circ P_h
		= \theta_h \circ P_{{(gh)}^{-1}} \circ \theta_{h^{-1}} \circ P_h.$$
		
		By hypothesis, for each $d \in C_{h^{-1}}$ there exists $c \in C$ such that $\theta_h(d) = P_h(c)$. Then, $\mbox{ for all } (g, h) \in \mathcal{G}^2,$
		$$\begin{array}{rl}
		P_{g^{-1}} \circ \theta_h  = \theta_h \circ P_{{(gh)}^{-1}} \circ \theta_{h^{-1}} \circ \theta_h =\theta_h \circ P_{({gh)}^{-1}} \circ I_{C_{h^{-1}}}.
		\end{array}$$
	\end{proof}

\begin{prop}\label{ida}
Assume that $\mathcal{G}$ is a groupoid that acts partially on the coalgebra $C = \bigoplus_ {e \in \mathcal{G}_0} C_e$, then
	$$\begin{array}{rl}
	\cdot : \Bbbk \mathcal{G} \otimes C & \rightarrow C \\
	\delta_g \otimes c &\mapsto \theta_g(P_{g^{-1}}(c))
	\end{array}$$
	
	defines a symmetric partial action of $\Bbbk \mathcal{G}$ on $C$.
\end{prop}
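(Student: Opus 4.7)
The plan is to verify the four conditions defining a symmetric partial action of $\Bbbk\mathcal{G}$ on $C$, namely (PMC1), (PMC2), (PMC3) and symmetry, by translating each one through the formula $\delta_g\cdot c=\theta_g(P_{g^{-1}}(c))$. Before any of this, I would record a preliminary fact needed repeatedly: for distinct $e,f\in\mathcal{G}_0$ one has $P_e\circ P_f=0$. Indeed, by (\ref{1}) these projections commute, so their common image lies in $C_e\cap C_f$, and the direct sum hypothesis forces this to vanish. Combined with (\ref{4}), this gives the slightly stronger statement that $P_h\circ P_{g^{-1}}=0$ whenever $r(h)\neq d(g)$, because $\mathrm{Im}(P_h)\subseteq C_{r(h)}$ and $\mathrm{Im}(P_{g^{-1}})\subseteq C_{d(g)}$.

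For (PMC1), expand $1_{\Bbbk\mathcal{G}}=\sum_{e\in\mathcal{G}_0}\delta_e$; since $\theta_e=I_{C_e}$, the action of $\delta_e$ reduces to $P_e$, and the preliminary fact together with the direct sum decomposition gives $\sum_e P_e(c)=c$. For (PMC2), use that $\theta_g$ is a coalgebra isomorphism together with condition (\ref{comulti}) to write
\[
\Delta(\theta_g(P_{g^{-1}}(c)))=(\theta_g\otimes\theta_g)\Delta(P_{g^{-1}}(c))=(\theta_g P_{g^{-1}}\otimes\theta_g P_{g^{-1}})\Delta(c),
\]
which is exactly $(\delta_g\cdot c_1)\otimes(\delta_g\cdot c_2)$.

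For (PMC3), split on whether $gh$ is defined. If $\exists gh$, apply Lemma~\ref{2.5.3}(iii) to push $P_{g^{-1}}$ through $\theta_h$, obtaining $\theta_g\theta_h P_{(gh)^{-1}}P_{h^{-1}}(c)$, and then use (\ref{3}) to collapse $\theta_g\theta_h P_{(gh)^{-1}}P_{h^{-1}}$ to $\theta_{gh}P_{(gh)^{-1}}P_{h^{-1}}$. Using the first equality of (\ref{quisicoalgebra}) applied to $P_{h^{-1}}$, together with $r((gh)^{-1})=d(h)$ and (\ref{4})--(\ref{1}), one rewrites $P_{(gh)^{-1}}(P_{h^{-1}}(c))=P_{(gh)^{-1}}(c_1)\varepsilon(P_{h^{-1}}(c_2))$; applying $\theta_{gh}$ and remembering that $\theta_h$ (as a coalgebra map) preserves $\varepsilon$ gives the right-hand side $(\delta_g\delta_h\cdot c_1)\varepsilon(\delta_h\cdot c_2)$. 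If $\nexists gh$, both sides are zero: the right-hand side because $\delta_g\delta_h=0$, and the left-hand side by the preliminary fact, since $\theta_h(P_{h^{-1}}(c))\in C_{r(h)}$ and $P_{g^{-1}}$ kills this summand when $r(h)\neq d(g)$. The symmetry condition is proved identically, using the second equality of (\ref{quisicoalgebra}) in place of the first.

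The main obstacle is the case $\nexists gh$ of (PMC3): Lemma~\ref{2.5.3}(iii) is not available there, so one must argue directly that $P_{g^{-1}}\circ\theta_h\circ P_{h^{-1}}=0$. This is where the hypothesis $C=\bigoplus_{e\in\mathcal{G}_0}C_e$ is really used, via the reduction of $P_h\circ P_{g^{-1}}$ to $P_{r(h)}\circ P_{d(g)}\circ P_h\circ P_{g^{-1}}$ through repeated application of (\ref{4}) and (\ref{1}), together with the vanishing $P_e\circ P_f=0$ established at the outset.
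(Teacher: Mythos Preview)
Your proposal is correct and follows essentially the same approach as the paper: both verify (PMC1)--(PMC3) and symmetry directly, split the associativity check on whether $gh$ exists, use Lemma~\ref{2.5.3}(iii) and (\ref{3}) in the case $(g,h)\in\mathcal{G}^2$, and invoke the direct sum decomposition to get $P_{g^{-1}}\circ P_h=0$ in the case $(g,h)\notin\mathcal{G}^2$. Your handling of the case $\exists gh$ is slightly more streamlined: where the paper establishes $\delta_{gh}\cdot P_{h^{-1}}(c)=(\delta_{gh}\cdot c_1)\varepsilon(\delta_h\cdot c_2)$ through an auxiliary computation of $\delta_{gh}\cdot\delta_e\cdot c$, you obtain the analogous identity $P_{(gh)^{-1}}(P_{h^{-1}}(c))=P_{(gh)^{-1}}(c_1)\varepsilon(P_{h^{-1}}(c_2))$ directly by applying (\ref{quisicoalgebra}) to $P_{h^{-1}}$ and then absorbing $P_{d(h)}$ into $P_{(gh)^{-1}}$ via (\ref{1}) and (\ref{4}).
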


\begin{proof}It is straightforward to check that $\cdot$ is well defined as a linear map.
	
(i)	Observe that since $C = \bigoplus_ {e \in \mathcal{G}_0} C_e$, thus for each $c \in C$, $c = \sum_{e \in \mathcal{G}_0} P_e(c).$ Then, using Lemma \ref{2.5.3}:
$$\begin{array}{rl}
1_{\Bbbk \mathcal{G}} \cdot c  =\displaystyle \sum_{e \in \mathcal{G}_0} \delta_e \cdot c
=\displaystyle \sum_{e \in \mathcal{G}_0} \theta_e(P_{e}(c))
=\displaystyle \sum_{e \in \mathcal{G}_0} P_{e}(c)
= c, \mbox{ for all $c \in C.$}
\end{array}$$

	
		
%
(ii) For all $c \in C$ and $g \in \mathcal{G}$,
		$$\begin{array}{rl}
		\varDelta(\delta_g \cdot c) &= \varDelta(\theta_g (P_{g^{-1}} (c))) \\
		&= \theta_g({P_{g^{-1}}(c_1)}) \otimes \theta_g({P_{g^{-1}}(c_2)})\\
		&= \delta_g \cdot c_1 \otimes \delta_g \cdot c_2.
		\end{array}$$
		
(iii) To show that $\delta_g \cdot \delta_h \cdot c = (\delta_{gh}\cdot c_1) \varepsilon(\delta_{h}\cdot c_2),
		=(\delta_{gh}\cdot c_2) \varepsilon(\delta_{h}\cdot c_1)$ we need to analyze two cases.
		
Case 1)
			Suppose that $(g,h) \notin \mathcal{G}^2$, then $\nexists gh$, thus we know that $\delta_{gh}=0$. Therefore, it is enough to show that $\delta_g \cdot \delta_h \cdot c =0$. First of all, note that, for each $e,f \in \mathcal{G}_0$
			$$ P_f \circ P_e =\left \{ \begin{array}{rl}  P_e, \ if \ e=f \\
			0, if \ e \neq f,
			\end{array}
			\right. $$
			by $C = \bigoplus_ {e \in \mathcal{G}_0} C_e$ and (\ref{1}). Since $d(g) \neq r(h)$, $P_{d(g)} \circ P_{r(h)} = 0$. In addition,
			$$\begin{array}{rl}
			P_{g^{-1}} \circ P_h &\stackrel{(\ref{4})}{=} P_{r({g^{-1}})} \circ P_{g^{-1}} \circ P_{r(h)} \circ P_h\\
			&\stackrel{(\ref{1})}{=} P_{g^{-1}} \circ P_{r({g^{-1}})} \circ P_{r(h)} \circ P_h\\
			&=P_{g^{-1}} \circ P_{d(g)} \circ P_{r(h)} \circ P_h\\
			&=0.
			\end{array}$$
	Therefore, $\delta_g \cdot \delta_h \cdot c = \theta_g(P_{g^{-1}}({\theta_h(P_{h^{-1}}(c))))}=0. $
			
Case 2)	In this case $ (g, h) \in \mathcal{G}^2$. First, we will show that
			$$			\begin{array}{rl}
			\delta_{gh} \cdot P_{h^{-1}}(c) &= (\delta_{gh} \cdot c_1) \varepsilon(\delta_{h} \cdot c_2)\\
			&= (\delta_{gh} \cdot c_2) \varepsilon(\delta_{h} \cdot c_1).
			\end{array}	
			$$
			To do this, it is enough to take $eh^{-1}= h^{-1}$ and note that
			$$
			\begin{array}{rl}
			\delta_{gh} \cdot P_{h^{-1}}(c) &\stackrel{(\ref{quisicoalgebra})}{=} \delta_{gh} \cdot P_{e}(c_1) \varepsilon(P_{h^{-1}}(c_2))\\
			&{=} \delta_{gh} \cdot \theta_e(P_{e}(c_1)) \varepsilon(P_{h^{-1}}(c_2))\\
			&= (\delta_{gh} \cdot \delta_e \cdot c_1) \varepsilon(P_{h^{-1}}(c_2))\\
			&\stackrel{(*)}{=} (\delta_{gh} \cdot c_1) \varepsilon(\delta_e \cdot c_2) \varepsilon(P_{h^{-1}}(c_3))\\
			&{=}(\delta_{gh} \cdot c_1) \varepsilon(P_{e}(c_2)) \varepsilon(P_{h^{-1}}(c_3))\\
			&\stackrel{(\ref{quisicoalgebra})}{=} (\delta_{gh} \cdot c_1) \varepsilon(P_{h^{-1}}(c_2))\\
			&{=} (\delta_{gh} \cdot c_1) \varepsilon(\theta_h(P_{h^{-1}}(c_2)))\\
			&=(\delta_{gh} \cdot c_1) \varepsilon(\delta_{h} \cdot c_2),\\
			\end{array}	
			$$
			
		where, in $(*)$	we are using that
	$$\begin{array}{rl}
	\delta_{gh} \cdot \delta_e \cdot c &=\theta_{gh}(P_{{(gh)}^{-1}}(\theta_e(P_e(c))))\\
	&\stackrel{(\ref{comulti})}{=}\theta_{gh}(P_{{(gh)}^{-1}}(\theta_e(P_{e}(c_1))) \varepsilon(\theta_e(P_{e}(c_2)))\\
	&=\theta_{gh}(P_{{(gh)}^{-1}}(P_{e}(c_1))) \varepsilon(\delta_{e}\cdot c_2)\\
	&\stackrel{(\ref{1})}{=}\theta_{gh}(P_e(P_{{(gh)}^{-1}}(c_1))) \varepsilon(\delta_{e}\cdot c_2)\\
	&\stackrel{(\ref{4})}{=}\theta_{gh}(P_{{(gh)}^{-1}}(c_1)) \varepsilon(\delta_{e}\cdot c_2)\\
	&= (\delta_{gh}\cdot c_1) \varepsilon(\delta_e\cdot c_2),
	\end{array}$$
Similarly, we show $ \delta_{gh} \cdot P_{h^{-1}}(c) =(\delta_{gh} \cdot c_2) \varepsilon(\delta_{h} \cdot c_1)$ using that $\delta_{gh} \cdot \delta_e \cdot c = (\delta_{gh}\cdot c_2) \varepsilon(\delta_e\cdot c_1).$
Now, we will show
				$$\begin{array}{rl}
				\delta_g \cdot \delta_h \cdot c &= (\delta_{gh} \cdot c_1) \varepsilon(\delta_{h} \cdot c_2)\\
				&= (\delta_{gh} \cdot c_2) \varepsilon(\delta_{h} \cdot c_1).
				\end{array}$$
				
Indeed, for each $c \in C$ and $g,h \in \mathcal{G}$,
				
			$$\begin{array}{rl}
				\delta_g \cdot \delta_h \cdot c &=\theta_{g}(P_{{g}^{-1}}(\theta_h(P_{h^{-1}}(c))))\\
				&\stackrel{\ref{2.5.3}}{=}\theta_{g}(\theta_h(P_{{(gh)}^{-1}}(P_{h^{-1}}(c))))\\
				&\stackrel{(\ref{3})}{=}\delta_{gh} \cdot (P_{h^{-1}}(c))\\
				&{=} (\delta_{gh} \cdot c_1) \varepsilon(\delta_{h} \cdot c_2)\\
				&{=} (\delta_{gh} \cdot c_2) \varepsilon(\delta_{h} \cdot c_1).
				\end{array}$$

\end{proof}

\begin{teo}
The following statements are equivalent:	
	\begin{enumerate}
		\item [(i)]There exists a partial action $\theta = (\{\theta_g\}, \{C_g\})_{g \in \mathcal{G}}$ of $\mathcal{G}$ on $C$ such that $C =\displaystyle \bigoplus_ {e \in \mathcal{G}_0} C_e$.
		
		\item [(ii)] $C$ is a symmetric partial $\Bbbk \mathcal{G}$-module coalgebra.
	\end{enumerate}
	
\end{teo}

\begin{proof}
	
	(i) $\Rightarrow$ (ii) It follows by Proposition \ref{ida}.
	
	(ii) $\Rightarrow$ (i) Suppose that $C$ is a symmetric partial $\Bbbk \mathcal{G}$-module coalgebra. We will proceed by steps:
	
Step 1) For each $g \in \mathcal{G}$, there exists a linear projection	$P_g: C \rightarrow C$ such that $P_g(C)=C_g$ and (\ref{comulti}), (\ref{quisicoalgebra}) hold.\\Define $P_g(c) = \varepsilon(\delta_{g^{-1}} \cdot c_1)(\delta_{e} \cdot c_2) = \varepsilon(\delta_{g^{-1}} \cdot c_2)(\delta_{e} \cdot c_1)$, for all $c \in C$. By the symmetry of the partial action we know that, for each $g \in \mathcal{G}$ and for $e$ such that $e=gg^{-1}$,
	$$	\begin{array}{rl}
	\delta_g \cdot \delta_{g^{-1}} \cdot c = \varepsilon(\delta_{g^{-1}} \cdot c_1)(\delta_{g}\delta_{g^{-1}} \cdot c_2)=\varepsilon(\delta_{g^{-1}} \cdot c_1)(\delta_{e} \cdot c_2).
	\end{array}$$
	Similarly, $\delta_g \cdot \delta_{g^{-1}} \cdot c =\varepsilon(\delta_{g^{-1}} \cdot c_2)(\delta_{e} \cdot c_1).$ 
	
	Now take $C_g = P_g(C)$, for each $g \in \mathcal{G}$. Moreover, $P_g(P_g(c)) = P_g(c)$, for all $c \in C$:
			$$\begin{array}{rl}
			P_g(P_g(c)) &= (\delta_{e} \cdot \delta_{e} \cdot c_1) \varepsilon(\delta_{g^{-1}}\cdot \delta_{e} \cdot c_2) \varepsilon(\delta_{g^{-1}} \cdot c_3)\\
			&= (\delta_{e}\delta_e \cdot c_1) \varepsilon(\delta_{e} \cdot c_2) \varepsilon(\delta_{g^{-1}}\delta_e \cdot c_4) \varepsilon(\delta_{e} \cdot c_3) \varepsilon(\delta_{g^{-1}} \cdot c_5)\\
&=(\delta_{e} \cdot c_1) \varepsilon(\delta_{e} \cdot c_2) \varepsilon(\delta_{g^{-1}} \cdot c_4) \varepsilon(\delta_{e} \cdot c_3) \varepsilon(\delta_{g^{-1}} \cdot c_5)\\
			&= (\delta_{e} \cdot c_1) \varepsilon(\delta_{g^{-1}} \cdot c_2)\\
			&=P_g(c).
			\end{array}$$
			
Observe that	
$$\begin{array}{rl}
			P_e(c_1) \varepsilon(P_g(c_2))&= (\delta_e \cdot c_1) \varepsilon(P_g(c_2))\\
			&= (\delta_e \cdot c_1) \varepsilon(\delta_e \cdot c_2) \varepsilon(\delta_{g^{-1}} \cdot c_3)\\
			&= (\delta_e \cdot c_1) \varepsilon(\delta_{g^{-1}} \cdot c_2)\\
			&=P_g(c),
			\end{array}$$
			similarly, $ P_e(c_2) \varepsilon(P_g(c_1))=P_g(c).$ And, for each $c \in C$ and $g \in \mathcal{G}$,
			$$\begin{array}{rl}
			\varDelta(P_g(c)) &=(\delta_e \cdot c_1 \otimes \delta_e \cdot c_2)\varepsilon(\delta_{g^{-1}} \cdot c_3) \varepsilon(\delta_{g^{-1}} \cdot c_4)\\
			&{=}(\delta_e \cdot c_1 \otimes \delta_e \cdot c_3)\varepsilon(\delta_{g^{-1}} \cdot c_2) \varepsilon(\delta_{g^{-1}} \cdot c_4)\\
			&=P_g(c_1) \otimes P_g(c_2)\\
			&=(P_g \otimes P_g)\varDelta(c).\\
			\end{array}$$
	
Step 2)	For each $e \in \mathcal{G}_0$, $\theta_e = I_{C_e},$ and for all $g \in \mathcal{G}$, $\theta_{g}$ is a coalgebra isomorphism.\\Define $\theta_g(P_{g^{-1}}(c)) = \delta_g \cdot P_{g^{-1}}(c)$, for all $c \in C$ and $g \in \mathcal{G}$. It is immediate that  $\theta_e = I_{C_e}$, for all $e \in \mathcal{G}_0$. Besides that, taking $ l $ such that $lg^{-1}= g^{-1}$,
			$$\begin{array}{rl}
			\theta_g(P_{g^{-1}}(c)) &= (\delta_g \cdot c_1) \varepsilon(\delta_l \cdot c_2)\\
			&= (\delta_g \cdot c_2) \varepsilon(\delta_l \cdot c_1).
			\end{array}$$
		Then,
			$$\begin{array}{rl}
			(\theta_g \otimes \theta_g) \varDelta(P_{g^{-1}}(c))&= (\delta_g \cdot c_2) \varepsilon(\delta_l \cdot c_1) \otimes (\delta_g \cdot c_4) \varepsilon(\delta_l \cdot c_3)\\
			&= (\delta_g \cdot c_3) \varepsilon(\delta_l \cdot c_2) \otimes (\delta_g \cdot c_4) \varepsilon(\delta_l \cdot c_1) \\
		&= \varDelta(\delta_g \cdot c_2)\varepsilon(\delta_l \cdot c_1)\\
		&=\varDelta(\theta_g(P_{g^{-1}}))(c).
					\end{array}$$
In addition, $\varepsilon(\theta_g(P_{g^{-1}}(c)))=\varepsilon(P_{g^{-1}}(c))$, for all $c \in C$ and $g \in \mathcal{G}$. To show that $\theta_g$ is injective, suppose that $\theta_g(P_{g^{-1}}(c))= 0$ and $l$ is such that $lg^{-1}= g^{-1}$. Thus,
			$$\begin{array}{rl}
			P_{g^{-1}}(c)&=(\delta_l \cdot c_1)\varepsilon(\delta_g \cdot c_2) \\
			&=\varepsilon(\delta_l \cdot c_1)(\delta_l \cdot c_2)\varepsilon(\delta_g \cdot c_3) \\
			&=\varepsilon(\delta_l \cdot c_1)(\delta_{g^{-1}} \delta_g \cdot c_2)\varepsilon(\delta_g \cdot c_3)\\
			&=\varepsilon(\delta_l \cdot c_1)(\delta_{g^{-1}} \cdot \delta_g \cdot c_2)\\
			&=\delta_{g^{-1}} \cdot (\delta_g \cdot c_2)\varepsilon(\delta_l \cdot c_1)\\
			&=0.
			\end{array}$$
			
For the surjective, consider $c \in C$ and $g \in \mathcal{G}$, then
			$$\begin{array}{rl}
			P_g(c)&=(\delta_e \cdot c_1)\varepsilon(\delta_{g^{-1}} \cdot c_2)\\
				&=\varepsilon(\delta_e \cdot c_1) (\delta_e \cdot c_2) \varepsilon(\delta_{g^{-1}} \cdot c_3)\\
				&=\varepsilon(\delta_e \cdot c_1) (\delta_g \cdot \delta_{g^{-1}} \cdot c_2)\\
				&=\theta_g(\varepsilon(\delta_e \cdot c_1) (\delta_{g^{-1}} \cdot c_2))\\
				&=\theta_{g}((\delta_{g^{-1}} \cdot c_3) \varepsilon(\delta_g \delta_{g^{-1}} \cdot c_1) \varepsilon(\delta_{g^{-1}} \cdot c_2))\\							
			&= \theta_g((\delta_{g^{-1}} \cdot c_3) \varepsilon(\delta_{g^{-1}} \cdot c_2) \varepsilon(\delta_g \cdot \delta_{g^{-1}} \cdot c_1))\\								
			&=\theta_{g}( (\delta_{l{g^{-1}}} \cdot c_3) \varepsilon(\delta_{g^{-1}} \cdot c_2) \varepsilon(\delta_g \cdot \delta_{g^{-1}} \cdot c_1))\\
			&=\theta_{g} ((\delta_l \cdot \delta_{g^{-1}} \cdot c_2) \varepsilon(\delta_g \cdot \delta_{g^{-1}} \cdot c_1))\\								
			&=\theta_{g}(	P_{g^{-1}}(\delta_{g^{-1}} \cdot c)),								
		\end{array}$$
			where $e=gg^{-1}$ and $l$ is such that $lg^{-1}=g^{-1}$. Therefore, $\theta_{g}$ is a coalgebra isomorphism, for every $g \in \mathcal{G}$.

Step 3) The equation (\ref{1}) holds.\\
Consider $f$ an element such that $fh=h$ and $e$  such that $eg=g.$ Then
		$$\begin{array}{rl}
		P_h(P_g(c))&= P_h(P_e(c_2)) \varepsilon(P_g(c_1))\\
		&= P_h(\delta_e \cdot c_3) \varepsilon(\delta_{g^{-1}} \cdot c_1) \varepsilon(\delta_e \cdot c_2)\\
		&= P_h(\delta_e \cdot c_2) \varepsilon(\delta_{g^{-1}} \cdot c_1) \\
		&= (\delta_f \cdot \delta_e \cdot c_2) \varepsilon(\delta_{h^{-1}} \cdot \delta_e \cdot c_3) \varepsilon(\delta_{g^{-1}} \cdot c_1) \\
		&= \left\{\begin{array}{rl}  &\hspace{-0.5cm}0,  \ se \ e \neq f \\
		&\hspace{-0.5cm}(\delta_e \cdot c_2) \varepsilon(\delta_{h^{-1}} \cdot c_3) \varepsilon(\delta_{g^{-1}} \cdot c_1), \ if \ e=f
		\end{array} \right.\\
		&= (\delta_e \cdot \delta_f \cdot c_2) \varepsilon(\delta_{g^{-1}} \cdot \delta_f \cdot c_1) \varepsilon(\delta_{h^{-1}} \cdot c_3) \\
		&= P_g(P_f(c_1)) \varepsilon(P_h(c_2))\\
	&=	P_g(P_h(c)).
		\end{array} $$
		
Step 4) The equation (\ref{2}) holds.\\Consider $f $ such that $ fh = h $ and $ l $ such that $ lg ^ {- 1} = g^{-1}$. Therefore,
	$$\begin{array}{rl}
		\theta_{h^{-1}} ( P_{g^{-1}} ( P_h(c))) &= \theta_{h^{-1}} ( P_{g^{-1}} ( \delta_f \cdot c_1)) \varepsilon(\delta_{h^{-1}} \cdot c_2)\\
		&= (\delta_{h^{-1}} \cdot \delta_l \cdot  \delta_f \cdot c_1) \varepsilon(\delta_g \cdot \delta_f \cdot c_2) \varepsilon(\delta_{h^{-1}} \cdot c_3)\\
		&\stackrel{(*)}{=} (\delta_{h^{-1}} \cdot \delta_l \cdot  \delta_l \cdot c_1) \varepsilon(\delta_g \cdot \delta_l \cdot c_2) \varepsilon(\delta_{h^{-1}} \cdot c_3)\\
		&= (\delta_{h^{-1}} \cdot \delta_l \cdot c_1) \varepsilon(\delta_g \cdot \delta_l \cdot c_2) \varepsilon(\delta_{h^{-1}} \cdot c_3)\\
		&= (\delta_{h^{-1}}\delta_l \cdot c_1) \varepsilon(\delta_g \delta_l \cdot c_2) \varepsilon(\delta_l \cdot c_3) \varepsilon(\delta_{h^{-1}} \cdot c_4)\\
		&= (\delta_{{h^{-1}}} \cdot c_1) \varepsilon(\delta_{g} \cdot c_2) \varepsilon(\delta_l \cdot c_3) \varepsilon(\delta_{h^{-1}} \cdot c_4),
		\end{array}$$
		in $(*)$ we are using that $l = r(g^{-1}) = d(g)= r(h) = f$.
		
		On the other side, note that if $k$ is such that $k{h}^{-1}={h}^{-1}$, then also satisfies $k{(gh)}^{-1}={(gh)}^{-1}$. Thus, using that $l = f = r(h)$,
		\begin{eqnarray*}
			& \ & P_{{(gh)}^{-1}} (\theta_{h^{-1}}( P_h (c))) \\
			&=& P_{{(gh)}^{-1}} (\delta_{h^{-1}} \cdot  \delta_l \cdot c_1) \varepsilon(\delta_{h^{-1}} \cdot c_2)\\
			&=& P_{{(gh)}^{-1}} (\delta_{{h^{-1}}} \cdot c_1) \varepsilon(\delta_l \cdot c_2) \varepsilon(\delta_{h^{-1}} \cdot c_3)\\
			&=&(\delta_k \cdot \delta_{{h^{-1}}} \cdot c_1) \varepsilon(\delta_{gh} \cdot \delta_{{h^{-1}}} \cdot c_2) \varepsilon(\delta_l \cdot c_3) \varepsilon(\delta_{h^{-1}} \cdot c_4)\\
			&=&(\delta_k \delta_{{h^{-1}}} \cdot c_1) \varepsilon(\delta_{h^{-1}} \cdot c_2)\varepsilon(\delta_{gh} \cdot \delta_{{h^{-1}}} \cdot c_3) \varepsilon(\delta_l \cdot c_4) \varepsilon(\delta_{h^{-1}} \cdot c_5)\\
&=&(\delta_{{h^{-1}}} \cdot c_1) \varepsilon(\delta_{h^{-1}} \cdot c_2) \varepsilon(\delta_{ghh^{-1}} \cdot c_3) \varepsilon(\delta_l \cdot c_4) \varepsilon(\delta_{h^{-1}} \cdot c_5)\\
			&=&(\delta_{{h^{-1}}} \cdot c_1)  \varepsilon(\delta_{g} \cdot c_2) \varepsilon(\delta_l \cdot c_3) \varepsilon(\delta_{h^{-1}} \cdot c_4).
		\end{eqnarray*}

Step 5) The equation (\ref{3}) holds.\\Consider $e$ such that $kh^{-1}= h^{-1}$, thus $k{(gh)}^{-1}={(gh)}^{-1}$, then,
		\begin{eqnarray*}
			\theta_g ( \theta_h ( P_{{(gh)}^{-1}} ( P_{h^{-1}}(c)))) &=& \theta_g (\theta_h ( \delta_k \cdot  \delta_k \cdot  c_1)) \varepsilon(\delta_{gh} \cdot  c_2)\varepsilon(\delta_h \cdot  c_3) \\
			&=& \theta_g ( \delta_h  \cdot  \delta_k \cdot  c_1) \varepsilon(\delta_{gh} \cdot  c_2)\varepsilon(\delta_h \cdot  c_3) \\
			&=&(\delta_g \cdot \delta_{h} \cdot  c_2) \varepsilon(\delta_k \cdot  c_1) \varepsilon(\delta_{gh} \cdot  c_3)\varepsilon(\delta_h \cdot  c_4) \\
			&=&(\delta_{gh} \cdot  c_3) \varepsilon(\delta_h \cdot  c_2) \varepsilon(\delta_k \cdot  c_1) \varepsilon(\delta_{gh} \cdot  c_4)\varepsilon(\delta_h \cdot  c_5) \\
			&= &(\delta_{gh} \cdot  c_3) \varepsilon(\delta_h \cdot  c_2) \varepsilon(\delta_k \cdot  c_1)\varepsilon(\delta_h \cdot  c_4) \\
			&=& (\delta_{gh} \cdot  c_2) \varepsilon(\delta_h \cdot  c_3) \varepsilon(\delta_k \cdot  c_1)\varepsilon(\delta_h \cdot  c_4) \\
			&=& (\delta_{gh} \cdot  c_2) \varepsilon(\delta_h \cdot  c_3) \varepsilon(\delta_k \cdot  c_1)\\
		&=& (\delta_{gh} \cdot  c_2)\varepsilon(\delta_k \cdot  c_1) \varepsilon(\delta_{gh} \cdot  c_3)\varepsilon(\delta_h \cdot  c_4) \\	
	&=&( \delta_{gh}  \cdot  \delta_k \cdot  c_1) \varepsilon(\delta_{gh} \cdot  c_2)\varepsilon(\delta_h \cdot  c_3) \\
	&=& \theta_{gh} ( \delta_k \cdot  \delta_k \cdot  c_1) \varepsilon(\delta_{gh} \cdot  c_2)\varepsilon(\delta_h \cdot  c_3) \\	
	&=&\theta_{gh} ( P_{{(gh)}^{-1}} ( P_{h^{-1}}(c)).
\end{eqnarray*}

Step 6)$C =\bigoplus_ {e \in \mathcal{G}_0} C_e$.\\Indeed,
		$$\begin{array}{rl}
		c = 1_{\Bbbk \mathcal{G}} \cdot c
		=\displaystyle \sum_{e \in \mathcal{G}_0} \delta_e \cdot c
		=\displaystyle \sum_{e \in \mathcal{G}_0} \theta_e (P_e(c))
		=\displaystyle \sum_{e \in \mathcal{G}_0} P_e(c).
		\end{array}	$$
Thus, $C= \sum_{e \in \mathcal{G}_0} ImP_e=  \sum_{e \in \mathcal{G}_0} C_e.$ Besides that, since
		$$\delta_e \cdot \delta_f \cdot c =
		\left \{ \begin{array}{rl}  0, \ if \ e\neq f \\
		\delta_e \cdot c, \ if \ e = f,
		\end{array}
		\right. $$
consider $d \in ImP_e \cap \sum_{f \in \mathcal{G}_0 \ f\neq e} ImP_f,$ then, there exist $b, c \in C$ such that $d= P_e(c) = \sum_{f \in \mathcal{G}_0 \ f\neq e} P_f(b)$. Thus,
		$$\begin{array}{rl}
		d &= P_e(P_e(c))\\
		&= P_e(\sum_{f \in \mathcal{G}_0 \ f\neq e} P_f(b))\\
		&= \sum_{f \in \mathcal{G}_0 \ f\neq e} \delta_e \cdot \delta_f \cdot b\\
		&=0.
		\end{array}$$
	Therefore, $ImP_e \cap  \sum_{f \in \mathcal{G}_0 \ f\neq e} ImP_f= \{0\}.$

\end{proof}

\section{Dualization}
\quad
Our purpose in this section is to generalize the dualization results presented in \cite{Glauberglobalizations} for the case of weak Hopf algebras.

\begin{teo}\label{teodual}
Let $ H $ be a weak Hopf algebra and $ C $ a left partial $H$-module coalgebra via
\begin{eqnarray*}
		\cdot : H \otimes C &\rightarrow& C\\
		h \otimes c &\mapsto&h \cdot c.
	\end{eqnarray*}
	Then, $C^*$ is a right partial $H$-module algebra via
	\begin{eqnarray*}
		\leftharpoonup : C^* \otimes H &\rightarrow& C^*\\
		\alpha \otimes h &\mapsto&(\alpha \leftharpoonup h),
	\end{eqnarray*}
	where $(\alpha \leftharpoonup h)(c)=\alpha (h \cdot c), \mbox{ for all } c \in C.$
	
Moreover, if $C$ is a symmetric left partial $H$-module coalgebra, then $C^*$ is a symmetric right partial $H$-module algebra.
\end{teo}

\begin{proof}
We will show that $C^*$ is a partial $H$-module algebra. Given $\alpha, \beta \in C^*$ and $h, k \in H$ we obtain:

(i) $	(\alpha \leftharpoonup 1_H)(c)= \alpha(1_H \cdot c)=\alpha(c),$ for all $c \in C$.	Then, $\alpha \leftharpoonup 1_H= \alpha$.	

(ii) $(\alpha \beta \leftharpoonup h) = (\alpha \leftharpoonup h_1)( \beta  \leftharpoonup h_2)$, since,
		\begin{eqnarray*}
			(\alpha \beta \leftharpoonup h)(c) &=& (\alpha \beta) (h \cdot c)\\
			&=& \alpha ((h \cdot c)_1)\beta ((h \cdot c)_2)\\
			&\stackrel{(PMC2)}{=}& \alpha (h_1 \cdot c_1)\beta (h_2 \cdot c_2)\\
			&=&(\alpha \leftharpoonup h_1)(c_1)(\beta \leftharpoonup h_2)(c_2)\\
			&{=}&((\alpha \leftharpoonup h_1)(\beta \leftharpoonup h_2))(c).
		\end{eqnarray*}
for all $c \in C$.	

(iii) $(\alpha \leftharpoonup h) \leftharpoonup g= (\alpha \leftharpoonup hg_1)(1_{C^*}   \leftharpoonup g_2)$, indeed, for all $c \in C$,
		\begin{eqnarray*}
			((\alpha \leftharpoonup h) \leftharpoonup k)(c) &=& (\alpha \leftharpoonup h) (k \cdot c)\\
			&=& \alpha (h \cdot  (k \cdot c))\\
			&\stackrel{(PMC3)}{=}& \alpha (hk_1 \cdot c_1)\varepsilon (k_2 \cdot c_2)\\
			&=&(\alpha \leftharpoonup hk_1)(c_1)(\varepsilon \leftharpoonup k_2)(c_2)\\
		&=&((\alpha \leftharpoonup hk_1)(1_{C^*} \leftharpoonup k_2))(c).\\
		\end{eqnarray*}

Suppose now that $C$ is a symmetric left partial $H$-module coalgebra. Then, for all $c \in C$,
\begin{eqnarray*}
		((\alpha \leftharpoonup h) \leftharpoonup k)(c) &=& (\alpha \leftharpoonup h) (k \cdot c)\\
		&=& \alpha (h \cdot (k \cdot c))\\
		&=& \varepsilon (k_1 \cdot c_1) \alpha (hk_2 \cdot c_2)\\
		&=&(\varepsilon \leftharpoonup k_1)(c_1)(\alpha \leftharpoonup hk_2)(c_2)\\
	&=&((1_{C^*} \leftharpoonup k_1)(\alpha \leftharpoonup hk_2))(c).\\
	\end{eqnarray*}
Therefore,  $C^*$ is a symmetric right partial $H$-module algebra.

\end{proof}

\quad

As follows, we have the converse of Theorem \ref{teodual}.

\begin{teo}\label{teodual2}
	Let $H$ be a weak Hopf algebra and $C^*$ a right partial $H$-module algebra via
	\begin{eqnarray*}
		\leftharpoonup : C^* \otimes H &\rightarrow& C^*\\
		\alpha \otimes h &\mapsto&(\alpha \leftharpoonup h).
	\end{eqnarray*}
	Then, $C$ is a left partial $H$-module coalgebra via
	\begin{eqnarray*}
		\cdot : H \otimes C &\rightarrow& C\\
		h \otimes c &\mapsto&h \cdot c.
	\end{eqnarray*}
	such that $(\alpha \leftharpoonup h)(c)=\alpha (h \cdot c),$ for all  $\alpha \in C^*$.
	
Moreover, if $C^*$ is a symmetric right partial $H$-module algebra, then $C$ is a symmetric left partial $H$-module coalgebra.
\end{teo}

\begin{proof}
We will show that $C$ is a partial $H$-module coalgebra. Given $c \in C$ and $h, k \in H$, we obtain:	
	
(i) $1_H \cdot c=c$, since for all $\alpha \in C^*$, $\alpha(1_H \cdot c)=(\alpha \leftharpoonup 1_H)(c)=\alpha(c)$.

(ii) $\Delta(h \cdot c) = (h_1 \cdot c_1) \otimes ( h_2 \cdot c_2)$, indeed, for all $\alpha, \beta \in C^*$,
		\begin{eqnarray*}
			(\alpha \otimes \beta) (\Delta(h \cdot c))
			&=&(\alpha \beta)(h \cdot c) \\
			&=&(\alpha \beta \leftharpoonup h)(c)\\
			&=&[(\alpha \leftharpoonup h_1)(\beta \leftharpoonup h_2)](c)\\
			&=&(\alpha \leftharpoonup h_1)(c_1)(\beta \leftharpoonup h_2)(c_2)\\
			&=&\alpha (h_1 \cdot c_1) \beta (h_2 \cdot c_2)\\
			&=&(\alpha \otimes \beta )[ (h_1 \cdot c_1) \otimes (h_2 \cdot c_2)].\\
		\end{eqnarray*}	
	
(iii) $h \cdot (k \cdot c)= (hk_1 \cdot c_1)\varepsilon(k_2 \cdot c_2)$, since for all $\alpha \in C^*$,
		\begin{eqnarray*}
			\alpha (h \cdot (k \cdot c))&=&(\alpha \leftharpoonup h)(k \cdot c)\\
			&=& ((\alpha \leftharpoonup h) \leftharpoonup k)(c)\\
			&=&[(\alpha \leftharpoonup hk_1)(1_{C^*} \leftharpoonup k_2)](c)\\
			&=&(\alpha \leftharpoonup hk_1)(c_1)(\varepsilon \leftharpoonup k_2)(c_2)\\
			&=&\alpha(hk_1 \cdot c_1)\varepsilon (k_2 \cdot c_2).\\
		\end{eqnarray*}

Suppose now that $C^*$ is a symmetric right partial $H$-module algebra. Then, for every $\alpha \in C^*$,

	\begin{eqnarray*}
		\alpha(h \cdot (k \cdot c))&=&(\alpha \leftharpoonup h)(k \cdot c)\\
		&=& (\alpha \leftharpoonup h \leftharpoonup k)(c)\\
		&=&[(1_{C^*} \leftharpoonup k_1)(\alpha \leftharpoonup hk_2)](c)\\
	&=&(\varepsilon \leftharpoonup k_1)(c_1)(\alpha \leftharpoonup hk_2)(c_2)\\
		&=&\varepsilon(k_1 \cdot c_1)\alpha (hk_2 \cdot c_2).\\
	\end{eqnarray*}

Therefore, $C$ is a symmetric left partial $H$-module coalgebra.
\end{proof}

\section{Globalization for Partial Module Coalgebra}

As we have seen, it is possible, under certain conditions, to induce a partial $H$-module coalgebra from a global one. In this section, we will see the converse of this situation. To do this, we start with the following definition.

\begin{defn}[\textit{Globalization for partial module coalgebra}] \label{globalmod}
	Let $H$ be a weak Hopf algebra and $(C, \leftharpoonup)$ a right partial $H$-module coalgebra. A globalization of $C$ is a triple $(D, \theta, \pi)$, such that
	\begin{enumerate}
		\item [(i)] $D$ is a right $H$-module coalgebra via
$\blacktriangleleft: D \otimes H \rightarrow D$;
		
		\item [(ii)]$\theta: C \rightarrow D$ is a coalgebra monomorphism;
		
		\item [(iii)] $\pi:D \rightarrow D$ is a linear projection onto $\theta(C)$, satisfying for each $c\in C$, $d\in D$ and $h\in H$	
		\begin{eqnarray}	
			(\pi \otimes \pi) \varDelta(h \vartriangleright d) &=& \varDelta(\pi(h \vartriangleright d)) \label{5}\\
				\pi(\pi(d)\blacktriangleleft h) &=& \varepsilon(\pi(d_1))\pi(d_2 \blacktriangleleft h) \label{6}\\
		\theta(c \leftharpoonup h)&=& \theta(c) \leftharpoonup_i h \ = \ \pi(\theta(c)\blacktriangleleft h); \label{7}
\end{eqnarray}
		\item[(iv)]$D$ is a $H$-module generated by $\theta(C)$, that is, $D = \theta(C) \blacktriangleleft H$.
	\end{enumerate}
	
\end{defn}

\begin{obs}
	 The partial action denoted by $\leftharpoonup_i$ is the induced partial action.
\end{obs}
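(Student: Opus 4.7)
The statement to justify is the remark immediately following Definition~\ref{globalmod}, namely that the operation $\leftharpoonup_i$ defined on $\theta(C)$ by $\theta(c)\leftharpoonup_i h := \pi(\theta(c)\blacktriangleleft h)$ really is an \emph{induced partial action} in the sense of Section~\ref{induzida}. The plan is to view $\theta(C)=\pi(D)$ as a subcoalgebra of the global right $H$-module coalgebra $D$, and to check that the compatibility conditions (\ref{5}) and (\ref{6}) imposed on $\pi$ in Definition~\ref{globalmod}(iii) are precisely the right-handed counterparts of the two hypotheses on $\pi$ required by Proposition~\ref{acaoinduzida}. Once that correspondence is in place, the construction of Proposition~\ref{acaoinduzida} (transposed to the right) manufactures a partial $H$-module coalgebra structure on $\theta(C)$ whose action formula is $\pi(-\blacktriangleleft h)$, which is by definition $\leftharpoonup_i$.

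The first step is to state and verify the right-module version of Proposition~\ref{acaoinduzida}: if $D$ is a right $H$-module coalgebra via $\blacktriangleleft$ and $\pi:D\to D$ is a linear projection with subcoalgebra image $E=\pi(D)$, then the formula $e\leftharpoonup_i h := \pi(e\blacktriangleleft h)$ gives $E$ a right partial $H$-module coalgebra structure iff
\[
(\pi\otimes\pi)\Delta(e\blacktriangleleft h)=\Delta(\pi(e\blacktriangleleft h))\qquad\text{and}\qquad \pi(\pi(d)\blacktriangleleft h)=\varepsilon(\pi(d_1))\,\pi(d_2\blacktriangleleft h).
\]
The verification is the mirror of the argument in Proposition~\ref{acaoinduzida}: (PMC1) comes from $\pi$ being idempotent on $E$ together with $1_H$ acting as the identity on $D$; (PMC2) is exactly the first displayed condition applied to $e\in E$; and (PMC3) is obtained by expanding $e\leftharpoonup_i h\leftharpoonup_i k = \pi(\pi(e\blacktriangleleft h)\blacktriangleleft k)$, using associativity of $\blacktriangleleft$ and then invoking the second condition together with coassociativity.

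With that analog in hand, the second step is immediate: conditions (\ref{5}) and (\ref{6}) of Definition~\ref{globalmod}(iii) are literally those two hypotheses on $\pi$ (with $E=\theta(C)$), so $\leftharpoonup_i$ is automatically an induced partial action on $\theta(C)$. Finally, the identity (\ref{7}), $\theta(c\leftharpoonup h)=\pi(\theta(c)\blacktriangleleft h)$, says that the coalgebra isomorphism $\theta:C\to\theta(C)$ intertwines the partial action $\leftharpoonup$ on $C$ with the induced partial action $\leftharpoonup_i$ on $\theta(C)$. Transporting along $\theta$ identifies $(C,\leftharpoonup)$ with the induced partial $H$-module coalgebra structure obtained from the global one on $D$ by projecting with $\pi$, which is the content of the remark.

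The only real obstacle is formal: Proposition~\ref{acaoinduzida} is written for left partial module coalgebras, so one must carefully dualise the placement of $h_1,h_2$ and of the action symbol to obtain the right-handed version used above; once this is done the check is essentially a direct translation. No new structural content enters, so the remark is rigorously a consistency check rather than a deep theorem.
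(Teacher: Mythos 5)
Your verification is correct and is exactly the justification the paper leaves implicit: the remark is a notational clarification, and its content is that conditions (\ref{5}) and (\ref{6}) of Definition \ref{globalmod}(iii) are the right-handed counterparts of the hypotheses in Proposition \ref{acaoinduzida}, so that $\theta(c)\leftharpoonup_i h=\pi(\theta(c)\blacktriangleleft h)$ is the induced partial action on $\theta(C)$ in the sense of that proposition. Your handling of the one non-literal point --- that (\ref{6}) must be applied to $d=e\blacktriangleleft h$ rather than to $e$ itself to recover the right-handed (PMC3) --- is the right way to close the argument, and nothing further is needed.
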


The next subsections will be to establish a relation between the globalizations of partial module algebra and partial module coalgebra and in the sequel to determine under what assumptions a  partial $H$-module coalgebra has a globalization.

\subsection{Equivalence of Globalizations} In this subsection we will start with a globalization of a partial module coalgebra, as in Definition \ref{globalmod}. We will dualize this globalization by generating the globalization of a partial module algebra, as in Definition \ref{globallizacaoalgebra}. For this, suppose $(C, \leftharpoonup)$ a right partial $H$-module coalgebra and $(D, \blacktriangleleft)$ a right $H$-module coálgebra. Then, by Theorem \ref{teodual}, $C^*$ is a left partial $H$-module algebra via
\begin{eqnarray}\label{parcial}
(h \rightharpoondown \alpha)(c)=\alpha(c \leftharpoonup h)
\end{eqnarray}
and, analogously, $D^*$ a left $H$-module algebra via
\begin{eqnarray}\label{global}
(h \triangleright \beta)(d)=\beta(d \blacktriangleleft h).
\end{eqnarray}

Furthermore, we will consider a coalgebra monomorphism $\theta:C \rightarrow D$, the coalgebra $D= \theta(C)\blacktriangleleft H$ and $\pi:D \rightarrow D$ linear a projection onto $\theta(C)$ satisfying (\ref{5}). In this way, we can define the multiplicative monomorphism
\begin{eqnarray} \label{phi}
	\varphi: C^* &\rightarrow& D^*\\
	\alpha &\mapsto& \varphi(\alpha)= \alpha \circ \theta^{-1} \circ \pi. \nonumber
\end{eqnarray}

Under these conditions we obtain the following result.
\begin{teo}\label{dualglobal1}
 $(\theta(C)\blacktriangleleft H, \theta , \pi)$ is a globalization for $C$ if and only if
	$(H \triangleright \varphi(C^*), \varphi)$ is a globalization for $C^*$.
\end{teo}

\begin{proof}
	Suppose that $(\theta(C)\blacktriangleleft H, \theta , \pi)$ is a globalization for $C$. We will show that $(H \triangleright \varphi(C^*), \varphi)$ is a  globalization for $C^*$, as in Definition \ref{globallizacaoalgebra}. Indeed,
	
(i) $B=H \triangleright \varphi(C^*)$ is a $H$-module algebra via
		\begin{eqnarray*}
			\triangleright: H \otimes B &\rightarrow& B\\
			h \otimes (k \triangleright \varphi(\alpha)) &\mapsto& h \triangleright  (k \triangleright \varphi(\alpha)),	
		\end{eqnarray*}
		where
		$$[h \triangleright  (k \triangleright \varphi(\alpha))](d) = (k \triangleright \varphi(\alpha)) (d \blacktriangleleft h),$$
since $B=H \triangleright \varphi(C^*) \subseteq D^*$.
		
		\begin{itemize}
			\item $1_H \triangleright (k \triangleright \varphi(\alpha)) = (k \triangleright \varphi(\alpha))$, since, for all $d \in D$,
			\begin{eqnarray*}
				[1_H \triangleright  (k \triangleright \varphi(\alpha))](d) &=& (k \triangleright \varphi(\alpha)) (d \blacktriangleleft 1_H)\\
				&=& (k \triangleright \varphi(\alpha)) (d)\\
			\end{eqnarray*}	
			
			\item$h \triangleright l \triangleright (k \triangleright \varphi(\alpha)) = hl \triangleright (k \triangleright \varphi(\alpha))$, indeed, for all $d \in D$,
			\begin{eqnarray*}
				[h \triangleright l \triangleright  (k \triangleright \varphi(\alpha))](d) &=& [l \triangleright (k \triangleright \varphi(\alpha))] (d \blacktriangleleft h)\\
				&=& (k \triangleright \varphi(\alpha)) (d \blacktriangleleft h \blacktriangleleft l)\\
				&=& (k \triangleright \varphi(\alpha)) (d \blacktriangleleft h l)\\
				&=&[hl \triangleright (k \triangleright \varphi(\alpha))](d)
			\end{eqnarray*}	
			
			\item  $h \triangleright [(k \triangleright \varphi(\alpha)) (l \triangleright \varphi(\beta))]= [h_1 \triangleright (k \triangleright \varphi(\alpha))] [h_2 \triangleright (l \triangleright \varphi(\beta))]$, because, for all $d \in D$,
			\begin{eqnarray*}
				& \ &[h \triangleright [(k \triangleright \varphi(\alpha)) (l \triangleright \varphi(\beta))]](d)\\
				&=& [(k \triangleright \varphi(\alpha)) (l \triangleright \varphi(\beta))](d \blacktriangleleft h)\\
			&=&(k \triangleright \varphi(\alpha)) (d_1 \blacktriangleleft h_1)  (l \triangleright \varphi(\beta))(d_2 \blacktriangleleft h_2) \\
				&=&[h_1 \triangleright (k \triangleright \varphi(\alpha))] (d_1)   [h_2 \triangleright (l \triangleright \varphi(\beta))](d_2) \\
				&=&[[h_1 \triangleright (k \triangleright \varphi(\alpha))] [h_2 \triangleright (l \triangleright \varphi(\beta))]](d). \\
			\end{eqnarray*}
		\end{itemize}
	
(ii) $\varphi(C^*)$ is a right ideal of $B$. Indeed, for every $d \in D$,
		\begin{eqnarray*}
			[ \varphi(\beta)(h \vartriangleright \varphi(\alpha))](d)&=&\varphi(\beta)(d_1)(h \vartriangleright \varphi(\alpha))(d_2)\\
			&\stackrel{(\ref{global})}{=}&\varphi(\beta)(d_1)(\varphi(\alpha))(d_2 \blacktriangleleft h)\\
			&\stackrel{(\ref{phi})}{=}&\beta(\theta^{-1}(\pi(d_1)))\alpha(\theta^{-1}(\pi(d_2 \blacktriangleleft h)))\\
			&=&\beta(\theta^{-1}({\pi(d_1)}_2)) \varepsilon({\pi(d_1)}_2)\alpha(\theta^{-1}(\pi(d_2\blacktriangleleft h)))\\
			&=&\beta(\theta^{-1}(\pi(d_1))) \varepsilon(\pi(d_2))\alpha(\theta^{-1}(\pi(d_3\blacktriangleleft h)))\\
			&\stackrel{(\ref{6})}{=}&\beta(\theta^{-1}(\pi(d_1))) \alpha(\theta^{-1}(\pi(\pi (d_2)\blacktriangleleft h)))\\
			&\stackrel{(\ref{7})}{=}&\beta(\theta^{-1}(\pi(d_1))) \alpha(\theta^{-1}(\pi (d_2) \leftharpoonup_i h)))\\
			&\stackrel{(*)}{=}&\beta(\theta^{-1}(\pi(d_1))) \alpha(\theta^{-1}(\pi (d_2)) \leftharpoonup h)\\
			&\stackrel{(\ref{parcial})}{=}&\varphi(\beta)(d_1) (h \rightharpoondown \alpha)(\theta^{-1}(\pi (d_2)))\\
			&\stackrel{(\ref{phi})}{=}&\varphi(\beta)(d_1) \varphi(h \rightharpoondown \alpha)(d_2)\\
			&=&[\varphi(\beta) \varphi(h \rightharpoondown \alpha)](d)\\
			&=&[\varphi(\beta (h \rightharpoondown \alpha))](d),
		\end{eqnarray*}
 in (*) we used (\ref{7}) for $\theta^{-1}$. This is possible because if $\theta (c \leftharpoonup h) = \theta(c) \leftharpoonup_i h$, then applying $\theta^{-1}$ in both sides the of the equality  we obtain $(c \leftharpoonup h) = \theta^{-1}(\theta(c) \leftharpoonup_i h)$. Since for every element $d \in \theta(C)$, we know that there exists $c \in C$ such that $d = \theta(c)$ this implies that $\theta^{-1}(d)=c$, therefore, we obtain for each $d \in \theta(C)$
		\begin{eqnarray*}
			\theta^{-1}(d) \leftharpoonup h&=& (c \leftharpoonup h)\\
			&=&\theta^{-1}(\theta(c) \leftharpoonup_i h)\\
			&=&\theta^{-1}(d \leftharpoonup_i h).
		\end{eqnarray*}

(iii) $B$ is a subalgebra of $D^*$, since $\varphi(C^*)$ is a right ideal of $B$,
		
		$$(h \vartriangleright \varphi(\alpha))(k \vartriangleright \varphi(\beta)) = h_1 \vartriangleright \underbrace{(\varphi(\alpha)(S(h_2)k \vartriangleright \varphi(\beta))}_{\in \varphi(C^*)} \in B.$$
		
(iv) $\varphi(h \rightharpoondown \alpha) = h \rightharpoondown_i \varphi(\alpha) = \varphi(1_{C^*})(h \vartriangleright \varphi(\alpha))$. Indeed,
		\begin{eqnarray*}
			\varphi(1_{C^*})(h \vartriangleright \varphi(\alpha))&{=}&\varphi(\varepsilon_C)(h \vartriangleright \varphi(\alpha))\\
			&\stackrel{(*)}{=}&\varphi(\varepsilon \underbrace{(h \rightharpoondown \alpha)}_{\in C^*})\\
			&=&\varphi(h \rightharpoondown \alpha),
		\end{eqnarray*}
in $(*)$ we are using that $\varphi(\beta)(h \vartriangleright \varphi(\alpha))= \varphi(\beta(h \vartriangleright \varphi(\alpha)))$ what was proved when we showed that $\varphi(C^*) $ is an ideal of $B$.

This shows that $(H \vartriangleright \varphi(C^*), \varphi)$ is a globalization for $C^*$.
	
Conversely, note that the maps given by
\begin{eqnarray} \label{neh}
\pi(d)\leftharpoonup_{I} h = \pi(\pi(d) \blacktriangleleft h)
\end{eqnarray}
and
\begin{eqnarray}\label{eh}
h \rightharpoondown_{i} \varphi(\alpha) = \varphi(\varepsilon)(h \vartriangleright \varphi(\alpha)).
\end{eqnarray}
are linear. Since we will start from a globalization to a partial module algebra, the equation (\ref{eh}) defines an induced partial action for the global module algebra $D^*$. In contrast, in (\ref{neh}) we can not ensure that the map $\leftharpoonup_{I}$ is an induced partial action, so it will be seen only as a linear map. We will use this map to prove (\ref{7}).

Since $(H \triangleright \varphi(C^*), \varphi)$ is a globalization for $C^*$, then:
	\begin{eqnarray}\label{induzalg}
	\varphi(h \rightharpoondown \alpha )=h \rightharpoondown_i \varphi(\alpha )=\varphi(\varepsilon)(h \vartriangleright \varphi(\alpha)).
	\end{eqnarray}
	
Using (\ref{induzalg}) it is possible to show that $(h \rightharpoondown_i \varphi(\alpha))(\pi(d)) = \varphi(\alpha)(\pi(d)\leftharpoonup_{I} h). $ Indeed,
	\begin{eqnarray*}
		(h \rightharpoondown_i \varphi(\alpha ))(\pi(d))&\stackrel{(\ref{induzalg})}{=}&(\varphi(\varepsilon)(h \vartriangleright \varphi(\alpha)))(\pi(d))\\
		&=&\varphi(\varepsilon)({(\pi(d))}_1)(h \vartriangleright \varphi(\alpha))({(\pi(d))}_2)\\
		&=&\varphi(\varepsilon)(\pi(d_1))(h \vartriangleright \varphi(\alpha))(\pi(d_2))\\
		&\stackrel{(\ref{global})}{=}&\varphi(\varepsilon)(\pi(d_1))( \varphi(\alpha))(\pi(d_2)\blacktriangleleft h)\\
		&\stackrel{(\ref{phi})}{=}&\varepsilon(\theta^{-1}(\pi(\pi(d_1))))\alpha(\theta^{-1}(\pi(\pi(d_2)\blacktriangleleft h)))\\
		&{=}&\varepsilon(\theta^{-1}(\pi(d_1)))\alpha(\theta^{-1}(\pi(\pi(d_2)\blacktriangleleft h)))\\
		&=&\varepsilon(\pi(d_1))\alpha(\theta^{-1}(\pi(\pi(d_2)\blacktriangleleft h)))\\
		&=&\varepsilon({(\pi(d))}_1)\alpha(\theta^{-1}(\pi({(\pi(d))}_2 \blacktriangleleft h)))\\
		&=&\alpha(\theta^{-1}(\pi(\pi(d) \blacktriangleleft h)))\\
		&{=}&\alpha(\theta^{-1}(\pi(\pi(\pi(d) \blacktriangleleft h))))\\
		&\stackrel{(\ref{phi})}{=}&\varphi(\alpha)\pi(\pi(d) \blacktriangleleft h)\\
		&\stackrel{(\ref{neh})}{=}&\varphi(\alpha)(\pi(d) \leftharpoonup_{I} h).
	\end{eqnarray*}

	Then, we obtain the identity
	\begin{eqnarray}\label{comeco}
	(h \rightharpoondown_i \varphi(\alpha))(\pi(d)) = \varphi(\alpha)(\pi(d)\leftharpoonup_{I} h).
	\end{eqnarray}
	
	Thus, we are able to show that  $(\theta(C)\blacktriangleleft H, \theta , \pi)$ is a globalization for $C$. Note that $D$ is a right global $H$-module coalgebra via $\vartriangleleft$, given by:
	$$(\theta(c)\blacktriangleleft h) \vartriangleleft k = (\theta(c)\blacktriangleleft h) \blacktriangleleft k = \theta(c)\blacktriangleleft h  k.$$
	Besides that,
	
(i)$\pi(\pi(d)\blacktriangleleft h) = \pi(d_2 \blacktriangleleft h)\varepsilon(\pi(d_1))$, since applying $\alpha \circ \theta^{-1}$ in $\pi(\pi(d)\blacktriangleleft h)$ we obtain:
		\begin{eqnarray*}
			\alpha ( \theta^{-1}(\pi(\pi(d)\blacktriangleleft h)))&{=}& \alpha(\theta^{-1}(\pi(\pi(\pi(d)\blacktriangleleft h))))\\
			&\stackrel{(\ref{neh})}{=}&\alpha(\theta^{-1}(\pi(\pi(d) \leftharpoonup_{I} h)))\\
			&\stackrel{(\ref{phi})}{=}&\varphi(\alpha)(\pi(d) \leftharpoonup_{I} h)\\
			&\stackrel{(\ref{comeco})}{=}&(h \rightharpoondown_i \varphi(\alpha))(\pi(d))\\
			&\stackrel{(\ref{induzalg})}{=}&\varphi(h \rightharpoondown \alpha )(\pi(d))\\
			&\stackrel{(\ref{phi})}{=}&(h \rightharpoondown \alpha)(\theta^{-1}(\pi(\pi(d))))\\
			&{=}&(h \rightharpoondown \alpha)(\theta^{-1}(\pi(d)))\\
			&\stackrel{(\ref{phi})}{=}&\varphi(h \rightharpoondown \alpha )(d)\\
			&\stackrel{(\ref{induzalg})}{=}&(\varphi(\varepsilon)(h \vartriangleright \varphi(\alpha)))(d)\\
			&{=}&(\varphi(\varepsilon)(d_1)(h \vartriangleright \varphi(\alpha)))(d_2)\\
			&\stackrel{(\ref{phi})}{=}&\varepsilon(\theta^{-1}(\pi(d_1)))(h \vartriangleright \varphi(\alpha)))(d_2)\\
			&\stackrel{(\ref{global})}{=}&\varepsilon(\theta^{-1}(\pi(d_1)))(\varphi(\alpha))(d_2 \blacktriangleleft h)\\
			&=&\varepsilon(\pi(d_1))\alpha(\theta^{-1}(\pi(d_2 \blacktriangleleft h))).\\
		\end{eqnarray*}

	Now we note that since the equality above holds for all $\alpha \in C^*$, we obtain
		$$\theta^{-1}(\pi(\pi(d)\blacktriangleleft h)) = \varepsilon(\pi(d_1))\theta^{-1}(\pi(d_2 \blacktriangleleft h))$$
		and applying $\theta$ in both sides of this equality we have		$$\pi(\pi(d)\blacktriangleleft h) = \varepsilon(\pi(d_1))\pi(d_2 \blacktriangleleft h),$$
because, since $Im \pi = \theta(C) = Im \theta = Dom \theta^{-1} $, then, $\theta \circ \theta^{-1}= Id_{|_{Dom \theta^{-1}}}=Id_{|_{Im \pi}}.$

(ii) $\theta(c \leftharpoonup h)= \theta(c) \leftharpoonup_{i} h$, because applying $\alpha \circ \theta^{-1}$ in the right side we obtain,
		\begin{eqnarray*}
			(\alpha \circ \theta^{-1})(\theta(c) \leftharpoonup_{i} h)&\stackrel{(\ref{neh})}{=}&(\alpha \circ \theta^{-1})(\pi (\theta(c) \blacktriangleleft  h))\\
			&\stackrel{(\ref{phi})}{=}& \varphi (\alpha)(\theta(c) \blacktriangleleft  h)\\
			&=&(h \vartriangleright \varphi (\alpha))(\theta(c))\\
			&=&(h \vartriangleright \varphi (\alpha))(\theta(c_2))\varepsilon(\theta^{-1}(\theta(c_1)))\\
			&{=}&(h \vartriangleright \varphi (\alpha))(\theta(c_2))\varepsilon(\theta^{-1}(\pi (\theta(c_1))))\\
			&\stackrel{(\ref{phi})}{=}&(h \vartriangleright \varphi (\alpha))(\theta(c_2)) \varphi (\varepsilon) (\theta(c_1))\\
			&{=}&\varphi (\varepsilon) ({\theta(c)}_1) (h \vartriangleright \varphi (\alpha))({\theta(c)}_2) \\
			&{=}&(\varphi (\varepsilon)  (h \vartriangleright \varphi (\alpha)))({\theta(c)}) \\
			&\stackrel{(\ref{eh})}{=}& (h \rightharpoondown_{i} \varphi(\alpha)) ({\theta(c)}) \\
			&{=}& \varphi(h \rightharpoondown \alpha) ({\theta(c)}) \\
			&\stackrel{(\ref{phi})}{=}& (h \rightharpoondown \alpha) ( \theta^{-1} (\pi ({\theta(c)}))) \\
			&{=}& (h \rightharpoondown \alpha) ( \theta^{-1} ({\theta(c)})) \\
			&=& (h \rightharpoondown \alpha) (c) \\
			&\stackrel{(\ref{parcial})}{=}& \alpha(c \leftharpoonup h) \\
			&=& \alpha(\theta^{-1}(\theta (c \leftharpoonup h))),
		\end{eqnarray*}
Besides that, it is necessary call the attention that the first equality holds by the fact that we already shown that the linear map $\leftharpoonup_{i}$ is the induced action.
		
In this way, $(\alpha \circ \theta^{-1})(\theta(c) \leftharpoonup_{i} h) =  \alpha(\theta^{-1}(\theta (c \leftharpoonup h)))$, for all $\alpha \in C^{*}$, what implies that $(\theta(c) \leftharpoonup_{i} h) =  (\theta (c \leftharpoonup h))$.

\end{proof}

\subsection{Constructing a Globalization}
\quad
Consider $C$ a right partial $H$-module coalgebra. In \cite{Glauberglobalizations}, it was constructed the standart globalization for $C$, with $H$ a Hopf algebra, via the maps
\begin{eqnarray*}
	\theta: C &\rightarrow& C \otimes H \ \ \ \mbox{ and }  \ \ \ 	\pi: C \otimes H \rightarrow C \otimes H\\
	c &\mapsto& c \otimes 1_H \ \ \ \ \ \ \ \ \ \ \ \ \ \ \ \ \ \ \ 	c \otimes h \mapsto c \leftharpoonup h \otimes 1_H
\end{eqnarray*}
where $\theta$ is a coalgebra monomorphism and $\pi$ is a comultiplicative projection onto $\theta(C)$. However, for the case of weak Hopf algebra, since the element $1_H$ does not satisfies $\Delta(1_H) = 1_H \otimes 1_H$ the maps $\theta$ and $\pi$ do not satisfied such properties. Besides that, $\theta(c)=c \otimes h$ is a coalgebra monomorphsm if and only if $\Delta(h)=h \otimes h$. Therefore, it is natural consider an additional hypothesis for this weak Hopf algebra, as we can see in the following result.

\begin{teo}[Globalization Theorem]\label{globalizacao}
	Suppose that $C$ is a right partial $H$-module coalgebra such that there exists an element $e \in H$ that satisfies
	\begin{eqnarray}
	\Delta(e) = e \otimes e
	\end{eqnarray}
	\begin{eqnarray}\label{issoeum}
	c \leftharpoonup he= c \leftharpoonup h,
	\end{eqnarray}
	for all $c \in C$ and $h \in H$. Then, $C$ has a globalization.
\end{teo}

\begin{proof}
First of all, note that (\ref{issoeum}) implies that $c = c \leftharpoonup e$, for all $c \in C$. Moreover, for all $c \in C$ and $h \in H$,
\begin{eqnarray*}
c \leftharpoonup h &=& c \leftharpoonup e \leftharpoonup h\\
&{=}& \varepsilon(c_1 \leftharpoonup e) c_2 \leftharpoonup eh\\
&{=}& \varepsilon(c_1) c_2 \leftharpoonup eh\\
&{=}& c \leftharpoonup eh.\\
\end{eqnarray*}
Define
	\begin{eqnarray*}
		\theta: C &\rightarrow& C \otimes eH\\
		c &\mapsto& c \otimes e.
	\end{eqnarray*}
Note that $\theta$ is injective because $\Delta(e) = e \otimes e$, then, $\varepsilon(e)= 1_{\Bbbk}$. Moreover, $\theta$ is a coalgebra homomorphism, because $\varepsilon(\theta(c))= \varepsilon(c \otimes e) = \varepsilon(c) \varepsilon(e)= \varepsilon(c)$, and
	\begin{eqnarray*}
		(\theta \otimes \theta)(\Delta(c))&=&\theta(c_1) \otimes \theta (c_2)\\
		&=&c_1 \otimes e \otimes c_2 \otimes e\\
		&=& \Delta(\theta (c)).
	\end{eqnarray*}
	
We define
	\begin{eqnarray*}
		\pi: C \otimes eH &\rightarrow& C \otimes eH\\
		c \otimes eh &\mapsto& c \leftharpoonup eh \otimes e,
	\end{eqnarray*}
then $\pi$ is a comultiplicative projection onto $\theta(C)$. Indeed,
	
	\begin{itemize}
		\item $(\pi \otimes \pi)(\Delta(c \otimes eh)) = \Delta (\pi (c \otimes eh))$, since
		\begin{eqnarray*}
			(\pi \otimes \pi)(\Delta(c \otimes eh))&=&(\pi \otimes \pi)((c \otimes eh)_1 \otimes (c \otimes eh)_2)\\
			&=&\pi(c_1 \leftharpoonup eh_1) \otimes \pi (c_2 \leftharpoonup eh_2)\\
			&=& c_1 \leftharpoonup eh_1 \otimes e \otimes c_2 \leftharpoonup eh_2 \otimes e\\
			&=&\Delta(c \leftharpoonup eh \otimes e)\\
			&=&\Delta (\pi (c \otimes eh)).
		\end{eqnarray*}	
		
		\item $\pi(\pi(c \otimes eh)) = \pi(c \otimes eh)$, because
		\begin{eqnarray*}
			\pi(\pi(c \otimes eh))&=& (c \leftharpoonup eh ) \leftharpoonup e \otimes e\\
			&=& \varepsilon(c_1 \leftharpoonup eh_1)(c_2 \leftharpoonup eh_2 e) \otimes e\\
			&\stackrel{(\ref{issoeum})}{=}&\varepsilon(c_1 \leftharpoonup eh_1)(c_2 \leftharpoonup eh_2 ) \otimes e\\
			&=&c \leftharpoonup eh \otimes e\\
			&=&\pi(c \otimes eh).
		\end{eqnarray*}
		
		\item $\pi(\theta(c))= \theta(c)$, since $\theta (c) = c \otimes e = c \leftharpoonup 1_H \otimes e\stackrel{(\ref{issoeum})}{=} c \leftharpoonup e1_H \otimes e  = \pi(c \otimes e1_H)$, then:
		\begin{eqnarray*}
			\pi(\theta(c)) = \pi(\pi(c \otimes e1_H)) = \pi(c \otimes e1_H)= \theta(c).
		\end{eqnarray*}
		
		\item $\pi(\pi(c \otimes eh) \blacktriangleleft k) = \varepsilon(\pi(c_1 \otimes eh_1))\pi((c_2 \otimes eh_2) \blacktriangleleft k)$, where $C \otimes eH$ is seen as a right $H$-module coalgebra via the product on the last factor. It is enough to note that
		\begin{eqnarray*}
			\pi(\pi(c \otimes eh) \blacktriangleleft k)
			&=& \pi((c \leftharpoonup eh \otimes e) \blacktriangleleft k)\\
			&=& \pi(c \leftharpoonup eh \otimes ek) \\
			&=&c \leftharpoonup eh \leftharpoonup ek  \otimes e\\
			&\stackrel{(\ref{issoeum})}{=}&c \leftharpoonup eh \leftharpoonup k  \otimes e\\
			&=& \varepsilon(c_1 \leftharpoonup eh_1)(c_2 \leftharpoonup eh_2 k)  \otimes e\\
			&{=}&\varepsilon(c_1 \leftharpoonup eh_1 \otimes e)(c_2 \leftharpoonup eh_2 k)  \otimes e\\
			&=&\varepsilon(\pi(c_1 \otimes eh_1)) \pi (c_2 \otimes eh_2 k)\\
			&=& \varepsilon(\pi(c_1 \otimes eh_1))\pi((c_2 \otimes eh_2)\blacktriangleleft k).
		\end{eqnarray*}
		
		\item $\theta (c) \leftharpoonup_i h = \theta (c \leftharpoonup h) $, because
		\begin{eqnarray*}
			\theta (c) \leftharpoonup_i h &=& \pi(\theta (c) \blacktriangleleft h)\\
			&=&\pi((c \otimes e) \blacktriangleleft h)\\
			&=&\pi(c \otimes eh)\\
			&=&c \leftharpoonup eh \otimes e\\
			&\stackrel{(\ref{issoeum})}{=}&c \leftharpoonup h \otimes e\\
			&=& \theta (c \leftharpoonup h).
		\end{eqnarray*}
		
	\end{itemize}
Therefore, $\theta(C)\blacktriangleleft H = C \otimes eH$ is a globalization for $C$ as a partial $H$-module coalgebra.
\end{proof}

\begin{ex}

(i) Consider $\Bbbk \mathcal{G}$ with $\mathcal{G}$ a groupoid given by the  disjoint union of the groups $G_1$ e $G_2$. Consider the linear map
		$$ \lambda(\delta_g) =\left \{ \begin{array}{rl}  1_{\Bbbk}, \ if \ g = e_1 \\
		0, if \ g \neq e_1,
		\end{array}
		\right. $$
		where $e_1$ denotes the identity of $G_1$. In this way, it is enough to take $\delta_{e_1}$ and the partial action on a coalgebra $C$ given by $c \leftharpoonup \delta_g = c \lambda (\delta_g)$, for every $c \in C$ . Since $\lambda(\delta_g) = \lambda(\delta_{e_1} \delta_g) = \lambda(\delta_g \delta_{e_1})$, it follows that this partial action is globalizable.
(ii) Consider $\Bbbk \mathcal{G}$ a groupoid algebra and fix $e$ an element of $\mathcal{G}_0$. Then,  
	$$ \lambda(\delta_g) =\left \{ \begin{array}{rl}  1_{\Bbbk}, \ if \ g \in \mathcal{G}_e\\
0, if \ g \notin \mathcal{G}_e,
\end{array}
\right. $$
defines a partial module coalgebra of $\Bbbk \mathcal{G}$ on a coalgebra $C$, as in Example \ref{exlambda}(ii). Since $\lambda(\delta_g) = \lambda(\delta_{e} \delta_g) = \lambda(\delta_g \delta_{e})$, it follows that this partial action is globalizable.
\end{ex}

\section{Acknowledgments}
The authors would like to thank to Glauber Quadros and Felipe Castro for their solicitude all the times they have been willing to discuss about partial action theory. Also to Eliezer Batista for the suggestions to improve this work. And finally to Antonio Paques who kindly reviewed this article giving a fundamental contribution to its finalization.

\addcontentsline{toc}{chapter}{Referências Bibliográficas}

\end{document}